
\documentclass[11pt,leqno]{article}

\usepackage{amsthm,amsfonts,amssymb,amsmath,color}
\usepackage{amsbsy}
\usepackage{showkeys}
\hyphenpenalty=5000
\tolerance=1000


\numberwithin{equation}{section}


\newcommand\R{\mathbb R}

\def\de{\mathfrak{z}}

\def\O{\Omega}

\def\l{\lambda}

\def\epsilon{\varepsilon}
\def\e{\varepsilon}



\newcommand\br{\begin{rem}}
\newcommand\er{\end{rem}}
\newcommand\bp{\begin{pmatrix}}
\newcommand\ep{\end{pmatrix}}
\newcommand\be{\begin{equation}}
\newcommand\ee{\end{equation}}
\newcommand\ba{\begin{equation}\begin{aligned}}
\newcommand\ea{\end{aligned}\end{equation}}

\newcommand{\pder}[2] {\frac{\partial #1}{\partial #2}}


\setlength\topmargin{0pt}
\addtolength\topmargin{-\headheight}
\addtolength\topmargin{-\headsep}
\setlength{\evensidemargin}{0pt}
\setlength{\oddsidemargin}{0pt}
\setlength\textwidth{\paperwidth}
\addtolength\textwidth{-2in}
\setlength\textheight{\paperheight}
\addtolength\textheight{-2in}
\usepackage{layout}


\renewcommand{\R}{{\mathbb R}}

\newcommand{\TT}{{\mathbb T}}

\newcommand{\II}{{\mathbb I}}

\newcommand{\SSS}{{\mathbb S}}
\newcommand{\tn}[1]{{\mathbb #1}}

\newcommand{\tvr}{\widetilde \varrho}
\newcommand{\tvu}{\widetilde \vu}
\newcommand{\tnT}{\widetilde{\tn{T}}}
\newcommand{\teta}{\widetilde \eta}

\newcommand{\ff}{{\mathbf f}}

\newcommand{\vr}{\varrho}

\newcommand{\vu}{\vc{u}}

\newcommand{\vcg}[1]{{\boldsymbol #1}}
\newcommand{\vc}[1]{{\bf #1}}
\newcommand{\Div}{{\rm div}\,}

\newcommand{\dx}{\,{\rm d} {x}}
\newcommand{\dt}{\,{\rm d} t }
\newcommand{\dtau}{\,{\rm d} \tau }







\newtheorem{definition}{Definition}[section]
\newtheorem{theorem}[definition]{Theorem}

\newtheorem{lemma}[definition]{Lemma}

\newtheorem{remark}[definition]{Remark}



\begin{document}

\title{Weak solutions and weak-strong uniqueness for a compressible power-law-Oldroyd--B fluid model}

\author{Yong Lu \footnote{Department of Mathematics, Nanjing University, 210093 Nanjing, China. Email: {\tt luyong@nju.edu.cn}.} \and  Milan Pokorny\footnote{Mathematical Institute of Charles University, Faculty of Mathematics and Physics, Charles University, Sokolovsk\'a 86, 186 75,   Prague. Email: {\tt pokorny@karlin.mff.cuni.cz}.}  \thanks{}}

\date{}

\maketitle

\begin{abstract}
We consider a model of a viscoelastic compressible flow which is additionally shear thickening (the stress tensor corresponds to the power law model, however, the divergence of the velocity is due to the model bounded). We prove existence of a weak solution to this model provided the growth in the power law model is larger or equal than $\frac 52$. We also prove that any sufficiently smooth solution of this model is unique in the class of weak solution, provided extra integrability of the initial value for the extra stress tensor is assumed. However, we are not able to prove in general existence of such a strong solution, even not locally in time. 

\end{abstract}

{\bf Keywords:} Compressible Oldroyd--B model; stress diffusion; weak solutions; power-law fluid; weak-strong uniqueness




\section{Introduction}
\label{s_1}

The visco-elastic properties of fluids are an important phenomenon which can be observed for many fluids, including also certain regimes for the blood flow. One of prominent examples are the dilute polymeric fluids. While in the past, the major attention was oriented towards incompressible fluids (\cite{Renardy90, F-G-O02}), in the last decade more and more attention is also paid to compressible visco-elastic fluids. It is connected with better understanding of the mathematical fluid mechanics for compressible fluids as well as with the more and more complex fluids used in applications, where often the compressibility of the fluid simply cannot be neglected.

One of the prominent models for modelling incompressible visco-elastic fluids is the Oldroyd-B model which is widely used to describe the flow of dilute polymeric fluids. From the incompressible Navier--Stokes--Fokker--Planck system which is a micro-macro model  one can derive (formally) the incompressible Oldroyd--B model in dumbbell hookean setting, see \cite{Bris-Lelievre}. For the compressible fluids, a similar derivation was performed in \cite{Barrett-Lu-Suli}, where a compressible Oldroyd--B model was derived as a macroscopic closure of the compressible Navier--Stokes--Fokker--Planck equations; the latter was intensively studied in a series of papers by Barrett and S\"uli \cite{Barrett-Suli2, Barrett-Suli4, Barrett-Suli1, Barrett-Suli, BS2016}.

The derived model in \cite{Barrett-Lu-Suli} is described as follows. Let $\Omega \subset \mathbb{R}^3$ be a bounded open domain with a smooth boundary. We denote $Q_T:=(0,T)\times \O$ and have
\begin{alignat}{2}
\label{01a}
\pder{\vr}{t} + \Div (\vr \vu) &= 0,
\\
\label{02a}
\pder{(\vr\vu)}{t}+ \Div (\vr \vu \otimes \vu) +\nabla p(\vr)  - \Div \SSS(\nabla \vu) &=\Div \big(\TT - (kL\eta + \de\, \eta^2)\,\II\, \big)  +  \vr\, \ff,
\\
\label{03a}
\pder{\eta}{t} + \Div (\eta \vu) &= \e \Delta \eta,
\\
\label{04a}
\pder{\TT}{t} + {\rm Div} (\vu\,\TT) - \left(\nabla \vu \,\TT + \TT\, \nabla^{\rm T} \vu \right) &= \e \Delta \TT + \frac{k}{2\lambda}\eta  \,\II - \frac{1}{2\lambda} \TT,
\end{alignat}
where $\SSS(\nabla \vu)$ is the {\em Newtonian stress tensor} defined by
\be\label{Newtonian-tensor}
\SSS(\nabla \vu) = \mu^S \left( \frac{\nabla \vu + \nabla^{\rm T} \vu}{2} - \frac{1}{3} (\Div \vu) \II \right) + \mu^B (\Div \vu) \II,
\ee
and $\{{\rm Div}\, (\vu \tn{T})\}_{ij} = \sum_{k=1}^3 \pder{(u_kT_{ij})}{x_k}$.
Above, the constants $\mu^S>0$ and $\mu^B\geq 0$ are the shear and bulk viscosity coefficients, respectively. The pressure $p$ is a function of the density $\vr$ and in \cite{Barrett-Lu-Suli} $p(\vr)$  was assumed in the form
\be\label{pressure_old}
 p(\vr)=a \vr^\gamma, \quad a>0, \ \gamma \geq 1.
\ee

The extra stress tensor $\TT = T_{ij}$, $1\leq i,j \leq 3$ is a positive definite symmetric matrix defined on $(0,T)\times \O$.
%
%

The polymer number density $\eta$ is a nonnegative scalar function defined as the integral of the probability density function $\psi$, governed by the Fokker--Planck equation, for the conformation vector, which is a microscopic variable in the modeling of dilute polymer chains. The term $q(\eta):= kL \eta + \de \eta^2$ in the momentum equation \eqref{02a} can be seen as the \emph{polymer pressure}, compared to the fluid pressure $p(\vr)$.

The meanings of the various quantities and parameters appearing in \eqref{01a}--\eqref{04a} were introduced in the derivation of the model in \cite{Barrett-Lu-Suli}.  In particular, the parameters $\e$, $k$, $\l$, $\de$, $L$ are non-negative. The problem is accompanied with suitable boundary conditions (homogeneous Dirichlet condition for the velocity and homogeneous Neumann condition for the extra stress and the polymer density $\eta$) and initial conditions for the density, linear momentum, extra stress tensor and the polymer density. 

In the same paper, the existence of weak solutions for the two space dimensional problem was shown. One of the main points of the proof was the fact that in the equation for the extra stress tensor, the extra stress diffusion term is present. Another important point was the fact that by suitable approximation, it is possible to verify that if the extra stress tensor is initially symmetric and positive definite, it remains so for all times. These points lead to suitable estimates and using the fact that the problem is studied in two space dimensions, the nonlinear terms of the type $\nabla \vu \TT$ can be controlled by the Gronwall argument.

The other known results are small data ones. The existence and uniqueness of local strong solutions and the existence of global solutions near an equilibrium for  three-dimensional compressible visco-elastic fluids was considered in \cite{Qian-Zhang, Hu-Wang3, Hu-Wu, Lei}. In particular, Fang and Zi \cite{Fang-Zi} not only proved the existence of a unique local-in-time strong solution, but also established a blow-up criterion for strong solutions. Later, in \cite{Lu-Z.Zhang18}, one of the authors and his collaborator showed the weak-strong uniqueness, presented a refined blow-up criterion and proved a conditional regularity result in two dimensional setting. 

Inspired by the work of Lions and Masmoudi \cite{LM00}, considering the corotational derivative setting and assuming the extra stress tensor to be a scalar matrix, it is possible to obtain a simplified model which takes a similar form as the multi-component compressible Navier-Stokes equations. Based on the approach from \cite{VWY19, Novotny-Pokorny-19} it is possible to show the well posedness of the problem in the weak setting even if the extra stress tensor diffusion is omitted, see \cite{LuPo1}. A similar problem in the setting of strong solutions (i.e. data close to an equilibrium state), was considered in \cite{Liu-Lu-Wen-21}.

Before coming to our problem, let us recall that recently, several new results for the incompressible visco-elastic fluid models were achieved, see e.g. \cite{Bathory-Bulicek-Malek-21}, \cite{Bathory-Bulicek-Malek-24} or \cite{Bulicek-Malek-Rodriguez-22}, but in three space dimensions, the global-in-time well-posedness of the Oldroyd-B model for large data is still an open questions and certain regularizations (however, physically justified) need to be included.

The aim of this paper is to consider the Oldroyd-B compressible model in three space dimensions. Indeed, we cannot do it directly; we partially follow an interesting attempt from paper \cite{Kreml-Pokorny-Salom-15} where in the incompressible setting, the authors considered combination of power-law stress tensor and nonlinear ($q$-Laplacian) extra stress tensor diffusion to get weak well-posedness of the problem for large data. Since no extra estimates coming from other terms or from the positive definiteness of the extra stress tensor were considered, the extra stress tensor diffusion was always superlinear (the power $q>2$). In this paper we improve the situation and consider only linear extra stress diffusion which is possible to justify physically. 

On the other hand, there is a serious problem with compressible power-law fluid model and the well-posedness (in the weak setting for large data and large time interval) is known only in case of exponential growth of the stress tensor (see \cite{Mamontov1, Mamontov2, Vodak}). The polynomial growth was considered in \cite{Zhikov-Pastuchova}, however, the proof contains a serious gap (at certain moment the author used the strong convergence of the velocity sequence which is not clear unless the density is strictly positive; this, however, is not evident how to show). Therefore, inspired by the considerations in \cite{Feireisl-Liu-Malek} we adopt their choice of the stress tensor which directly implies bounded velocity divergence and thus improves the properties of the solutions and removes the gap in the proof from \cite{Zhikov-Pastuchova}.

To conclude the introductory part, let us formulate our problem we plan to study. We consider system \eqref{01a}--\eqref{04a}, however, in a different setting than in \cite{Barrett-Lu-Suli}. The viscous stress tensor $\tn{S}$ is assumed in the form
\begin{equation} \label{viscous_stress}
\tn{S}(\nabla \vu) = 2\mu_0 (1+ |\tn{D}^d(\vu)|^2)^{\frac {r-2}{2}} \tn{D}^d(\vu) + \beta (\Div\vu) \tn{I} 
\end{equation}  
with $\tn{D}^d(\vu)$ the deviatoric part of the velocity gradient,
\begin{equation} \label{deviatoric}
\tn{D}^d(\vu) = \tn{D} (\vu) -\frac 13 \Div \vu \tn{I} = \frac 12 \Big(\nabla \vu + \nabla \vu^T -\frac 23 \Div \vu \tn{I} \Big),
\end{equation}
the exponent $r\geq \frac 52$, $\mu_0 >0$ and the function $\beta$: $(-\frac 1b,\frac 1b)\to [0,\infty)$ such that there exists a convex potential $\Lambda$: $\tn{R} \to [0,\infty]$ with the following properties
\begin{equation} \label{potential}
\begin{aligned}
\Lambda(0)=0, & \qquad \Lambda'(z) = z \beta(z) \\
\Lambda(z) \to +\infty \text{ if } z\to \pm \frac 1b, & \qquad \Lambda(z) = +\infty \text{ if } |z|\geq \frac 1b.
\end{aligned}
\end{equation}
The pressure $p$ is defined as
\begin{equation} \label{pressure}
p(\vr) = \vr^2 \psi'(\vr),
\end{equation}
where $p\in C([0,\infty)) \cap C^2((0,\infty))$, $p(0)=0$, $p'(\vr)>0$ for $\vr>0$ (thus the Helmholtz free energy $\psi \in C^3(0,\infty)$, $2\psi' + \vr \psi'' >0$ for $\vr>0$ and $\lim_{\vr\to 0^+} \vr^2\psi'(\vr) =0$).

Finally, the coefficients $k$, $\zeta$, $L$, $\lambda$ and $\varepsilon$ are positive. Since their physical meaning is not important in our paper, we refer to \cite{Barrett-Lu-Suli} for their physical interpretation and further details.
 
The system is accompanied with the initial conditions
\begin{equation} \label{initial}
\begin{aligned}
\vr(0,x) = \vr_0(x),  \quad (\vr\vu)(0,x) = \vc{m}_0(x)=(\vr_0 \vu_0)(x), \quad 
\eta(0,x) =\eta_0(x),  \quad \tn{T}(0,x) = \tn{T}_0(x),
\end{aligned}
\end{equation}
and the boundary conditions at $(0,T)\times \partial \Omega$
\begin{equation} \label{boundary}
\vu = \vc{0}, \qquad \pder{\tn{T}}{\vc{n}} = \tn{O}, \qquad \pder{\eta}{\vc{n}} =0,
\end{equation}
where $\vc{n}$ is the outer normal vector to $\partial \Omega$. 

Note that we could replace the homogeneous Dirichlet boundary conditions for the velocity by the Navier ones (with no inflow/outflow) or consider the space-periodic boundary problem and we would get the same results as presented below.

In what follows, we first introduce the weak formulation of our problem and state the main result. Section \ref{s_3} contains a priori estimates to justify the choice of the exponent $r \geq \frac 52$. Section \ref{s_4} will be devoted to the construction of weak solutions to our problem and the last Section \ref{s_5} will present the proof of the second main result, the weak-strong uniqueness property.

Below, we use standard notation for the Lebesgue spaces $L^p(\Omega)$, Sobolev spaces $W^{1,q}(\Omega)$ and Bochner spaces $L^r(0,T;X)$ with $X$ a Banach space, endowed with the standard norms. The positive constants are denoted by $C$ and their value may change even in the same formula or on the same line.

\section{Weak solution, main results}
\label{s_2}

We first introduce the weak formulation of our problem and then state the main results of this paper.

\subsection{Weak formulation}
\label{s_2.1}

We consider the so-called renormalized finite energy weak solutions which are weak solutions satisfying the renormalized continuity equation (which is for free in our setting) and the energy inequality in the integral form.

\begin{definition} \label{weak_solution}
We say that the quadruple $(\vr,\vu,\eta,\tn{T})$ is a weak solution to our problem \eqref{01a}--\eqref{04a}, \eqref{viscous_stress}--\eqref{boundary} provided
\begin{equation} \label{integrability}
\begin{aligned}
\vr \in & L^\infty((0,T)\times \Omega) \cap C([0,T]; L^1(\Omega)), \qquad \vr >0 \quad \text{ a.e. in } (0,T)\times \Omega \\
\vu \in & L^\infty(0,T;L^2(\Omega;\R^3)) \cap L^r(0,T; W^{1,r}_0(\Omega;\R^3)), \qquad \vr\vu \in C([0,T];L^1(\Omega;\R^3)) \\
\Div \vu &\in \Big(-\frac 1b,\frac 1b\Big) \quad \text{ a.e. in } (0,T)\times \Omega \\  
\eta \in & L^\infty(0,T;L^2(\Omega)) \cap L^2(0,T;W^{1,2}(\Omega)) \cap C([0,T];L^1(\Omega)) \\
\tn{T} \in & L^\infty(0,T;L^2(\Omega;\R^{3\times 3})) \cap L^2(0,T;W^{1,2}(\Omega;\R^{3\times 3})) \cap C([0,T];L^1(\Omega;\R^{3\times 3})), 
\end{aligned}
\end{equation}
the tensor $\tn{T}$ is symmetric and positive definite, and the following identities hold
\begin{equation} \label{weak_continuity}
\int_0^t \int_{\R^3} \Big(\vr \pder{\psi}{t} + \vr\vu\cdot \nabla \psi\Big) \dx\dtau = \int_{\R^3} (\vr \psi)(t,x)\dx - \int_{\R^3}\vr_0(x) \psi (0,x) \dx 
\end{equation}
for any $t \in (0,T]$, for any $\psi \in C^\infty_0([0,T]\times \R^3)$, where $\vu$ and $\vr$ are extended by zero outside of $\Omega$;
\begin{equation} \label{weak_momentum}
\begin{aligned} 
&\int_0^t \int_\Omega \big(\Lambda (\Div \vcg{\varphi})-\Lambda(\Div \vu)\big) \dx \dtau \geq \frac 12 \int_\Omega \big((\vr|\vu|^2) (t,x) -(\vr_0 |\vu_0|^2)(x) \big)\dx \\
&-\int_\Omega (\vr\vu\cdot \vcg{\varphi})(t,x) \dx + \int_\Omega \vc{m}_0(x) \cdot\vcg{\varphi}(0,x)\dx + \int_0^t \int_\Omega \Big(\vr \vu \cdot \pder{\vcg{\varphi}}{t} + \vr(\vu\otimes \vu):\nabla \vcg{\varphi} \Big)\dx\dtau \\
&+ \int_0^t \int_{\Omega} \big(2\mu_0 (1+ |\tn{D}^d(\vu)|^2)^{\frac{r-2}{2}} : \tn{D}^d(\vu-\vcg{\varphi}) + p(\vr) \Div (\vcg{\varphi} -\vu)\big) \dx\dtau \\
&+ \int_0^t \int_\Omega  \big(\vr\vc{f} \cdot (\vcg{\varphi} - \vu) - \tn{T}: \nabla (\vcg{\varphi}-\vu) + (kL\eta + \zeta\eta^2) \Div (\vcg{\varphi}-\vu)\big)\dx\dtau  
\end{aligned}
\end{equation}
for a.e. $t\in (0,T]$ and for any $\vcg{\varphi} \in C^\infty_0([0,T]\times \Omega;\R^3)$;
\begin{equation} \label{weak_eta} 
\int_0^t \int_\Omega \Big(\eta \pder{\psi}{t} + \eta \vu\cdot \nabla \psi -\varepsilon \nabla \eta \cdot \nabla \psi\Big) \dx\dtau = \int_\Omega \big((\eta \psi)(t,x) -\eta_0(x)\psi(0,x)\big)\dx
\end{equation}
for any $t\in (0,T]$ and for any $\psi \in C^\infty([0,T]\times \overline{\Omega})$;
\begin{equation} \label{weak_extra_stress}
\begin{aligned}
&\int_0^t \int_\Omega \Big(\tn{T}:\pder{\tn{M}}{t} + \vu \tn{T}:: \nabla \tn{M} \Big)\dx\dtau + \int_0^t \int_\Omega \big(\nabla \vu \tn{T} + \tn{T}(\nabla \vu)^T\big) :\tn{M}\dx\dtau \\
&- \int_0^t \int_\Omega \Big(\varepsilon \nabla \tn{T}::\nabla \tn{M} +\frac 1{2\lambda} \tn{T}:\tn{M} \Big)\dx\dtau + \frac{k}{2\lambda}\int_0^t \int_\Omega \eta {\rm Tr}\, \tn{M} \dx\dtau \\
&= \int_\Omega \big((\tn{T}:\tn{M}) (t,x) - \tn{T}_0 (x) :\tn{M}(0,x)\big)\dx
\end{aligned}
\end{equation}
for any $t\in (0,T]$ and for any $\tn{M} \in C^\infty([0,T]\times \overline{\Omega};\R^{3\times 3})$, the operator ${\rm Tr}\,\tn{M} = \sum_{i=1}^3 M_{ii}$ is the trace operator in the matrix sense and $\vu \tn{T}:: \nabla \tn{M} = \sum_{k=1}^3 u_i T_{jk}\pder{M_{jk}}{x_i}$.

If, moreover, the renormalized continuity equation 
\begin{equation} \label{renor_cont}
\begin{aligned}
&\int_0^t \int_{\R^3} \Big(b(\vr) \pder{\psi}{t} + b(\vr)\vu\cdot \nabla \psi\Big) \dx\dtau + \int_0^t\int_\Omega (b(\vr)-b'(\vr)\vr) \Div \vu \dx\dtau \\ 
&= \int_{\R^3} (b(\vr) \psi)(t,x)\dx - \int_{\R^3}b(\vr_0)(x) \psi (0,x) \dx 
\end{aligned}
\end{equation}
holds for any $t \in (0,T]$, for any $\psi \in C^\infty_0([0,T]\times \R^3)$, where $\vu$ and $\vr$ are extended by zero outside of $\Omega$ and for any $b$: $[0,\infty)\to \R$, where $b\in C^\infty([0,\infty))$ and $b'(z) =0$ for all $z\geq M$, where $M>0$ is some finite number, the solution is called a weak renormalized solution.

Finally, the solution is called a weak (renormalized) finite energy solution, provided it is a weak (renormalized) solution and it fulfils the energy inequality
\begin{equation} \label{energy_inequality}
\begin{aligned}
&\int_\Omega \Big( \frac 12 \vr|\vu|^2 + P(\vr) +kL\eta \ln \eta +\zeta \eta^2 + \frac 12 {\rm Tr}\, \tn{T} \Big) (t,x) \dx  \\
& + 2\varepsilon \int_0^t \int_\Omega \Big(2kL |\nabla \eta^{\frac 12}|^2 + \zeta |\nabla \eta|^2 + \frac 1{4\lambda} {\rm Tr}\, \tn{T} \Big)\dx\dtau \\
& + \int_0^t \int_\Omega \Big(2\mu_0 (1+ |\tn{D}^d(\vu)|^2)^{\frac {r-2}{2}} |\tn{D}^d(\vu)|^2 + \Lambda (\Div \vu)\Big)\dx\dtau \\
&\leq  \int_\Omega \Big( \frac 12 \vr_0|\vu_0|^2 + P(\vr_0) +kL\eta_0 \ln \eta_0 +\zeta \eta_0^2 + \frac 12 {\rm Tr}\, \tn{T}_0 \Big) (x) \dx  \\
& + \int_0^t \int_\Omega \vr \vc{f}\cdot \vu \dx \dtau + \frac{3k}{4\lambda}\int_0^t \int_\Omega \eta \dx\dtau
\end{aligned}
\end{equation}
for a.e. $t \in (0,T]$; above, $P(\vr) = \vr \psi(\vr)$ is the pressure potential.
\end{definition}  

\subsection{Main results}
\label{s_2.2}

We now formulate the main results of this paper. The first one concerns the weak solvability of our problem.
\begin{theorem} \label{main_1}
Let $\Omega$ be a bounded domain of the class $C^{2,\beta}$ for some $0<\beta \leq 1$. Let $\vr_0 \in L^\infty(\Omega)$, $0<C_0 \leq \vr_0$ a.e. in $\Omega$, $\frac{\vc{m}_0}{\vr_0} \in L^2(\Omega;\R^3)$, $\tn{T}_0 \in L^2(\Omega;\R^{3\times 3})$ be symmetric positive definite, and  $\eta_0\in L^2(\Omega)$ be non-negative a.e. Let $\vc{f}\in L^\infty((0,T)\times \Omega;\R^3)$, $\varepsilon$, $k$, $L$, $\zeta$, $\lambda$ be positive constants and let $r\geq \frac 52$. 

Then there exists a renormalized weak finite energy weak solution to our problem  \eqref{01a}--\eqref{04a}, \eqref{viscous_stress}--\eqref{boundary} in the sense of Definition \ref{weak_solution}.
\end{theorem}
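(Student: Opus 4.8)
The plan is to obtain the solution as the limit of a multi-level approximation scheme of Lions--Feireisl type, modified to accommodate the three new features of the model: the monotone power-law shear stress, the stress-diffusive (hence parabolic) equations for $\tn T$ and $\eta$, and the singular bulk viscosity carried by the convex potential $\Lambda$. At the innermost level I would solve a Faedo--Galerkin approximation of the momentum equation on the span of the first $n$ eigenfunctions of the Dirichlet Laplacian, coupled to the continuity equation regularised by an artificial viscosity $\mu\Delta\vr$ (with Neumann data), an artificial pressure $\delta\vr^\Gamma$ with $\Gamma$ large, a replacement of $\Lambda$ by convex $C^1$ truncations $\Lambda_\delta\nearrow\Lambda$ of $r$-growth at infinity (so that the bulk dissipation still controls $\Div\vu$ in $L^r$ and the divergence is not yet constrained), and the parabolic problems \eqref{03a}--\eqref{04a} for $\eta$ and $\tn T$. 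Existence at this level follows from a Schauder/ODE fixed-point argument. Nonnegativity of $\eta$ and symmetry and positive definiteness of $\tn T$ are propagated from the data by an invariant-region/comparison argument as in \cite{Barrett-Lu-Suli}, using $\eta\ge 0$, the fact that the heat and transport semigroups preserve the cone of symmetric positive (semi)definite matrix fields, and that the zeroth-order terms $\tfrac{k}{2\lambda}\eta\tn I-\tfrac{1}{2\lambda}\tn T$ are of relaxation type. One then removes the parameters in the order $n\to\infty$, $\mu\to 0$, $\delta\to 0$.

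The uniform bounds (this is the content of Section~\ref{s_3}) come from the energy balance \eqref{energy_inequality} --- obtained by testing the momentum equation by $\vu$ and the $\eta$- and $\tn T$-equations by $kL(1+\ln\eta)+2\zeta\eta$ and $\tfrac{1}{2}\tn I$, exploiting the cancellation, in the combined energy balance, of the $\Div\tn T$ term in the momentum equation against the stretching term in the $\tn T$-equation --- together with one extra estimate for $\tn T$. The energy balance gives $\sqrt\vr\,\vu\in L^\infty(0,T;L^2)$, $\tn D^d(\vu)\in L^r(Q_T)$, $\Lambda(\Div\vu)\in L^1(Q_T)$, $\nabla\eta^{1/2},\nabla\eta\in L^2(Q_T)$, $\eta\in L^\infty(0,T;L^2)$ and ${\rm Tr}\,\tn T\in L^\infty(0,T;L^1)$; positive definiteness gives $|\tn T|\le {\rm Tr}\,\tn T$ pointwise, hence $\tn T\in L^\infty(0,T;L^1)$. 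Since $\Lambda$ is finite only on $(-\tfrac{1}{b},\tfrac{1}{b})$, the bound on $\int_{Q_T}\Lambda(\Div\vu)$ forces $\Div\vu\in(-\tfrac{1}{b},\tfrac{1}{b})$ a.e.\ (in the limit), and then the renormalised continuity equation together with $\Div\vu\in L^\infty$ yields $C_0e^{-T/b}\le\vr\le\|\vr_0\|_{L^\infty}e^{T/b}$, so $\vr$ is bounded and bounded away from zero, hence also $\vu\in L^\infty(0,T;L^2)$. To upgrade $\tn T$ to $L^2$, I would test the $\tn T$-equation by $\tn T$; all first-order terms are dominated by $C\int_\Omega|\nabla\vu|\,|\tn T|^2$, which, by Gagliardo--Nirenberg and Young's inequality in three dimensions, is bounded by $\tfrac{\varepsilon}{2}\|\nabla\tn T\|_{L^2}^2+C(1+\|\nabla\vu\|_{L^r}^{\sigma})\|\tn T\|_{L^2}^2$ with $\sigma=\tfrac{2r}{2r-3}$, and $\|\nabla\vu\|_{L^r}^{\sigma}\in L^1(0,T)$ precisely when $r\ge\tfrac{5}{2}$; Gronwall's lemma then closes the estimate, yielding $\tn T\in L^\infty(0,T;L^2)\cap L^2(0,T;W^{1,2})$. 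This is where the restriction $r\ge\tfrac{5}{2}$ is used.

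With these bounds uniform in the approximation parameters, the limit passages follow the standard pattern up to a few points. The density converges strongly: $\vr$ is bounded in $L^\infty$ and $\vu\in L^r(0,T;W^{1,r}_0)$ with $r\ge 2$, so the DiPerna--Lions commutator lemma gives the renormalised continuity equation \eqref{renor_cont}, and the effective-viscous-flux identity (adapted to the monotone stress $\tn S$) upgrades the weak convergence of $\vr$ to a.e.\ convergence and lets $p(\vr)$ pass and $\delta\vr^\Gamma\to 0$; the oscillation-defect measure, if needed at intermediate stages, is immaterial in the end since the limit density is bounded. For the velocity, Aubin--Lions applied to $\vr\vu$ (bounded in $L^\infty(0,T;L^2)$, with $\partial_t(\vr\vu)$ in a negative Sobolev space via the momentum equation) gives $\vr\vu\to\vr\vu$ in $C([0,T];W^{-1,2})$; with $\vu\rightharpoonup\vu$ in $L^2(0,T;W^{1,2})$ this yields $\int_{Q_T}\vr|\vu|^2\to\int_{Q_T}\vr|\vu|^2$, hence $\sqrt\vr\,\vu\to\sqrt\vr\,\vu$ strongly in $L^2(Q_T)$, and since $\vr$ is bounded below, $\vu\to\vu$ a.e.\ and, by the uniform higher integrability of $\vu$ and Vitali's theorem, strongly in $L^p(Q_T)$ for some $p>2$; Aubin--Lions for $\tn T$ gives $\tn T\to\tn T$ strongly in $L^p(Q_T)$ for $p<\tfrac{10}{3}$. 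This suffices to pass to the limit in $\vr\vu\otimes\vu$, in the coupling $\nabla\vu\,\tn T+\tn T\nabla^{\rm T}\vu$ of \eqref{weak_extra_stress}, and in $\eta\vu$. The monotone shear stress is treated by Minty's trick, using the energy balance to control $\limsup\int 2\mu_0(1+|\tn D^d(\vu_n)|^2)^{\frac{r-2}{2}}|\tn D^d(\vu_n)|^2$. Finally, for the bulk term: the level-$\delta$ momentum equation, tested by $\vcg{\varphi}-\vu_\delta$ and combined with convexity of $\Lambda_\delta$, is already the variational inequality with $\Lambda_\delta$ in place of $\Lambda$; using monotone convergence $\Lambda_\delta\nearrow\Lambda$ on the test-function side and weak lower semicontinuity of $w\mapsto\int_{Q_t}\Lambda(\Div w)$ on the $\Div\vu$ side, one passes to the limit to obtain \eqref{weak_momentum}, and the finiteness of $\int_{Q_T}\Lambda(\Div\vu)$ re-confirms $\Div\vu\in(-\tfrac{1}{b},\tfrac{1}{b})$ a.e. The energy inequality \eqref{energy_inequality} survives by weak lower semicontinuity of the convex terms.

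The main obstacle is the $\tn T$-estimate and its coupling with the momentum balance: the energy identity provides nothing better than $\tn T\in L^\infty(0,T;L^1)$, whereas the quadratic stretching term $\nabla\vu\,\tn T+\tn T\nabla^{\rm T}\vu$ (and, dually, $\Div\tn T$ in the momentum equation) needs at least $\tn T\in L^2$; closing the $L^2$-estimate by absorbing $\int_\Omega|\nabla\vu|\,|\tn T|^2$ into the stress diffusion $\varepsilon\|\nabla\tn T\|_{L^2}^2$ forces exactly $r\ge\tfrac{5}{2}$ in three space dimensions, and this must be done uniformly through every layer of the approximation while simultaneously keeping $\tn T$ positive definite and $|\Div\vu|<\tfrac{1}{b}$. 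A secondary difficulty is that one needs strong convergence of $\vu$ itself --- not merely of $\sqrt\vr\,\vu$ --- to pass to the limit in the convective term, and this must be reconciled with the variational-inequality formulation of the bulk term, which forbids treating the momentum equation as an identity and makes monotonicity and convexity arguments essential throughout.
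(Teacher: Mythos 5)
Your energy structure, the $r\ge\frac52$ computation for the Gronwall closure of the $\tn{T}$-estimate, the variational-inequality treatment of $\Lambda$, and the Minty argument for the power-law stress all match the paper. However, the architecture of your approximation scheme contains a step that would fail. You relax $\Lambda$ to truncations $\Lambda_\delta$ of $r$-growth, add artificial viscosity and artificial pressure, and then remove the parameters in the order $n\to\infty$, $\mu\to 0$, $\delta\to 0$. This means that at the stage $n\to\infty$ — where compactness of the density must actually be established — the divergence of the velocity is controlled only in $L^r$, not in $L^\infty$, so the density is \emph{not} bounded above or below; at that stage you are facing the full compressible power-law problem, and your appeal to ``the effective-viscous-flux identity (adapted to the monotone stress $\tn{S}$)'' is precisely the step that is not available: the classical identity rests on the linearity of the Newtonian stress (so that $\Div\tn{S}$ splits into $\nabla\Div\vu$ plus a curl), and for a monotone power-law stress no such identity is known — this is exactly the obstruction the paper's introduction attributes to the gap in Zhikov--Pastukhova. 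Losing the two-sided bounds on $\vr$ at the intermediate levels also breaks your chain $\sqrt{\vr}\,\vu\in L^\infty(0,T;L^2)\Rightarrow\vu\in L^\infty(0,T;L^2)\Rightarrow \vu\in L^{\frac{2r}{r-1}}(Q_T)$ and the pointwise convergence of $\vu$, on which your treatment of the convective term and of $\nabla\vu\,\tn{T}$ relies. The paper avoids all of this by ordering the limits the other way: the $\Lambda_\delta$-relaxation (a \emph{linear} extension beyond $\frac1b-\delta$) is removed \emph{while the Galerkin dimension $n$ is still finite}, so that when $n\to\infty$ the full potential $\Lambda$ is already in place, $\|\Div\vu_n\|_{L^\infty}\le\frac1b$, the density is bounded and bounded away from zero, and no artificial viscosity or artificial pressure is ever needed; the strong convergence of $\vr_n$ is then obtained not from an effective-viscous-flux identity but from the Feireisl--Liao--M\'alek device of testing the momentum \emph{inequality} with a time-mollification of $\vu$ itself, which yields $\int_0^t\int_\Omega(\overline{p(\vr)}\Div\vu-\overline{p(\vr)\Div\vu})\dx\dtau\le 0$ and, combined with the renormalized equation for the convex potential $P(\vr)$ and a Gronwall argument, gives $\vr_n\to\vr$ in $L^2(Q_T)$.

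A second, smaller gap: your invariant-region argument for $\tn{T}$ would at best propagate positive \emph{semi}-definiteness to the weak limit (the cone of symmetric positive semidefinite matrices is weakly closed, but strict positive definiteness is not), whereas Definition \ref{weak_solution} requires $\tn{T}$ positive definite a.e. The paper secures this quantitatively by adding the regularizing term $-\frac{\alpha}{2}\nabla{\rm Tr}(\log\tn{T})$ to the momentum equation and the cut-off $\chi_\sigma(\tn{T}^S)$ to the $\tn{T}$-equation; the resulting entropy-type estimate on $\int_0^T\int_\Omega(\eta+\alpha){\rm Tr}\,[\chi_\sigma(\tn{T})]^{-1}\dx\dtau$ is what excludes degeneracy of the eigenvalues of the limit tensor on a set of positive measure. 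Some device of this kind is needed in your scheme as well.
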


The other result deals with the weak-strong uniqueness.

\begin{theorem} \label{main_2}
Let $(\vr,\vu,\tn{T},\eta)$ be a weak solution to our problem \eqref{01a}--\eqref{04a}, \eqref{viscous_stress}--\eqref{boundary} in the sense of Definition \ref{weak_solution} and let $(\widetilde\vr,\widetilde \vu,\widetilde {\tn{T}},\widetilde \eta)$ be a strong solution to the same problem corresponding to the same data. Let $\tvu$, $\nabla \tvu$, $\pder{\tvu}{t}$ be bounded, $\tnT \in L^2(0,T;L^\infty(\Omega;\R^{3\times 3}))$, $\nabla |\tn{D}^d(\tvu)|$, $\nabla \Lambda'(\Div \tvu)$, $\nabla \teta(1+\teta)$ and $\Div \tnT \in L^2(0,T;L^3(\Omega;\R^3))$. Let in addition to assumptions of Theorem \ref{main_1} the initial value $\eta_0 \geq C_2 >0$ and for $r\in [\frac 52,10]$ also $\tn{T}_0 \in L^{3+a}(\Omega;\R^{3\times 3})$ for some $a>0$. Then 
$$
(\vr,\vu,\tn{T},\eta) = (\widetilde\vr,\widetilde \vu,\widetilde {\tn{T}},\widetilde \eta)
$$
in the set $[0,T] \times \Omega$.
\end{theorem}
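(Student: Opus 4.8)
The plan is to use the relative energy (relative entropy) method. Introduce the comparison functional
\[
\mathcal{E}(t):=\intO{\Big(\tfrac12\vr\,|\vu-\tvu|^2+E_P(\vr\,|\,\tvr)+E_\chi(\eta\,|\,\teta)+\tfrac12|\tn{T}-\tnT|^2\Big)(t,x)},
\]
where $P(\vr)=\vr\psi(\vr)$, $\chi(\eta)=kL\eta\ln\eta+\zeta\eta^2$, and $E_P(\vr|\tvr)=P(\vr)-P'(\tvr)(\vr-\tvr)-P(\tvr)$, $E_\chi(\eta|\teta)=\chi(\eta)-\chi'(\teta)(\eta-\teta)-\chi(\teta)$ are the associated Bregman distances. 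Note that the polymer pressure satisfies $kL\eta+\zeta\eta^2=\eta\chi'(\eta)-\chi(\eta)$, so $(\eta,\chi)$ stands to this ``pressure'' exactly as $(\vr,P)$ stands to $p$. Since $\vr\in L^\infty$, $\Div\vu$ is bounded, and $\teta,\tvr$ are bounded above and --- by the data and the transport structure of the strong solution --- also away from zero, the four integrands are nonnegative and $\mathcal{E}$ is equivalent to $\|\vr-\tvr\|_{L^2(\Omega)}^2+\|\eta-\teta\|_{L^2(\Omega)}^2+\|\tn{T}-\tnT\|_{L^2(\Omega)}^2+\intO{\vr|\vu-\tvu|^2}$. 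Because the two solutions share the same data, $\mathcal{E}(0)=0$, and the aim is to prove $\mathcal{E}\equiv0$ on $[0,T]$.

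First I would derive a relative energy inequality. Starting from the energy inequality \eqref{energy_inequality} for $(\vr,\vu,\eta,\tn{T})$, I subtract the contributions produced by testing the weak formulation against quantities built from the strong solution: \eqref{weak_momentum} with $\vcg{\varphi}=\tvu$ (admissible, as $\tvu,\nabla\tvu,\partial_t\tvu$ are bounded and $\tvu|_{\partial\Omega}=\tn{O}$); \eqref{weak_continuity}/\eqref{renor_cont}, after the standard cutoff and approximation, with $\psi=\tfrac12|\tvu|^2$ and $\psi=P'(\tvr)$; \eqref{weak_eta} with $\psi=\chi'(\teta)$; and \eqref{weak_extra_stress} with $\tn{M}=\tnT$, together with the identity $\frac{d}{dt}\tfrac12\|\tn{T}\|_{L^2}^2=\langle\partial_t\tn{T},\tn{T}\rangle$ (legitimate by a standard parabolic-duality argument, $\tn{T}$ not being an admissible test function). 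Inserting the pointwise equations \eqref{01a}--\eqref{04a} for all time derivatives of $(\tvr,\tvu,\teta,\tnT)$, one is led --- the momentum balance being a variational inequality because of the bounded-divergence stress, so this follows the Feireisl--Liu--M\'alek scheme --- to
\[
\mathcal{E}(t)+\int_0^t\mathcal{D}(\tau)\dtau\le\int_0^t\mathcal{R}(\tau)\dtau,\qquad\text{for a.e. }t\in(0,T).
\]
Here the dissipation $\mathcal{D}\ge0$ collects (i) the monotonicity defect of the power-law operator, $c\intO{(1+|\tn{D}^d(\vu)|+|\tn{D}^d(\tvu)|)^{r-2}|\tn{D}^d(\vu-\tvu)|^2}$, which controls $\|\nabla(\vu-\tvu)\|_{L^r(\Omega)}^r$ and, since $r\ge2$, also $\|\nabla(\vu-\tvu)\|_{L^2(\Omega)}^2$, up to lower-order terms; (ii) the convexity defect $\intO{\big(\L(\Div\vu)-\L(\Div\tvu)-\L'(\Div\tvu)\Div(\vu-\tvu)\big)}$; and (iii) the diffusive terms $\e\intO{|\nabla(\tn{T}-\tnT)|^2}$, $\e\intO{|\nabla(\eta-\teta)|^2}$ and $\tfrac1{2\lambda}\intO{|\tn{T}-\tnT|^2}$. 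Every term of $\mathcal{R}$ carries at least one factor among $\vu-\tvu$, $\vr-\tvr$, $\eta-\teta$, $\tn{T}-\tnT$, or a spatial gradient of one of these.

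Next I would estimate $\mathcal{R}$. Its compressible Navier--Stokes part --- convective term $\vr\vu\otimes\vu$, pressure remainder, external force --- is handled exactly as in the Feireisl--Jin--Novotn\'y relative-energy theory: the pressure remainder reduces to $\intO{\big(p(\vr)-p'(\tvr)(\vr-\tvr)-p(\tvr)\big)\Div\tvu}$, bounded by $C\|\nabla\tvu\|_{L^\infty}\intO{E_P(\vr|\tvr)}$ because $\vr,\tvr$ stay in a fixed compact subinterval of $(0,\infty)$, while the convective and kinetic remainders are bounded by $\delta\mathcal{D}+C\intO{\vr|\vu-\tvu|^2}$ (the kinetic term absorbed via $\vr\in L^\infty$ and the smallness of $\mathcal{E}$ for short times). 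The $\eta$-contributions are structurally identical with $(\chi,q)$ in place of $(P,p)$; the coupling of $q(\eta)$ with $\Div\vu$ in the momentum balance is $\le C\intO{E_\chi(\eta|\teta)}$ by $\zeta(\eta-\teta)^2\le E_\chi(\eta|\teta)$, and the diffusive cross-terms in $\frac{d}{dt}\int E_\chi(\eta|\teta)$ are controlled using $\teta$ bounded above and away from zero (where $\eta_0\ge C_2>0$ enters) together with $\nabla\teta(1+\teta)\in L^2(0,T;L^3(\Omega))$. The extra-stress terms involving only $\tn{T}-\tnT$ and at most an $L^2$ derivative of $\vu-\tvu$ --- the transport piece $\tvu(\tn{T}-\tnT)::\nabla(\tn{T}-\tnT)$, the stretching piece $\nabla\tvu(\tn{T}-\tnT):(\tn{T}-\tnT)$, the momentum coupling $-(\tn{T}-\tnT):\nabla(\vu-\tvu)$, and the source $\tfrac{k}{2\lambda}(\eta-\teta)\,{\rm Tr}(\tn{T}-\tnT)$ --- are bounded by $\delta\mathcal{D}+C\big(\intO{|\tn{T}-\tnT|^2}+\intO{|\eta-\teta|^2}\big)$ using only $\tvu,\tnT\in L^\infty$, $\tn{T}-\tnT\in L^2$ and $\nabla(\vu-\tvu)\in L^2$.

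The one family of terms that resists this routine --- and is the heart of the matter --- is the extra-stress stretching/transport remainder carrying one \emph{untamed} velocity-gradient difference, namely (after the splittings above) terms of the type $\int_0^t\intO{\nabla(\vu-\tvu)\,\tn{T}:(\tn{T}-\tnT)}\dtau$ and $\int_0^t\intO{(\vu-\tvu)\,\tn{T}::\nabla(\tn{T}-\tnT)}\dtau$, in which $\tn{T}$ is a priori only in $L^\infty(0,T;L^2(\Omega))\cap L^2(0,T;W^{1,2}(\Omega))$. This is the same obstruction that forced $r\ge\frac52$ at the level of a priori estimates. For $r\ge10$ the power-law dissipation puts $\nabla(\vu-\tvu)$ in $L^r(0,T;L^r(\Omega))$ with $r\ge10$, and this, with $\tn{T}\in L^2(0,T;L^6(\Omega))$ (Sobolev) and $\tn{T}-\tnT\in L^\infty(0,T;L^2)\cap L^2(0,T;L^6)$, suffices: by H\"older and parabolic interpolation these terms are $\le\delta\mathcal{D}+\int_0^t C(\tau)\mathcal{E}(\tau)\dtau$ with $C\in L^1(0,T)$. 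For $r\in[\frac52,10)$ the velocity is too weak, and the plan is to first \emph{propagate} the extra integrability $\tn{T}_0\in L^{3+a}(\Omega)$ into $\tn{T}\in L^\infty(0,T;L^{3+a}(\Omega))$ --- a Moser/renormalization estimate obtained by testing \eqref{weak_extra_stress} with powers of $|\tn{T}|$, which closes precisely because $\Div\vu$ is bounded and $\eta\in L^\infty_tL^2\cap L^2_tL^6$ --- and then interpolate it against $\tn{T}\in L^2(0,T;L^6(\Omega))$ so that $\tn{T}$ occupies an $L^\infty(0,T;L^s(\Omega))$ slot with $s>3$ large enough ($s$ slightly above $3$ being enough already when $r=\frac52$) that the H\"older budget against $\nabla(\vu-\tvu)\in L^r_tL^r_x$ and the third factor $\tn{T}-\tnT$ --- which, interpolated between $L^2$ and $L^6$, always retains a strictly positive power of $\mathcal{E}_{\tn{T}}$ --- again delivers $\delta\mathcal{D}+\int_0^t C(\tau)\mathcal{E}(\tau)\dtau$. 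I expect this interpolation bookkeeping, together with the accompanying propagation estimate, to be the technically heaviest and most delicate step, and the hypotheses $\eta_0\ge C_2>0$ and $\tn{T}_0\in L^{3+a}$ for $r\le10$ are used nowhere else. Once $\mathcal{R}\le\delta\mathcal{D}+\int_0^t C(\tau)\mathcal{E}(\tau)\dtau$ with $C\in L^1(0,T)$ is established, absorbing $\delta\mathcal{D}$ into the left-hand side and applying Gronwall's lemma with $\mathcal{E}(0)=0$ forces $\mathcal{E}\equiv0$ on $[0,T]$, whence $\vr=\tvr$, $\eta=\teta$, $\tn{T}=\tnT$, and finally $\vu=\tvu$ since $\vr=\tvr>0$.
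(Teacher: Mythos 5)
Your proposal follows the same overall strategy as the paper: a relative energy inequality built from the Bregman distances for $\vr$ and $\eta$ plus the kinetic part, obtained by testing the weak formulations with $H'(\tvr)$, $\tfrac12|\tvu|^2$, $G'(\teta)$ and by inserting $\tvu$ into the variational inequality for the momentum, combined with a separate Gronwall estimate for $\|\tn{T}-\tnT\|_2^2$; and you correctly isolate the critical term $\int_0^t\int_\Omega\nabla(\vu-\tvu)\,\tn{T}:(\tn{T}-\tnT)\dx\dtau$, which is exactly the paper's $\mathcal M_5$ and the only place where $\tn{T}_0\in L^{3+a}$ enters. The one genuinely different sub-step is how you propagate $\tn{T}\in L^\infty(0,T;L^{3+a}(\Omega))$: you propose a Moser/renormalization estimate, testing the $\tn{T}$-equation with $|\tn{T}|^{1+a}\tn{T}$, closing via the stress diffusion and $\nabla\vu\in L^{5/2}$. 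The paper instead rewrites the non-divergence right-hand side of \eqref{04a} in divergence form via an auxiliary elliptic problem and invokes maximal $L^p$--$L^q$ regularity for the Neumann heat equation (Lemma \ref{estimates_parabolic_Neumann}), getting $\|\nabla\vu\,\tn{T}\|_{3(3+a)/(6+a)}\le\|\nabla\vu\|_{5/2}\|\tn{T}\|_{15(3+a)/(12-a)}$ with the right-hand side time-integrable from the a priori bounds. Your route avoids the parabolic-regularity machinery but must be justified at the approximation level (since $|\tn{T}|^{1+a}\tn{T}$ is not an admissible test function in \eqref{weak_extra_stress}); the paper's route is uniform in $r\ge\frac52$ and is also what powers its Remark that for $r\ge10$ the hypothesis on $\tn{T}_0$ can be dropped, whereas your direct H\"older/interpolation argument for $r\ge10$ is borderline at $r=10$ and would need the exponents checked.

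One point where your write-up is thinner than the difficulty warrants: the ``standard parabolic-duality argument'' for $\tfrac{d}{dt}\tfrac12\|\tn{T}\|_2^2=\langle\partial_t\tn{T},\tn{T}\rangle$ does not apply directly, because for a weak solution $\partial_t\tn{T}$ is only in $L^{10/7}(0,T;W^{-1,10/7}(\Omega))$ while $\tn{T}\in L^2(0,T;W^{1,2}(\Omega))\cap L^\infty(0,T;L^2(\Omega))$, so the duality pairing $\langle\partial_t\tn{T},\tn{T}\rangle$ and the term $\int\mathrm{Div}(\vu\,\tn{T}):\tn{T}$ are not obviously defined. The paper resolves this by deriving the $L^2$ energy \emph{inequality} \eqref{T1} as a limit of the approximate identities \eqref{energy_est_T} and then combining it with three further identities (\eqref{T2}--\eqref{T4}), in which only the admissible pairings $\langle\partial_t\tnT,\tn{T}\rangle$ and the weak formulation tested with the smooth $\tnT$ appear. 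You should either adopt that device or make precise which duality you are invoking; as stated, this step is a gap, though a repairable one that does not change the architecture of the proof.
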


\section{A priori estimates}
\label{s_3}

Before showing the formal a priori estimates, we recall several general results which will be useful throughout the paper. 

\subsection{Preliminaries}
\label{s_3.1}

\begin{lemma} \label{Korn}
Let $\vc{v} \in W^{1,p}_0(\Omega;\R^3)$, $1<p<\infty$. Then there exists a constant $C=C(p)$ such that
\begin{equation} \label{K1}
\|\vc{v}\|_{1,p} \leq C \|\tn{D}^d(\vc{v})\|_p = C \|\nabla \vc{v} + \nabla^T \vc{v} -\tfrac 23 \Div \vc{v}\tn{I}\|_p.
\end{equation}
\end{lemma}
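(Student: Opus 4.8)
By \eqref{deviatoric} we have $\tn{D}^d(\vc v)=\tfrac12\big(\nabla\vc v+\nabla^T\vc v-\tfrac23(\Div\vc v)\tn{I}\big)$, so the equality of the two right-hand sides in \eqref{K1} is merely a definition, and it remains to prove $\|\vc v\|_{1,p}\le C(p)\,\|\tn{D}^d(\vc v)\|_p$. Since $\vc v\in W^{1,p}_0(\Omega;\R^3)$, the Poincar\'e inequality gives $\|\vc v\|_p\le C(\Omega)\,\|\nabla\vc v\|_p$, so it suffices to establish the gradient bound $\|\nabla\vc v\|_{L^p(\Omega)}\le C(p)\,\|\tn{D}^d(\vc v)\|_{L^p(\Omega)}$.

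The plan is to transfer this to a Fourier multiplier estimate on $\R^3$. For $\vc v\in C^\infty_c(\Omega;\R^3)$, extended by zero to $\R^3$, set $E_{jk}:=(\tn{D}^d(\vc v))_{jk}$; starting from $\widehat{E}_{jk}=\tfrac i2(\xi_j\widehat{v}_k+\xi_k\widehat{v}_j)-\tfrac i3(\xi\cdot\widehat{\vc v})\,\delta_{jk}$ and contracting first with $\xi_j\xi_k$ and then with $\xi_j$, one recovers $\xi\cdot\widehat{\vc v}$ and then $|\xi|^2\widehat{v}_k$, which gives the identity
\[
\widehat{\partial_n v_k}(\xi)=\frac{2\,\xi_n\xi_j}{|\xi|^2}\,\widehat{E}_{jk}(\xi)-\frac12\,\frac{\xi_n\xi_k\xi_l\xi_m}{|\xi|^4}\,\widehat{E}_{ml}(\xi)
\]
(summation over repeated indices). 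The multipliers $\xi_n\xi_j/|\xi|^2$ and $\xi_n\xi_k\xi_l\xi_m/|\xi|^4$ are smooth on $\R^3\setminus\{0\}$ and homogeneous of degree zero, hence satisfy the Mikhlin--H\"ormander condition, and so the associated convolution operators are bounded on $L^p(\R^3)$ for all $1<p<\infty$. Since $C^\infty_c(\Omega;\R^3)$ is dense in $W^{1,p}_0(\Omega;\R^3)$ and, for $\vc v$ in the latter space, its extension by zero lies in $W^{1,p}(\R^3;\R^3)$ with $\nabla\vc v$ and $\tn{D}^d(\vc v)$ on $\R^3$ being the zero extensions of the corresponding fields on $\Omega$, a density argument yields
\[
\|\nabla\vc v\|_{L^p(\Omega)}=\|\nabla\vc v\|_{L^p(\R^3)}\le C(p)\,\|\tn{D}^d(\vc v)\|_{L^p(\R^3)}=C(p)\,\|\tn{D}^d(\vc v)\|_{L^p(\Omega)},
\]
which together with the Poincar\'e inequality proves \eqref{K1}.

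I do not expect a serious obstacle: the only substantial ingredient is the $L^p$ boundedness of the degree-zero (Calder\'on--Zygmund) multiplier operators, the remainder being routine. The one point to handle with care is the density/limiting step that turns the formal Fourier identity into an $L^p$ identity; and it should be stressed that the homogeneous Dirichlet condition $\vc v\in W^{1,p}_0$ is essential, since on $W^{1,p}(\Omega;\R^3)$ the kernel of $\tn{D}^d$ (the conformal Killing fields) is finite-dimensional but nonzero, so \eqref{K1} would then only hold modulo that space. A Fourier-free alternative is the classical argument: combine the usual first Korn inequality $\|\vc v\|_{1,p}\le C\|\tn{D}(\vc v)\|_p$ on $W^{1,p}_0$ with the bound $\|\Div\vc v\|_p\le C\|\tn{D}^d(\vc v)\|_p$, the latter obtained from the elliptic identity $\Delta(\Div\vc v)=\tfrac32\,\partial_i\partial_j(\tn{D}^d(\vc v))_{ij}$ via a Ne\v{c}as-type negative-norm estimate and a compactness argument removing the lower-order term; this works but is longer.
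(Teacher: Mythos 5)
Your argument is correct. The Fourier identity checks out: contracting $\widehat{E}_{jk}=\tfrac i2(\xi_j\widehat v_k+\xi_k\widehat v_j)-\tfrac i3(\xi\cdot\widehat{\vc v})\delta_{jk}$ with $\xi_j\xi_k$ gives $i(\xi\cdot\widehat{\vc v})=\tfrac 32|\xi|^{-2}\xi_j\xi_k\widehat E_{jk}$, and then contracting with $\xi_j$ yields exactly the representation of $\widehat{\partial_n v_k}$ you wrote, with degree-zero homogeneous Mikhlin multipliers; the zero-extension and density steps are handled properly, and your caveat that the result fails on $W^{1,p}(\Omega)$ modulo the (finite-dimensional) kernel of $\tn{D}^d$ is accurate — the paper addresses that case separately in Remark \ref{r1}(ii). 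The only difference from the paper is one of economy: the paper's entire proof is the observation that one may extend by zero to $\R^3$ and then invoke the generalized Korn inequality of \cite[Theorem 10.16]{FN_book}, whereas you reconstruct from scratch the whole-space estimate that this citation supplies (and your multiplier computation is essentially how that cited theorem is proved). So you have given a self-contained proof of a result the authors chose to quote; nothing is gained or lost mathematically, only in length. One cosmetic point: the two expressions in \eqref{K1} differ by the factor $\tfrac12$ in the definition \eqref{deviatoric}, so the displayed "equality" of norms is really an equality up to that constant (harmless, since it is absorbed into $C$, but it is not literally "merely a definition").
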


\begin{proof}
It is a direct consequence of \cite[Theorem 10.16]{FN_book}, since we may extend the function by zero to the whole $\R^3$.
\end{proof}

\begin{remark} \label{r1}
(i) The same result ($\frac 23$ replaced by $\frac 2N$) holds for any $N>2$. \newline
(ii) If $\vc{v} \in W^{1,p}(\Omega;\R^3)$ only (the case of the Navier boundary conditions, where we additionally have $\vc{v}\cdot \vc{n}=0$ on $\partial \Omega$), then we may use \cite[Theorem 10.17]{FN_book} to get
\begin{equation} \label{K2}
\|\vc{v}\|_{1,p} \leq C \Big(\|\tn{D}^d(\vc{v})\|_p + \int_\Omega \vr|\vc{v}|\dx\Big)
\end{equation}
which holds for any $\vc{v} \in W^{1,p}(\Omega;\R^3)$, $1<p<\infty$ and arbitrary $\vr \geq 0$ a.e. such that $\vr \in L^q(\Omega)$ for some $q>1$ such that $\int_\Omega \vr = M_0>0$. This is the main difference in the case of the Navier boundary conditions for the velocity. 
\end{remark}

Let us now consider the linear parabolic problem with the Neumann boundary conditions. We have for the problem
\begin{equation} \label{parabolic_Neumann}
\begin{aligned}
\pder{z}{t} -\varepsilon \Delta z = h & \qquad \text{in } (0,T)\times \Omega \\
z(0,x) = z_0(x) & \qquad \text{in } \Omega \\
\pder{z}{\vc{n}}=0 & \qquad \text{on } (0,T)\times \partial \Omega
\end{aligned}
\end{equation}
the following result
\begin{lemma} \label{estimates_parabolic_Neumann}
Let $\Omega \in C^{2,\Theta}$ for some $0<\Theta\leq 1$, $1<p,q<\infty$. Let the initial condition $z_0 \in \overline{\{z\in C^\infty(\overline{\Omega}: \pder{z}{\vc{n}} = 0 \text{ on } \partial\Omega\}}^{\|\cdot\|_{W^{2-\frac 2p,q}(\Omega)}}$ and let $h \in L^p(0,T;L^q(\Omega))$. Then there exists unique strong solution to problem \eqref{parabolic_Neumann} and it satisfies the following estimates
\begin{equation} \label{est_par_Neu_1}
\begin{aligned}
\varepsilon^{1-\frac 1p} &\|z\|_{L^\infty(0,T; W^{2-\frac 2p,q}(\Omega))} + \Big\|\pder{z}{t}\Big\|_{L^p(0,T;L^q(\Omega))} + \varepsilon \|z\|_{L^p(0,T;W^{2,q}(\Omega))} \\ 
& \leq C (\varepsilon^{1-\frac 1p} \|z_0\|_{W^{2-2p,q}(\Omega)} + \|h\|_{L^p(0,T;L^q(\Omega))}).
\end{aligned}
\end{equation}
If in particular $h=\Div \vc{g}$, $\vc{g} \in L^p(0,T;L^q(\Omega;\R^3))$, then we have
\begin{equation} \label{est_par_Neu_2}
\begin{aligned} 
\varepsilon^{1-\frac 1p} &\|z\|_{L^\infty(0,T;L^{q}(\Omega))} + \varepsilon \|\nabla z\|_{L^p(0,T;L^{q}(\Omega))} \\
&\leq C ( \varepsilon^{1-\frac 1p} \|z_0\|_{L^q(\Omega)} + \|\vc{g}\|_{L^p(0,T;L^q(\Omega))}).
\end{aligned}
\end{equation}
\end{lemma}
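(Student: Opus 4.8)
The plan is to reduce everything to the maximal regularity theory for the heat semigroup with Neumann boundary conditions on the $C^{2,\Theta}$ domain $\Omega$. First I would rescale time (or equivalently absorb $\varepsilon$ into the operator) by setting $\tau = \varepsilon t$, so that the equation becomes $\partial_\tau z - \Delta z = \frac{1}{\varepsilon} h$ on $(0,\varepsilon T)$; this turns the $\varepsilon$-dependence into explicit powers of $\varepsilon$ coming from the rescaling of the $L^p$-norm in time and from the initial-value term. Then I would invoke the classical $L^p$--$L^q$ maximal regularity estimate for the Neumann Laplacian (see e.g. Amann, or Denk--Hieber--Pr\"uss, or the parabolic theory in \cite{FN_book}): the realization of $-\Delta$ with homogeneous Neumann boundary conditions on $L^q(\Omega)$ generates an analytic semigroup and has the property of maximal $L^p$-regularity, and the interpolation space corresponding to $t=0$ data is precisely the closure of smooth Neumann functions in $W^{2-\frac 2p,q}(\Omega)$, which is exactly the space in which $z_0$ is assumed to lie. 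This immediately yields existence and uniqueness of the strong solution together with the bound $\|\partial_\tau z\|_{L^p(0,\varepsilon T;L^q)} + \|z\|_{L^p(0,\varepsilon T;W^{2,q})} + \|z\|_{L^\infty(0,\varepsilon T;W^{2-2/p,q})} \le C(\|z_0\|_{W^{2-2/p,q}} + \|\frac 1\varepsilon h\|_{L^p(0,\varepsilon T;L^q)})$, with $C$ independent of $T$ (on a bounded domain one gets $T$-independence because of the spectral gap, or one simply absorbs the resulting $e^{CT}$ into the constant since $T$ is fixed). Undoing the rescaling and tracking the powers of $\varepsilon$ (each time-integral $\int_0^{\varepsilon T}\!\!d\tau = \varepsilon \int_0^T dt$ contributes $\varepsilon^{1/p}$, and $\partial_\tau = \frac 1\varepsilon \partial_t$) produces exactly the weights $\varepsilon^{1-\frac 1p}$, $1$, $\varepsilon$ displayed in \eqref{est_par_Neu_1}.

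For the second estimate \eqref{est_par_Neu_2}, the idea is to use the same semigroup but with the data in divergence form, exploiting that $(-\Delta_N + 1)^{-1/2}\Div$ is bounded on $L^q$ (a standard consequence of the boundedness of the Riesz-type operators associated with the Neumann Laplacian on $C^{2,\Theta}$ domains). Equivalently, one tests the Duhamel formula $z(t) = e^{\varepsilon t \Delta_N} z_0 + \int_0^t e^{\varepsilon(t-s)\Delta_N}\Div\vc g(s)\,ds$ and uses the smoothing bounds $\|e^{\varepsilon t\Delta_N}\Div \vc g\|_{L^q} \lesssim (\varepsilon t)^{-1/2}\|\vc g\|_{L^q}$ and $\|\nabla e^{\varepsilon t\Delta_N}\Div\vc g\|_{L^q}\lesssim (\varepsilon t)^{-1}\|\vc g\|_{L^q}$ together with the parabolic maximal-regularity/singular-integral estimate in time (the operator $\vc g \mapsto \nabla\int_0^t e^{\varepsilon(t-s)\Delta_N}\Div\vc g\,ds$ is bounded on $L^p(0,T;L^q)$ with a constant carrying one power of $\varepsilon^{-1}$ from the scaling, which combines with the $\varepsilon$ in front of $\nabla z$ to give the stated weight). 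The term involving $z_0$ is handled by the analytic-semigroup bound $\|e^{\varepsilon t\Delta_N}z_0\|_{L^\infty(0,T;L^q)}\le \|z_0\|_{L^q}$ and $\varepsilon\|\nabla e^{\varepsilon t\Delta_N}z_0\|_{L^p(0,T;L^q)} \le C\varepsilon^{1-1/p}\|z_0\|_{L^q}$ after rescaling, using $\|\nabla e^{s\Delta_N}\|_{L^q\to L^q}\lesssim s^{-1/2}$ and $s^{-1/2}\in L^p(0,\cdot)$ for $p<2$; for $p\ge 2$ one interpolates or uses that $t^{-1/2}$ is in weak-$L^2$ and a Hardy-type inequality. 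In all cases only the two displayed powers of $\varepsilon$ appear.

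The main obstacle is purely bookkeeping rather than conceptual: one must verify that the constant $C$ in the maximal regularity estimate is genuinely independent of $\varepsilon$ and of $T$ after the rescaling, and that the fractional Sobolev trace space $W^{2-\frac 2p,q}(\Omega)$ with the Neumann constraint is exactly the real interpolation space $(L^q(\Omega), D(\Delta_N))_{1-1/p,p}$ — this is where the $C^{2,\Theta}$ regularity of the boundary is used, and where one should cite the appropriate result (e.g. \cite[Chapter~10]{FN_book} or Amann's work on maximal regularity for boundary value problems). Once that identification and the $\varepsilon$-scaling are pinned down, the rest is a routine application of the Duhamel formula and the known smoothing/maximal-regularity bounds for the Neumann heat semigroup, and the divergence-form estimate \eqref{est_par_Neu_2} follows by the commutation of $\Div$ with the semigroup plus the $L^q$-boundedness of $(1-\Delta_N)^{-1/2}\Div$.
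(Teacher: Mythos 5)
The paper's entire proof is the single sentence ``The proof can be compiled from results in \cite{Amann1}'', and your proposal is precisely such a compilation: time-rescaling to extract the $\varepsilon$-weights, maximal $L^p$--$L^q$ regularity for the Neumann Laplacian with the trace space identified as the real interpolation space, and a Duhamel/divergence-form argument for \eqref{est_par_Neu_2}. Your approach matches the paper's intent and the $\varepsilon$-bookkeeping is correct, so this is fine.
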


\begin{proof}
The proof can be compiled from results in \cite{Amann1}.
\end{proof}

Let the matrix $\tn{P}$ be real and symmetric. Then we can define the function calculus for matrices in the following way. There exists a matrix $\tn{Q}$ orthogonal (${\rm det}\, \tn{Q} =1$) such that $\tn{P} = \tn{Q} \tn{D} \tn{Q}^T$, where $\tn{D}$ is a diagonal matrix with the (real) eigenvalues of $\tn{P}$ on the diagonal, $\tn{D} = \{\lambda_i \delta_{ij}\}_{i,j=1}^3$. 

For a function $g$: $\R \to \R$ we may define
$$
g(\tn{P}) := \tn{Q} g(\tn{D}) \tn{Q}^T \qquad \text{ and } \qquad g'(\tn{P}) := \tn{Q} g'(\tn{D}) \tn{Q}^T,
$$
where 
$$
g(\tn{D}) = \{g(\lambda_i) \delta_{ij}\}_{i,j=1}^3 \qquad \text{ and } \qquad g'(\tn{D}) = \{g'(\lambda_i) \delta_{ij}\}_{i,j=1}^3.
$$
%

\subsection{Formal a priori estimates}
\label{s_3.2}

We deduce here formal a priori estimates under the assumption that a) the solution is as smooth as required in the procedure of obtaining the estimates b) the extra stress tensor is symmetric and positive definite in $(0,T)\times \Omega$. While the former will be later obtained for an approximate system which allows for smooth solutions, the latter will be justified by suitable approximation due to the same assumption on the initial datum $\tn{T}_0$. The main goal of this short subsection is to explain why we need the exponent $r\geq \frac 52$. In the construction of solutions this fact may be hidden by the number of special parameters and extra terms which allow for proving existence of an approximate solutions.

We multiply \eqref{02a} on $\vc{u}$ and integrate the resulted equality over $\Omega$ (or, equivalently use $\vcg{\varphi} = \vc{0}$ in \eqref{weak_momentum}). Applying also \eqref{01a} in the renormalized form
we end up with
\begin{equation} \label{as.1}
\begin{aligned}
& \frac{{\rm d}}{{\rm d}t} \int_\Omega \Big( \frac 12 \vr |\vu|^2 + P(\vr)\Big)\dx + \int_\Omega \Big(2\mu_0 (1+ |\tn{D}^d(\vu)|^2)^{\frac{r-2}{2}} |\tn{D}^d(\vu)|^2 + \Lambda (\Div \vu)\Big)\dx \\
&= \int_\Omega \vr\vc{f}\cdot \vu \dx - \int_\Omega \tn{T}:\tn{D}(\vu) \dx + \int_\Omega (kL\eta + \zeta \eta^2)\Div \vu \dx,  
\end{aligned}
\end{equation}
where $P(\vr) = \vr \int_0^\vr \frac{p(s)}{s^2}\,{\rm d}s = \psi(\vr)\vr$. Using the renormalized form of the equation for $\eta$ (alternatively, multiplying \eqref{03a} on $kL (\ln \eta +1) + 2\zeta\eta$ and integrating over $\Omega$) we get
\begin{equation} \label{as.2}
-\int_\Omega (kL\eta + \zeta \eta^2)\Div \vu \dx = \frac{{\rm d}}{{\rm d}t} \int_\Omega \big(kL \eta \ln \eta + \zeta \eta^2\big)\dx + \varepsilon \int_\Omega \Big(\frac{kL}{\eta} + 2\zeta \Big)|\nabla \eta|^2 \dx.
\end{equation}
Next, we apply the matrix operator trace on \eqref{04a} and use the symmetry of $\tn{T}$. Then we get
\begin{equation} \label{as.3}
\int_\Omega \tn{T}:\nabla \vu \dx = \frac{{\rm d}}{{\rm d}t} \frac 12 \int_\Omega {\rm Tr}\, \tn{T} \dx + \frac{1}{4\lambda} \int_\Omega {\rm Tr}\, \tn{T} \dx -\frac{3k}{4\lambda} \int_\Omega \eta \dx.
\end{equation}
Summing up \eqref{as.1}--\eqref{as.3} we end up with
\begin{equation} \label{as.4}
\begin{aligned}
& \frac{{\rm d}}{{\rm d}t} \int_\Omega \Big(\frac 12 \vr |\vu|^2 + P(\vr) + \frac 12 {\rm Tr}\, \tn{T} + kL\eta \ln \eta + \zeta \eta^2 \Big)\dx + \varepsilon\int_\Omega \Big(\frac{kL}{\eta}+ 2\zeta \Big)|\nabla \eta|^2\dx \\
&+ \int_\Omega \Big(2\mu_0(1+ |\tn{D}^d(\vu)|^2)^{\frac{r-2}{2}} |\tn{D}^d(\vu)|^2 + \Lambda (\Div \vu) + \frac{1}{4\lambda} {\rm Tr}\, \tn{T} \Big)\dx \\
&= \int_\Omega \vr\vc{f}\cdot \vu \dx + \frac{3k}{4\lambda}\int_\Omega \eta \dx.
\end{aligned}
\end{equation}
Assuming the extra stress tensor additionally positive definite we can read from \eqref{as.4} the following estimates
\begin{equation} \label{apriori_estimates1}
\begin{aligned}
\|\Div\vu\|_{L^\infty((0,T)\times \Omega))} &+ \|\nabla\vu\|_{L^r((0,T)\times \Omega))} + \|\eta\|_{L^\infty(0,T; L^2(\Omega))} \\
&+ \|\nabla \eta^{\frac 12}\|_{L^2((0,T)\times \Omega))} + \|\nabla \eta\|_{L^2((0,T)\times \Omega))} \leq C(DATA).
\end{aligned}
\end{equation}
Furthermore, assuming boundedness of the initial condition for the density, we also have
\begin{equation} \label{apriori_estimates2}
\|\vr\|_{L^\infty((0,T)\times \Omega))}  \leq C(DATA).
\end{equation}
Next, since the initial density is strictly positive and the initial polymer pressure is non-negative, we also have $\vr\geq C >0$ and $\eta\geq 0$ a.e. in $(0,T)\times \Omega$.
To conclude, we need some estimates of the extra stress tensor. To this aim, we multiply \eqref{04a} on $\tn{T}$ and integrate over $\Omega$. We get
\begin{equation} \label{energy_est_T}
\begin{aligned}
 \frac{{\rm d}}{{\rm d}t} \frac 12 \int_\Omega |\tn{T}|^2 \dx &+ \varepsilon \int_\Omega  |\nabla \tn{T}|^2 \dx + \frac{1}{2\lambda} \int_\Omega |\tn{T}|^2\dx \\
 & = -\frac 12 \int_\Omega \Div \vu \,|\tn{T}|^2 \dx + 2\int_\Omega (\nabla \vu \tn{T}):\tn{T} \dx + \frac{k}{2\lambda} \int_\Omega \eta {\rm Tr}\, \tn{T} \dx.  
\end{aligned}
\end{equation}
While the first term on the right-hand side can be controlled easily, for the second one we have
$$
\Big |\int_\Omega (\nabla \vu \tn{T}):\tn{T} \dx\Big| \leq C \|\nabla \vu\|_{\frac 52} \|\tn{T}\|_{\frac {10}{3}}^2 \leq \min\Big\{\frac \varepsilon 2,\frac{1}{4\lambda}\Big\} \|\tn{T}\|_{1,2}^2 + C \|\nabla \vu\|_{\frac 52}^{\frac 52} \|\tn{T}\|_2^2.
$$
The third term can be easily estimated and we get by Gronwall's argument additional estimate
\begin{equation} \label{apriori_estimates3}
\|\tn{T}\|_{L^\infty(0,T; L^2(\Omega))} + \|\tn{T}\|_{L^2(0,T; W^{1,2}(\Omega))} \leq C(DATA).
\end{equation}
Even though this information is not yet sufficient for the weak compactness of solutions, we stop dealing with formal estimates at this moment and return back to this problem during the construction of solutions.  On the other hand, from the computations above it is evident that at least by the given technique, the minimal value for closing the estimates is the value $r=\frac 52$ for which can get \eqref{apriori_estimates1}--\eqref{apriori_estimates3}. We now start with construction of approximate solutions.

\section{Construction of a weak solution}
\label{s_4}

We first mollify the initial data. Let $S_\Theta$ be the spatial mollifier (all functions below are extended by zero to the whole $\R^3$). We define
$$
\vr_{0,\Theta} := S_\Theta [\vr_0 ], \quad \vu_{0,\Theta}:= S_\Theta[\vu_0], \quad \eta_{0,\Theta} := \Theta + S_\Theta(\eta_0), \quad \tn{T}_{0,\Theta} :=\Theta \tn{I} + S_\Theta[\tn{T}_0].
$$
Then all functions are smooth, moreover, $\vr_{0,\Theta}$ and $\eta_{0,\Theta}$ are additionally strictly positive (the initial density independently of $\Theta$ while for $\eta_{0,\Theta}$ we lose this property for $\Theta \to 0^+$) and $\tn{T}_{0,\Theta}$ is strictly positive definite and symmetric. We also have $\|\vr_{0,\Theta}\|_\infty \leq \|\vr_0\|_\infty$ and for $\Theta \to 0^+$ we further have $\vr_{0,\Theta} \to \vr_0$ in any $L^p(\Omega)$, $1\leq p <\infty$, $\vu_{0,\Theta} \to \vu_0$ in $L^2(\Omega;\R^3)$, $\eta_{0,\Theta} \to \eta_0$ in $L^2(\Omega)$ and $\tn{T}_{0,\Theta} \to \tn{T}_0$ in $L^2(\Omega;\R^{3\times 3})$.

We now take $\alpha >0$ and instead of \eqref{01a}--\eqref{04a} we consider, indeed, in the weak sense, the following problem.
\begin{equation}
\label{01a-alpha}
 \pder{\vr}{t} + \Div (\vr \vu) = 0,
\end{equation}
\vspace{-13pt}
\begin{equation}
\begin{aligned}
\label{02a-alpha}
\pder{ (\vr\vu)}{t} + \Div (\vr \vu \otimes \vu) +\nabla p(\vr)  - \Div \SSS(\nabla_x \vu) &=\Div \big(\TT - (kL\eta  + \de\, \eta^2)\,\II\, \big)  \\
& +  \vr\, \ff -\frac{\alpha}{2} \nabla {\rm Tr}\, (\log \tn{T}),
\end{aligned}
\end{equation}
\vspace{-13pt}
\begin{equation}
\label{03a-alpha}
\pder{\eta}{t} + \Div (\eta \vu) = \e \Delta_x \eta,
\end{equation}
\vspace{-13pt}
\begin{equation}
\label{04a-alpha}
\pder{ \TT}{t} + {\rm Div} (\vu\,\TT) - \left(\nabla \vu \,\TT + \TT\, \nabla^{\rm T} \vu \right) = \e \Delta \TT + \frac{k}{2\lambda}(\eta +\alpha)  \,\II - \frac{1}{2\lambda} \TT,
\end{equation}
with the same boundary condition \eqref{boundary} and the initial conditions $(\vr_{0,\Theta},\vu_{0,\Theta}, \eta_{0,\Theta}, \tn{T}_{0,\alpha,\Theta})$, where $\tn{T}_{0,\alpha,\Theta} = \tn{T}_{0,\Theta}+ \alpha \tn{I}$.

Next we consider three further levels of approximation. First, we consider Galerkin approximation for the velocity. We take $\{\vc{w}_i\}_{i=1}^\infty$ an orthonormal basis in $L^2(\Omega;\R^3)$ which is also a basis in $W^{1,2}_0(\Omega;\R^3)$ formed by eigenfunctions of the Lam\'e system
$$
-\Delta \vc{w}_i -\nabla \Div \vc{w}_i = \lambda_i \vc{w}_i
$$
with homogeneous Dirichlet boundary conditions (for the Navier boundary conditions we can use the conditions $\vc{w}_i\cdot \vc{n} =0$ and $(\nabla \vc{w}_i + \Div \vc{w}_i \tn{I})\vc{n} \cdot \vcg{\tau} = 0$, $\vcg{\tau}$ being the tangent vector to $\partial \Omega$). We denote $$
X_n = {\rm Lin}\, \{\vc{w}_1,\vc{w}_2\dots,\vc{w}_n\}
$$
and look for $\vc{u}_n \in AC([0,T];X_n)$ with $\vc{u}_n(0,\cdot) = P_n(\vc{u}_{0,\Theta})$, $P_n$ being the orthogonal projection from $L^2(\Omega;\R^3)$ to $X_n$, such that 
\begin{equation}
\label{01a-n}
\int_0^t \int_{\R^3} \Big(\vr_n \pder{\psi}{t} + \vr_n\vu_n\cdot \nabla \psi\Big) \dx\dtau = \int_{\R^3} (\vr_n \psi)(t,x)\dx - \int_{\R^3}\vr_{0,\Theta}(x) \psi (0,x) \dx, 
\end{equation}
for all $\psi \in C^\infty_0([0,T]\times \R^3)$, where $\vr_n$ and $\vu_n$ are extended by zero outside of $\Omega$,
\begin{equation}
\begin{aligned}
\label{02a-n}
&\int_0^t \int_\Omega \big(\Lambda (\Div \vcg{\varphi})-\Lambda(\Div \vu_n)\big) \dx \dtau \geq \frac 12 \int_\Omega \big((\vr_n|\vu_n|^2) (t,x) -(\vr_{0,\Theta} |P_n(\vu_{0,\Theta})|^2)(x) \big)\dx \\
&-\int_\Omega (\vr_n\vu_n\cdot \vcg{\varphi})(t,x) \dx   + \int_\Omega (\vr_{0,\Theta} P_n(\vu_{0,\Theta}))(x) \cdot\vcg{\varphi}(0,x)\dx \\
&+ \int_0^t \int_\Omega \Big(\vr_n \vu_n \cdot \pder{\vcg{\varphi}}{t} + \vr_n(\vu_n\otimes \vu_n):\nabla \vcg{\varphi} \Big)\dx\dtau \\
&+ \int_0^t \int_{\Omega} \big(2\mu_0 (1+ |\tn{D}^d(\vu_n)|^2)^{\frac{r-2}{r}}\tn{D}^d(\vu_n) : \tn{D}^d(\vu_n-\vcg{\varphi}) + p(\vr_n) \Div (\vcg{\varphi} -\vu_n)\big) \dx\dtau \\
&+ \int_\Omega \Big( \frac{\alpha}{2} {\rm Tr}\, (\log \tn{T}_n)+ kL\eta_n + \zeta\, \eta_n^2\Big) \Div (\vcg{\varphi}-\vu_n)\dx  
-  \int_\Omega \Big( \tn{T}_n :\nabla (\vcg{\varphi}-\vu_n) - \vr_n\, \ff\cdot (\vcg{\varphi}-\vu_n)\Big)\dx,
\end{aligned}
\end{equation}
\vspace{-13pt}
\begin{equation}
\label{03a-n}
\pder{\eta_n}{t} + \Div (\eta_n \vu_n) = \e \Delta \eta_n,
\end{equation}
where $\pder{\eta_n}{\vc{n}} = 0$ at the boundary of $\Omega$ and $\eta_n(0,x) = \eta_{0,\Theta}(x)$,
\begin{equation}
\label{04a-n}
\pder{ \TT_n}{t} + {\rm Div} (\vu_n \TT_n) - \left(\nabla \vu_n \TT_n + \TT_n \nabla^{\rm T} \vu_n \right) + \frac{1}{2\lambda} \TT_n = \e \Delta \TT_n + \frac{k}{2\lambda}(\eta_n +\alpha)  \II,
\end{equation} 
where $\pder{\TT_n}{\vc{n}} = \tn{O}$ at the boundary of $\Omega$ and $\TT_{n}(0,x) = \TT_{0,\alpha,\Theta}(x)$. Above, equation \eqref{02a-n} is considered for all $\vcg{\varphi} \in C^1([0,T];X_n)$. 
The next step consists in regularization and symmetrization of the extra stress tensor. We take $\sigma >0$ (sufficiently small, in particular, $\sigma < \max\{\alpha, \Theta\}$) and define
$$
\chi_\sigma (s):= \max\{\sigma, s\}, \quad s\in \R.
$$
We now replace most of the terms containing $\tn{T}$ by $\chi_\sigma(\tn{T}^S)$, where we use the matrix calculus for symmetric matrices from Subsection \ref{s_3.1}. The symmetrization of the stress tensor $\tn{T}$ is defined as follows
$$
\tn{T}^S : = \frac 12 (\tn{T} + \tn{T}^T). 
$$
We therefore consider 
\begin{equation}
\label{01a-n-sigma}
\int_0^t \int_{\R^3} \Big(\vr_{n,\sigma} \pder{\psi}{t} + \vr_{n,\sigma}\vu_{n,\sigma}\cdot \nabla \psi\Big) \dx\dtau = \int_{\R^3} (\vr_{n,\sigma} \psi)(t,x)\dx - \int_{\R^3}\vr_{0,\Theta}(x) \psi (0,x) \dx, 
\end{equation}
for all for all $\psi \in C^\infty_0([0,T]\times \R^3)$, where the density and velocity are extended by zero outside of $\Omega$,
\begin{equation}
\begin{aligned}
\label{02a-n-sigma}
&\int_0^t \int_\Omega \big(\Lambda (\Div \vcg{\varphi})-\Lambda(\Div \vu_{n,\sigma})\big) \dx \dtau \geq \frac 12 \int_\Omega \big((\vr_{n,\sigma}|\vu_{n,\sigma}|^2) (t,x) -(\vr_{0,\Theta} |P_n(\vu_{0,\Theta})|^2)(x) \big)\dx \\
&-\int_\Omega (\vr_{n,\sigma}\vu_{n,\sigma}\cdot \vcg{\varphi})(t,x) \dx   + \int_\Omega (\vr_{0,\Theta} P_n(\vu_{0,\Theta}))(x) \cdot\vcg{\varphi}(0,x)\dx \\
&+ \int_0^t \int_\Omega \Big(\vr_{n,\sigma} \vu_{n,\sigma} \cdot \pder{\vcg{\varphi}}{t} + \vr_{n,\sigma}(\vu_{n,\sigma}\otimes \vu_{n,\sigma}):\nabla \vcg{\varphi} \Big)\dx\dtau \\
&+ \int_0^t \int_{\Omega} \big(2\mu_0 (1+ |\tn{D}^d(\vu_{n,\sigma})|^2)^{\frac{r-2}{2}}\tn{D}^d(\vu_{n,\sigma}) : \tn{D}^d(\vu_{n,\sigma}-\vcg{\varphi}) + p(\vr_{n,\sigma}) \Div (\vcg{\varphi} -\vu_{n,\sigma})\big) \dx\dtau \\
& =\int_\Omega \Big( \frac{\alpha}{2} {\rm Tr}\, (\log \chi_\sigma(\tn{T}^S_{n,\sigma}))+ kL\eta_{n,\sigma} + \zeta\, \eta_{n,\sigma}^2\Big) \Div (\vcg{\varphi}-\vu_{n,\sigma})\dx \\  
&+  \int_\Omega \Big( -\chi_\sigma(\tn{T}^S_{n,\sigma}) :\nabla (\vcg{\varphi}-\vu_{n,\sigma}) + \vr_{n,\sigma}\, \ff\cdot (\vcg{\varphi}-\vu_{n,\sigma})\Big)\dx,
\end{aligned}
\end{equation}
\vspace{-13pt}
\begin{equation}
\label{03a-n-sigma}
\pder{ \eta_{n,\sigma}}{t} + \Div (\eta_{n,\sigma} \vu_{n,\sigma}) = \e \Delta \eta_{n,\sigma},
\end{equation}
\vspace{-13pt}
\begin{equation}
\begin{aligned}
\label{04a-n-sigma}
&\pder{ \TT_{n,\sigma}}{t} + {\rm Div} (\vu_{n,\sigma}\,\chi_\sigma(\tn{T}^S_{n,\sigma})) - \left(\nabla \vu_{n,\sigma} \,\chi_\sigma(\tn{T}^S_{n,\sigma}) + \chi_\sigma(\tn{T}^S_{n,\sigma})\, \nabla^{\rm T} \vu_{n,\sigma} \right) \\
&= \e \Delta \TT_{n,\sigma} + \frac{k}{2\lambda}(\eta_{n,\sigma} +\alpha)  \,\II - \frac{1}{2\lambda} \chi_\sigma(\tn{T}^S_{n,\sigma}),
\end{aligned}
\end{equation}
where again, equation \eqref{02a-n-sigma} is considered for all $\vcg{\varphi} \in C^1([0,T];X_n)$, the quantities $\TT_{n,\sigma}$ and $\eta_{n,\sigma}$ satisfy the homogeneous Neumann boundary conditions and the same initial condition as in the previous step. 
The last approximation level includes regularization of the potential $\Lambda$. We take $\delta >0$ sufficiently small ($\delta <\frac 1b$) and consider
$$
\Lambda_\delta (z) := \left\{
\begin{array}{rl} \Lambda (z) \quad & |z| \leq \frac{1}{b} -\delta \\[3pt]
\Lambda(\frac 1b-\delta) + \Lambda'(\frac 1b-\delta)(z-\frac 1b+\delta) \quad & z> \frac 1b-\delta \\[3pt]
\Lambda(-\frac 1b+\delta) + \Lambda'(-\frac 1b+\delta)(z+\frac 1b-\delta) \quad & z< -\frac 1b+\delta.
\end{array}
\right.
$$
Our problem we consider on this last level of approximation reads as follows
\begin{equation}
\label{01a-n-sigma-delta}
\int_0^t \int_{\R^3} \Big(\vr_{n,\sigma,\delta} \pder{\psi}{t} + \vr_{n,\sigma,\delta}\vu_{n,\sigma,\delta}\cdot \nabla \psi\Big) \dx\dtau = \int_{\R^3} (\vr_{n,\sigma,\delta} \psi)(t,x)\dx - \int_{\R^3}\vr_{0,\Theta}(x) \psi (0,x) \dx
\end{equation}
for all for all $\psi \in C^\infty_0([0,T]\times \R^3)$, with the same zero extension of both the density  and velocity,
\begin{equation}
\begin{aligned}
\label{02a-n-sigma-delta}
&\int_\Omega \Big(\pder{(\vr_{n,\sigma,\delta}\vu_{n,\sigma,\delta})}{t}\cdot \vcg{\varphi} - (\vr_{n,\sigma,\delta} \vu_{n,\sigma,\delta} \otimes \vu_{n,\sigma,\delta}):\nabla \vcg{\varphi} - p(\vr_{n,\sigma,\delta}) \Div \vcg{\varphi}\Big)\dx   \\ 
&+ \int_\Omega \Big( 2\mu_0 (1+ |\tn{D}^d(\vu_{n,\sigma,\delta})|^2)^{\frac {r-2}{2}} \tn{D}^d(\vu_{n,\sigma,\delta}): \nabla \vcg{\varphi} + \Lambda_\delta'(\Div \vu_{n,\sigma,\delta}) \Div \vcg{\varphi} \Big) \dx \\
&= \int_\Omega \Big( \frac{\alpha}{2}  {\rm Tr}\, (\log \chi_\sigma(\tn{T}^S_{n,\sigma,\delta}) + kL\eta_{n,\sigma,\delta} + \zeta\, \eta_{n,\sigma,\delta}^2\Big) \Div \vcg{\varphi} \dx  
+  \int_\Omega \big(\vr_{n,\sigma,\delta}\, \ff\cdot \vcg{\varphi}-\chi_\sigma(\tn{T}^S_{n,\sigma,\delta}) :\nabla \vcg{\varphi} \big)\dx,
\end{aligned}
\end{equation}
\vspace{-13pt}
\begin{equation}
\label{03a-n-sigma-delta}
\pder{ \eta_{n,\sigma,\delta}}{t} + \Div (\eta_{n,\sigma,\delta} \vu_{n,\sigma,\delta}) = \e \Delta \eta_{n,\sigma,\delta},
\end{equation}
\vspace{-13pt}
\begin{equation}
\begin{aligned}
\label{04a-n-sigma-delta}
&\pder{\TT_{n,\sigma,\delta}}{t} + {\rm Div} (\vu_{n,\sigma,\delta}\,\chi_\sigma(\tn{T}^S_{n,\sigma,\delta})) - \left(\nabla \vu_{n,\sigma,\delta} \,\chi_\sigma(\tn{T}^S_{n,\sigma,\delta}) + \chi_\sigma(\tn{T}^S_{n,\sigma,\delta})\, \nabla^{\rm T} \vu_{n,\sigma,\delta} \right) \\
&= \e \Delta \TT_{n,\sigma,\delta} + \frac{k}{2\lambda}(\eta_{n,\sigma,\delta} +\alpha)  \,\II - \frac{1}{2\lambda} \chi_\sigma(\tn{T}^S_{n,\sigma,\delta}).
\end{aligned}
\end{equation} 
The equations are considered with the corresponding initial and boundary condition as above. Equality \eqref{02a-n-sigma-delta} holds now a.e. in $(0,T)$ for all $\vcg{\varphi} \in X_n$.
In what follows, we first construct a global in time solution to \eqref{01a-n-sigma-delta}--\eqref{04a-n-sigma-delta} and then subsequently pass with $\delta \to 0^+$, $\sigma \to 0^+$, $n\to \infty$, $\Theta \to 0^+$ and finally $\alpha \to 0^+$.

\subsection{Construction of approximate solution}
\label{s_4.1}

We now fix $\sigma$, $\delta$, $\alpha$ and $\Theta$ positive sufficiently small (recall that $\sigma < \max\{\alpha, \Theta\}$), $n \in \tn{N}$ and construct solutions to \eqref{01a-n-sigma-delta}--\eqref{04a-n-sigma-delta}. Our construction will be performed with the help of the following fixed-point argument of Schauder's type (the proof can be found, e.g., in \cite[Section 9.2 Theorem 4]{EVANS})

\begin{theorem} \label{Schauder}
Let $T$: $X\to X$ be a continuous and compact operator, $X$ is a Banach space. Let for all $s \in [0,1]$ the fixed points $s T(u) =u$ be bounded. Then the operator $T$ possesses at least one fixed point in $X$.
\end{theorem}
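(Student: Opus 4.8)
The plan is to recognise this as the Leray--Schauder alternative and to deduce it from Schauder's fixed point theorem (every continuous self-map of a closed bounded convex subset of a Banach space whose image is relatively compact has a fixed point), which in turn rests on Brouwer's theorem via finite-dimensional approximation. The a priori bound on the homotopy $u=sT(u)$ is exactly the ingredient that converts Schauder's statement, which needs a self-map of a fixed ball, into a statement about $T$ itself.

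Concretely, I would first extract from the hypothesis a constant $M>0$ such that $\|u\|\le M$ whenever $u=sT(u)$ for some $s\in[0,1]$, and then fix any $R>M$. On the closed ball $B=\{x\in X:\|x\|\le R\}$ I would introduce the radial retraction $\varrho\colon X\to B$, with $\varrho(x)=x$ for $\|x\|\le R$ and $\varrho(x)=Rx/\|x\|$ for $\|x\|>R$; this map is continuous on all of $X$. The composition $S:=\varrho\circ T$ maps $B$ continuously into itself, and since $T$ sends bounded sets into relatively compact sets while $\varrho$ is continuous, $S(B)$ is relatively compact. Schauder's theorem then produces $u\in B$ with $u=\varrho(T(u))$.

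The final step upgrades this to a genuine fixed point of $T$. If $\|T(u)\|\le R$, then $\varrho$ fixes $T(u)$ and $u=T(u)$, as desired. Otherwise $\|T(u)\|>R$, so $u=\varrho(T(u))=sT(u)$ with $s:=R/\|T(u)\|\in(0,1)$, which forces $\|u\|=R$; but the a priori bound gives $\|u\|\le M<R$, a contradiction. Hence the bad alternative cannot occur and $u$ is a fixed point of $T$.

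Essentially all the real content is packaged into Schauder's theorem, whose proof I would, if needed, reconstruct by taking a finite $\varepsilon$-net of the compact set $\overline{S(B)}$, forming the associated Schauder projection onto the convex hull of the net (a finite-dimensional polytope contained in $B$ by convexity), applying Brouwer to the resulting self-map to get $u_\varepsilon$ with $\|u_\varepsilon-S(u_\varepsilon)\|<\varepsilon$, and passing to the limit $\varepsilon\to0^+$ using compactness of $\overline{S(B)}$. Within the reduction itself the only points that require care are the strictness $M<R$ (so that a fixed point of $S$ lying on $\partial B$ is automatically a fixed point of $T$) and the continuity of the radial retraction in a general Banach space; both are routine, so in practice I would simply invoke \cite[Section~9.2, Theorem~4]{EVANS}.
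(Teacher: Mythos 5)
Your proposal is correct and coincides with the standard proof of Schaefer's (Leray--Schauder) fixed point theorem via radial retraction onto a large ball plus Schauder's theorem; the paper itself gives no proof and simply cites \cite[Section~9.2, Theorem~4]{EVANS}, which argues exactly as you do. The one delicate point — choosing $R$ strictly larger than the a priori bound $M$ so that the boundary case $u=sT(u)$ with $\|u\|=R$ is excluded — is handled correctly.
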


We denote $X_n := {\rm Lin}\{\vc{w}_1,\dots,\vc{w}_n\}$ and consider the space $X:= C([0,T];X_n)$. Note that $X_n$ is a finite dimensional space and thus all norms in this space are equivalent. Note further that $X_n \subset W^{2,q}(\Omega) \hookrightarrow C^{1,\beta}(\overline{\Omega})$ (our $q$ can be arbitrary finite; thus we can work with any $0\leq \beta <1$). 

We define the operator $T$ as follows. We take $\vc{w} \in C([0,T];X_n)$ and define $\vc{u} := T(\vc{w})$, where $\vc{w} \mapsto (\vr,\eta) \mapsto \tn{T} \mapsto \vc{u}$. The functions $(\vr,\eta)$ are solutions to
\begin{equation} \label{FP_1}
\int_0^t \int_\Omega \Big(\varrho \pder{\psi}{t} + \vr\vc{w}\cdot \nabla \psi \Big) \dx \,{\rm d}\tau = \int_\Omega \vr(t,\cdot)\psi (t,\cdot)\dx -\int_\Omega \vr_{0,\Theta} \psi(0,\cdot) \dx, 
\end{equation}
\vspace{-8pt}
\begin{equation} \label{FP_2}
\pder{\eta}{t} + \Div(\eta\vc{w}) = \varepsilon \Delta \eta,
\end{equation}
$\eta(0,x) =\eta_{0,\Theta}(x)$, $\pder{\eta}{\vc{n}} =0$ at the boundary of  $\Omega$. Further
\begin{equation} \label{FP_3}
\begin{aligned}
\pder{\tn{T}}{t} + {\rm Div} (\vc{w} \chi_\sigma(\tn{T}^S)) - (\nabla \vc{w} \chi_\sigma(\tn{T}^S) + \chi_\sigma(\tn{T}^S)\nabla^{T}\vc{w}) = \varepsilon \Delta \tn{T} + \frac{k}{2\lambda}(\eta +\alpha)\tn{I} -\frac{1}{2\lambda} \chi_\sigma(\tn{T}^S),
\end{aligned}
\end{equation}
where $\tn{T}(0,x)= \tn{T}_{0,\alpha,\Theta}(x)$, $\pder{\tn{T}}{\vc{n}} = \tn{O}$ at the boundary of $\Omega$, and finally we look for $\vc{u} = \sum_{i=1}^n c_i^n(t) \vc{w}_i(x)$, $\vc{u}(0,x) = P_n(\vc{u}_{0,\Theta})(x)$, where we have for any $\vcg{\varphi} \in X_n$
\begin{equation} \label{FP_4}
\begin{aligned}
&\int_\Omega\Big(\pder{(\vr\vu)}{t} -\vr (\vu\otimes\vc{w}):\nabla \vcg{\varphi} -p(\vr) \Div\vcg{\varphi}\Big) \dx \\
&+ \int_\Omega \Big(2\mu_0(1+ |\tn{D}^d(\vc{w})|^{2})^{\frac{r-2}{2}}{\tn D}^d(\vu) :{\tn D}^d (\vcg{\varphi})  + \Lambda_{\delta}'(\Div\vc{w}) \Div \vcg{\varphi}\Big)\dx \\
&= \int_\Omega \Big(\frac {\alpha}{2} {\rm Tr}\, (\log\chi_\sigma (\tn{T}^S)) + kL\eta +\zeta \eta^2  \Big) \Div \vcg{\varphi} \dx + \int_\Omega (\vr \vc{f}\cdot \vcg{\varphi} -\chi_\sigma(\tn{T^S}):\nabla \vcg{\varphi})\dx.
\end{aligned}
\end{equation}
It is easy to see that for $\vc{w}\in C([0,T];X_n)$ the unique solution to \eqref{FP_1} satisfies
$$
\vr \in L^\infty((0,T)\times \Omega), \quad \vr \geq C_0(T) >0, \quad \text{ and } |\nabla \vr|, \pder{\vr}{t} \in L^\infty(0,T; L^2(\Omega)).
$$
Further, the unique solution to \eqref{FP_2}
$$
\eta \in L^\infty((0,T)\times \Omega), \quad \eta \geq C_0(T) >0, \quad \text{ and } |\nabla^2 \eta|, \pder{\eta}{t} \in L^q((0,T)\times \Omega) \text{ for all } q\in [1,\infty).
$$ 
Next, the unique solution to \eqref{FP_3} $\tn{T}$ is a symmetric tensor and
$$
\tn{T} \in L^\infty((0,T)\times \Omega;\R^{3\times 3}), \quad \text{ and } |\nabla^2 \tn{T}|, \Big|\pder{\tn{T}}{t}\Big| \in L^q((0,T)\times \Omega) \text{ for all } q\in [1,\infty).
$$
Finally, for given $\vc{w}$ and $\vr(\vc{w}), \eta(\vc{w}), \tn{T}(\eta(\vc{w}),\vc{w})$, solutions to \eqref{FP_1}--\eqref{FP_3}, there exists unique $\vc{u}$ solution to \eqref{FP_4}. Since $\vr$ is bounded away from zero and the time derivative is square integrable, the operator $T$ is clearly compact as a mapping from $C([0,T];X_n)$ into itself. It is also not difficult to verify that the mapping is further continuous. To be able to apply Theorem \ref{Schauder}, it remains to verify that the fixed points $sT(\vu)=\vu$ are bounded for all $s\in [0,1]$. 

To this aim, we replace in \eqref{FP_4} $\vu \mapsto \frac{\vu}{s}$ as well as $\vc{w} \mapsto \vu$ and  use as a test function in  this equation the  function $\vu$. We combine the resulted equality with (renormalized versions of) equations \eqref{FP_1} and \eqref{FP_2} and obtain (recall that $P(\vr) = \vr \psi(\vr)$ is the pressure potential)
\begin{equation} \label{E1}
\begin{aligned}
&\frac{{\rm d}}{{\rm d}t} \int_\Omega \Big(\frac 12 \vr|\vu|^2 + s P(\vr) + s(kL \eta \log \eta  + \zeta \eta^2) \Big)\dx \\
& + s \int_\Omega \Big(\varepsilon\Big( \frac{kL}{\eta} + 2 \zeta\Big)|\nabla \eta|^2 + \Lambda_\delta'(\Div \vc{u})\Div \vu\Big)\dx + \int_\Omega 2\mu_0(1+ |\tn{D}^d(\vu)|^2)^{\frac {r-2}{2}}|\tn{D}^d(\vu)|^2 \dx \\
& = s \int_\Omega \Big( \frac \alpha 2 {\rm Tr}\,\log (\chi_\sigma(\tn{T}^S)) \Div \vu + \vr\vc{f}\cdot \vu -\chi_\sigma(\tn{T}^S):\nabla \vu \Big)\dx.
\end{aligned}
\end{equation}
Similarly as in the part devoted to formal a priori estimates, we apply the matrix trace operator on \eqref{FP_3} and integrate over $\Omega$. It yields (note that the solution $\tn{T}$ is symmetric and remains so throughout all limit passages considered below)
\begin{equation} \label{E2}
\frac{{\rm d}}{{\rm d}t} \int_\Omega {\rm Tr}\, \tn{T} \dx + \frac{1}{2\lambda}\int_\Omega {\rm Tr}\, (\chi_\sigma(\tn{T})) \dx = \int_\Omega \Big(2\chi_\sigma (\tn{T}) :\nabla \vu  + \frac {3k}{2\lambda} (\eta +\alpha)\Big)\dx.
\end{equation}
We need one extra estimate since we still do not know that the tensor $\tn{T}$ is positive definite. Following \cite{Barrett-Lu-Suli} we obtain a logarithmic estimate for the trace of the tensor $\tn{T}$.

We define
$$
G_\sigma(s) := \left\{
\begin{array}{rl}
\log s, \quad & \text{ if } s\geq \sigma \\
\frac{s}{\sigma}+ \log \sigma -1, \quad & \text{ if } s< \sigma.
\end{array}
\right.
$$
We easily verify that $G_\sigma'(s) = [\chi_\sigma(s)]^{-1}$ (inverse function). Thus also $G'_\sigma(\tn{P})= [\chi_\sigma(\tn{P})]^{-1}$ for any real symmetric matrix $\tn{P}$. We recall two results; their proofs can be found in \cite{Barrett-Lu-Suli}:
\begin{itemize}
\item Let $g\in C^{1,\gamma}(\tn{R})$, $0<\gamma \leq 1$ be a convex (or concave) function and $\tn{P} \in W^{1,2}(0,T;\tn{R}^{d\times d})$ be symmetric. Then the mapping $t\in (0,T) \mapsto g(\tn{P}(t)) \in \tn{R}^{d \times d}$ is differentiable a.e. and it holds
$$
\frac{{\rm d}{\rm Tr} (g(\tn{P}))}{{\rm d}t} = {\rm Tr}\, \Big(g'(\tn{P}) \frac{{\rm d}\tn{P}}{{\rm d}t}\Big) = \frac{{\rm d}\tn{P}}{{\rm d}t} : g'(\tn{P}).
$$
\item Let $\sigma >0$ and $\tn{P} \in C([0,T]; W^{1,2}(\Omega;\R^{d\times d}))$ be a symmetric matrix such that $\Delta \tn{P} \in L^2(0,T;L^{2}(\Omega;\R^{d\times d}))$ satisfying $\pder{\tn{P}}{\vc{n}} =0$ at $\partial \Omega$. Then $[\chi_\sigma(\tn{P})]^{-1} \in L^\infty(0,T;W^{1,2}(\Omega;\tn{R}^{d\times d}))$ and it holds
$$
\int_\Omega \Delta \tn{P} :[\chi_{\sigma}(\tn{P})]^{-1} \dx = -\int_\Omega \nabla \tn{P}:\nabla [\chi_{\sigma}(\tn{P})]^{-1}\dx \geq \frac 1d \int_\Omega |\nabla {\rm Tr}\,(\log(\chi_\sigma(\tn{P})))|^2\dx
$$
a.e. in $(0,T)$.
\end{itemize}

Based on these results, we have in our case
$$
\begin{aligned}
\pder{\tn{T}}{t}: G_\sigma '(\tn{T}) &= \pder{}{t} \big({\rm Tr}(G_\sigma(\tn{T}))\big) \\
\int_\Omega \Delta \tn{T} : G'_\sigma(\tn{T}) \dx &= -\int_\Omega \nabla \tn{T}:: \nabla [\chi_\sigma(\tn{T})]^{-1} \dx \geq \frac 13 \int_\Omega |\nabla {\rm Tr}\,(\log(\chi_\sigma(\tn{T})))|^2\dx.
\end{aligned}
$$

Whence, it holds
\begin{equation} \label{E3}
\begin{aligned}
\frac{{\rm d}}{{\rm d}t} \int_\Omega {\rm Tr }\, (G_\sigma (\tn{T})) \dx &\geq \int_\Omega {\rm Tr}\, (\log \chi_{\sigma} (\tn{T})) \Div \vu \dx + \frac{\varepsilon}{3} \int_\Omega |\nabla {\rm Tr}\, (\log \chi_{\sigma}(\tn{T}))|^2 \dx \\
& +\frac{k}{2\lambda} \int_\Omega(\eta+\alpha) {\rm Tr}\, [\chi_\sigma(\tn{T})]^{-1} \dx - \frac {3}{2\lambda} |\Omega|.
\end{aligned}
\end{equation}
Above, we used 
$$
{\rm Div}\, (\vu \chi_\sigma(\tn{T})): G'_\sigma(\tn{T})= \vu \cdot \nabla {\rm Tr}\, (\log(\chi_\sigma(\tn{T}))):G'_\sigma(\tn{T}) + 3 \Div \vu
$$
and
$$
-(\nabla \vu \chi_\sigma(\tn{T}) + \chi_\sigma(\tn{T}) \nabla^T \vu)= -2\Div \vu,
$$
see \cite[Section 7]{Barrett-Lu-Suli}. 
We now multiply \eqref{E3} on $-\frac{\alpha s}{2}$ and sum together with \eqref{E1} and \eqref{E2} multiplied by $\frac s2$. We end up with
\begin{equation} \label{E4}
\begin{aligned}
&\frac{{\rm d}}{{\rm d}t} \int_\Omega  \Big( \frac 12 \vr|\vu|^2 + s P(\vr) + s (kL\eta \log \eta +\zeta\eta^2)\Big)\dx + \frac s2  \frac{{\rm d}}{{\rm d}t} \int_\Omega {\rm Tr}\, \big(\tn{T}-\alpha G_\sigma(\tn{T})\big)\dx \\
&+ \varepsilon s \int_\Omega \Big(\frac{kL}{\eta} + 2\zeta \eta\Big) |\nabla \eta|^2 \dx + \frac{\alpha k}{4\lambda}s \int_\Omega (\eta +\alpha){\rm Tr}\, [\chi_\sigma(\tn{T})]^{-1} \dx \\
& + \int_\Omega \big(2\mu_0 (1+|\tn{D}^d(\vu)|^2)^{\frac{r-2}{2}} |\tn{D}^d(\vu)|^2 + s\Lambda_\delta'(\Div \vu) \Div \vu\big)\dx +\frac{s}{4\lambda}  \int_\Omega {\rm Tr}\, (\chi_\sigma(\tn{T}))\dx \\
& + \frac{\alpha \varepsilon s}{6}  \int_\Omega |\nabla {\rm Tr}\, (\log \chi_{\sigma}(\tn{T}))|^2 \dx \leq s \int_\Omega \vr\vc{f}\cdot \vu \dx + \frac{3ks}{4\lambda} \int_\Omega (\eta+\alpha) \dx + \frac{3\alpha s}{4\lambda} |\Omega|.
\end{aligned}
\end{equation}
To conclude, following \cite{Barrett-Lu-Suli} we recall that since $\sigma <\alpha$, we have $s-\alpha G_\sigma(s) \geq \alpha -\alpha \log \alpha$ for all $s\in \R$; it is enough to recall the definition of $G_\sigma(s)$ and consider three cases: $s\leq \sigma <\alpha$, $\sigma <s\leq \alpha$ and $\sigma <\alpha \leq s$. We may also, without loss of generality, assume $\alpha \leq 1$ (later on, we let $\alpha \to 0^+$).
Then
$$
{\rm Tr}\, (\tn{T} -\alpha G_\sigma(\tn{T})) = \sum_{j=1}^3 \big(\lambda^{j} -\alpha G_\sigma(\lambda^{j})\big) \geq 3 (\alpha -\alpha \log \alpha),
$$
where $\lambda^{j}$ are the (real) eigenvalues of the symmetric matrix $\tn{T}$. 

We may therefore conclude our estimates. We get, after integrating over time
\begin{equation} \label{E5}
\begin{aligned}
& \int_\Omega  \Big( \frac 12 \vr|\vu|^2 + s P(\vr) + s (kL\eta \log \eta +\zeta\eta^2)\Big) (t,\cdot)\dx +  \frac 12 s \int_\Omega {\rm Tr}\, (\tn{T} -\alpha G_\sigma(\tn{T}))(t,\cdot) \dx  \\
&+ \varepsilon s \int_0^t \int_\Omega \Big(\frac{kL}{\eta} + 2\zeta\Big) |\nabla \eta|^2 \dx \,{\rm d}\tau +\frac{\alpha k}{4\lambda}s \int_0^t\int_\Omega (\eta +\alpha){\rm Tr}\, [\chi_\sigma(\tn{T})]^{-1} \dx \,{\rm d}\tau  \\
& + \int_0^t\int_\Omega \big(2\mu_0 (1+|\tn{D}^d(\vu)|^2)^{\frac{r-2}{2}} |\tn{D}^d(\vu)|^2 + s\Lambda_\delta'(\Div \vu)\Div \vu \big)\dx \,{\rm d}\tau +\frac{1}{4\lambda} s \int_0^t \int_\Omega {\rm Tr}\, (\chi_\sigma(\tn{T}))\dx\,{\rm d}\tau  \\
& + \frac{\alpha \varepsilon}{6} s \int_0^t \int_\Omega |\nabla {\rm Tr}\, (\log \chi_{\sigma}(\tn{T}))|^2 \dx \,{\rm d}\tau \\
& \leq \int_\Omega  \Big( \frac 12 \vr_{0,\Theta}|\vu_{0,\Theta}|^2 + s P(\vr_{0,\Theta}) + s (kL\eta_{0,\Theta} \log \eta_{0,\Theta} +\zeta\eta_{0,\Theta}^2)\Big) \dx  \\
& + \frac s2  \int_\Omega {\rm Tr}\, (\tn{T}_{0,\alpha,\Theta} -\alpha G_\sigma(\tn{T}_{0,\alpha,\Theta})) \dx +s \int_0^t \int_\Omega \vr\vc{f}\cdot \vu \dx\,{\rm d}\tau + \frac{3k}{4\lambda}s \int_0^t \int_\Omega (\eta+\alpha) \dx \,{\rm d}\tau + \frac{3\alpha T}{4\lambda}s |\Omega|.
\end{aligned}
\end{equation}
The above inequality provides us an estimate of $\vu$ in $C([0,T];L^2(\Omega;\R^3))$ which allows us to claim existence of global in time solution to our approximate problem. The last term on the first line can be bounded from below by  $\frac 32 s  (\alpha-\alpha \log \alpha) |\Omega|$ which ensures its positivity since it is enough to consider $0<\alpha \leq 1$.

\subsection{Estimates independent of $\delta$, limit passage $\delta \to 0^+$}
\label{s_4.2}

Recall that our solution we constructed in the previous subsection satisfies (we use in this section lower index $\delta$ since the aim is to get estimates independent of $\delta$ and to perform the limit passage $\delta \to 0^+$)

\begin{equation} \label{delta_1}
\int_0^t \int_{\R^3} \Big(\varrho_\delta \pder{\psi}{t} + \vr_\delta\vc{u}_\delta\cdot \nabla \psi \Big) \dx \,{\rm d}\tau = \int_{\R^3} \vr_\delta(t,\cdot)\psi (t,\cdot)\dx -\int_{\R^3} \vr_{0,\Theta} \psi(0,\cdot) \dx 
\end{equation}
for all $\psi \in C^\infty_0([0,T];\R^3)$,
\begin{equation} \label{delta_2}
\pder{\eta_\delta}{t} + \Div(\eta_\delta\vc{u}_\delta) = \varepsilon \Delta \eta_\delta,
\end{equation}
$\eta_\delta(0,x) =\eta_{0,\Theta}(x)$, $\pder{\eta_\delta}{\vc{n}} = 0$ at the boundary of $\Omega$. Further (recall that the tensor $\tn{T}$ is symmetric)
\begin{equation} \label{delta_3}
\begin{aligned}
\pder{\tn{T}_\delta}{t} + {\rm Div} (\vc{u}_\delta \chi_\sigma(\tn{T}_\delta)) - (\nabla \vc{u}_\delta \chi_\sigma(\tn{T}_\delta) + \chi_\sigma(\tn{T}_\delta)\nabla^{T}\vc{u}_\delta) = \varepsilon \Delta \tn{T}_\delta + \frac{k}{2\lambda}(\eta_\delta +\alpha)\tn{I} -\frac{1}{2\lambda} \chi_\sigma(\tn{T}_\delta),
\end{aligned}
\end{equation}
where $\tn{T}_\delta(0,x)= \tn{T}_{0,\alpha,\Theta}(x)$, $\pder{\tn{T}}{\vc{n}} = \tn{O}$ at the boundary of $\Omega$, and finally we  have $\vc{u}_\delta = \sum_{i=1}^n c_{i,\delta}^n(t) \vc{w}_i(x)$, $\vc{u}_\delta(0,x) = P_n(\vc{u}_{0,\Theta})(x)$, where we have for any $\vcg{\varphi} \in X_n$
\begin{equation} \label{delta_4}
\begin{aligned}
&\int_\Omega\Big(\pder{(\vr_\delta\vu_\delta)}{t}\cdot \vcg{\varphi} -\vr_\delta (\vu_\delta\otimes\vc{u}_\delta):\nabla \vcg{\varphi} -p(\vr_\delta) \Div\vcg{\varphi}\Big) \dx \\
&+ \int_\Omega \Big(2\mu_0(1+ |\tn{D}^d(\vc{u}_\delta)|^{2})^{\frac{r-2}{2}}{\tn D}^d(\vu_\delta) :{\tn D}^d (\vcg{\varphi})  + \Lambda_{\delta}'(\Div\vc{u}_\delta) \Div \vcg{\varphi}\Big)\dx \\
&= \int_\Omega \Big(\frac {\alpha}{2} {\rm Tr}\, (\log\chi_\sigma (\tn{T}_\delta)) + kL\eta_\delta +\zeta \eta_\delta^2  \Big) \Div \vcg{\varphi} \dx + \int_\Omega (\vr_\delta \vc{f}\cdot \vcg{\varphi} -\chi_\sigma(\tn{T}_\delta):\nabla \vcg{\varphi})\dx
\end{aligned}
\end{equation}
together with the energy inequality
\begin{equation} \label{delta_5}
\begin{aligned}
& \int_\Omega  \Big( \frac 12 \vr_\delta|\vu_\delta|^2 +  P(\vr_\delta) +  (kL\eta_\delta \log \eta_\delta +\zeta\eta_\delta^2)\Big) (t,\cdot)\dx + \frac 12  \int_\Omega {\rm Tr}\, (\tn{T}_\delta -\alpha G_\sigma(\tn{T}_\delta))(t,\cdot) \dx  \\
&+ \varepsilon  \int_0^t \int_\Omega \Big(\frac{kL}{\eta_\delta} + 2\zeta \Big) |\nabla \eta_\delta|^2 \dx \,{\rm d}\tau +\frac{\alpha k}{4\lambda} \int_0^t\int_\Omega (\eta_\delta +\alpha){\rm Tr}\, [\chi_\sigma(\tn{T}_\delta)]^{-1} \dx \,{\rm d}\tau  \\
& + \int_0^t\int_\Omega \big(2\mu_0 (1+|\tn{D}^d(\vu_\delta)|^2)^{\frac{r-2}{2}} |\tn{D}^d(\vu_\delta)|^2 + \Lambda_\delta(\Div \vu_\delta) \big)\dx \,{\rm d}\tau +\frac{1}{4\lambda}  \int_0^t \int_\Omega {\rm Tr}\, (\chi_\sigma(\tn{T}_\delta))\dx\,{\rm d}\tau  \\
& + \frac{\alpha \varepsilon}{6}  \int_0^t \int_\Omega |\nabla {\rm Tr}\, (\log \chi_{\sigma}(\tn{T}_\delta))|^2 \dx \,{\rm d}\tau \\
& \leq \int_\Omega  \Big( \frac 12 \vr_{0,\Theta}|P_n(\vu_{0,\Theta})|^2 +  P(\vr_{0,\Theta}) +  (kL\eta_{0,\Theta} \log \eta_{0,\Theta} +\zeta\eta_{0,\Theta}^2)\Big) \dx  \\
&  + \frac 12  \int_\Omega {\rm Tr}\, (\tn{T}_{0,\alpha,\Theta} -\alpha G_\sigma(\tn{T}_{0,\alpha,\Theta})) \dx +\int_0^t \int_\Omega \vr_\delta\vc{f}\cdot \vu_\delta \dx\,{\rm d}\tau + \frac{3k}{4\lambda} \int_0^t \int_\Omega (\eta_\delta+\alpha) \dx \,{\rm d}\tau + \frac{3\alpha T}{4\lambda} |\Omega|.
\end{aligned}
\end{equation}
Note that we used in the energy inequality the convexity of $\Lambda_\delta$; more precisely, since $\Lambda_\delta(0)=0$, we have $\Lambda'_\delta (\Div\vu_\delta) \Div \vu_\delta \geq \Lambda_\delta(\Div \vu_\delta)$. Recall also that the last term on the first line is non-negative. From \eqref{delta_5} we may get the following estimates independent of $\delta$:
\begin{equation} \label{delta_6}
\begin{aligned}
&\|\vr_\delta |\vu_\delta|^2\|_{L^\infty(0,T;L^1(\Omega))} + \|\eta_\delta\|_{L^\infty(0,T;L^2(\Omega))} + \|\nabla \eta_\delta\|_{L^2((0,T)\times \Omega)} + \|\nabla \eta_\delta^{\frac 12}\|_{L^2((0,T)\times \Omega)} \\
&+ \|\vu_\delta\|_{L^r(0,T;W^{1,r}(\Omega))} \leq C. 
\end{aligned}
\end{equation}
We further also have $\eta_\delta \geq C(T) >0$, $\vr_\delta\geq C(T) > 0$, $\|\vr_\delta\|_{L^\infty((0,T)\times \Omega)} \leq C$ and $\|\eta_\delta\|_{L^\infty((0,T)\times \Omega)} \leq C$ which follow from the fact that all spatial norms for the velocity are equivalent, therefore $\|\nabla \vu_\delta\|_{L^r(0,T;L^\infty(\Omega))} \leq C$ and thus we may use standard estimates of the continuity and regularized continuity equations (for $\vr_\delta$ and $\eta_\delta$, respectively). But then also $\vu_\delta$ is bounded in the time spaces cylinder, due to the bound of the kinetic energy. 

Using Lemma \ref{parabolic_Neumann}, we also have
\begin{equation} \label{delta_7}
\begin{aligned}
&\|\tn{T}_\delta\|_{L^\infty(0,T;W^{1,2}(\Omega))} + \|\tn{T}_\delta\|_{L^2(0,T;W^{2,2}(\Omega))} + \Big\|\pder{\tn{T}_\delta}{t}\Big\|_{L^2((0,T)\times \Omega)} \\
&+ \|\eta_\delta\|_{L^\infty(0,T;W^{1,2}(\Omega))} + \|\eta_\delta\|_{L^2(0,T;W^{2,2}(\Omega))} + \Big\|\pder{\eta_\delta}{t}\Big\|_{L^2((0,T)\times \Omega)} \leq C
\end{aligned}
\end{equation}
and therefore, due to the orthogonality of the basis function, also
\begin{equation} \label{delta_8}
\Big\|\pder{\vu_\delta}{t}\Big\|_{L^1(0,T;L^2(\Omega))} \leq C.
\end{equation}
Since by properties of the continuity equation (cf. \cite{DiPerna-Lions}) we also get the strong convergence of the sequence of density in $C([0,T];L^q(\Omega))$ for any $q<\infty$, we may easily perform the limit passages ("removing $\delta$") in \eqref{delta_1}--\eqref{delta_3}. However, before passing to the limit in the momentum equation, we perform small changes in the formulation. We in fact use in \eqref{delta_4} as test function $\vcg{\varphi}-\vu_\delta$ with $\vcg{\varphi} \in C^1([0,T];X_n)$. Combining it with the continuity equation for the density as well as using the convexity of the potential $\Lambda_\delta$ we end up with 
\begin{equation} \label{delta_9}
\begin{aligned}
\int_0^t &\int_\Omega \big(\Lambda_\delta(\Div \vcg{\varphi}) -\Lambda_\delta(\Div\vu_\delta)\big)\dx\dtau \geq \frac 12 \int_\Omega \vr_\delta |\vu_\delta|^2 (t,\cdot) \dx - \int_\Omega \vr_{0,\Theta}|\vu_{0,\Theta}|^2 \dx \\
& - \int_\Omega \vr_\delta \vu_\delta \cdot \vcg\varphi(t,\cdot)\dx + \int_\Omega \vr_{0,\Theta}\vu_{0,\Theta} \cdot \vcg{\varphi}(0,\cdot)\dx + \int_0^t \int_\Omega \Big(\vr_\delta \vu_\delta \cdot \pder{\vcg{\varphi}}{t} + \vr_\delta \vu_\delta \otimes \vu_\delta : \nabla \vcg{\varphi}\Big)\dx\dtau \\
&+ \int_0^t\int_\Omega 2\mu_0(1+ |\tn{D}^d(\vc{u}_\delta)|^{2})^{\frac{r-2}{2}}{\tn D}^d(\vu_\delta) :{\tn D}^d (\vu_\delta-\vcg{\varphi})  \dx\dtau + \int_0^t \int_\Omega p(\vr_\delta) \Div (\vcg{\varphi}-\vu_\delta)\dx\dtau \\
&+ \int_0^t \int_\Omega \Big(\frac {\alpha}{2} {\rm Tr}\, (\log\chi_\sigma (\tn{T}_\delta)) + kL\eta_\delta +\zeta \eta_\delta^2  \Big) \Div (\vcg{\varphi}-\vu_\delta) \dx\dtau  \\
&+\int_0^t\int_\Omega (\vr_\delta \vc{f}\cdot (\vcg{\varphi}-\vu_\delta) -\chi_\sigma(\tn{T}_\delta):\nabla (\vcg{\varphi}-\vu_\delta))\dx\dtau.
\end{aligned}
\end{equation}
Based on the estimates above combined with inequalities
$$
\begin{aligned}
\Lambda(\Div\vcg{\varphi}) &\geq \Lambda_\delta(\Div\vcg{\varphi}) \qquad \text{ provided } \delta \in (0,\tfrac 1b),\\
\liminf_{\delta \to 0^+} \Lambda_\delta (\Div\vu_\delta) &\geq \Lambda(\Div\vu) \text { provided } \vu_\delta \rightharpoonup \vu \text{ in } L^r(0,T; W^{1,r}(\Omega)),
\end{aligned}
$$
(for more details, see \cite{Feireisl-Liu-Malek}), we may pass to the limit (remove the lower index $\delta$ everywhere)  both in \eqref{delta_9} as well as in \eqref{delta_5}. The proof of the limit passage is finished.

\subsection{Estimates independent of $\sigma$, limit passage $\sigma \to 0^+$}
\label{s_4.3}

We now intend to pass to the limit $\sigma \to 0^+$. The limit passage is quite similar to the previous one with one exception. We namely lose the cut-off function which ensured us that we formally work with positive definite function. So we need to verify that the limit stress tensor $\tn{T}$ is positive definite.

Our system of equations reads as follows:
\begin{equation} \label{sigma_1}
\int_0^t \int_{\R^3} \Big(\varrho_\sigma \pder{\psi}{t} + \vr_\sigma\vc{u}_\sigma\cdot \nabla \psi \Big) \dx \,{\rm d}\tau = \int_{\R^3} \vr_\sigma(t,\cdot)\psi (t,\cdot)\dx -\int_{\R^3} \vr_{0,\Theta} \psi(0,\cdot) \dx 
\end{equation}
for all $\psi \in C^\infty_0([0,T]\times \R^3)$ and both the density and velocity are extended by zero outside of $\Omega$,
\begin{equation} \label{sigma_2}
\pder{\eta_\sigma}{t} + \Div(\eta_\sigma\vc{u}_\sigma) = \varepsilon \Delta \eta_\sigma,
\end{equation}
$\eta_\sigma(0,x) =\eta_{0,\Theta}(x)$, $\pder{\eta_\sigma}{\vc{n}} = 0$ at the boundary of $\Omega$. Further
\begin{equation} \label{sigma_3}
\begin{aligned}
\pder{\tn{T}_\sigma}{t} + {\rm Div} (\vc{u}_\sigma \chi_\sigma(\tn{T}_\sigma)) - (\nabla \vc{u}_\sigma \chi_\sigma(\tn{T}_\sigma) + \chi_\sigma(\tn{T}_\sigma)\nabla^{T}\vc{u}_\sigma) = \varepsilon \Delta \tn{T}_\sigma + \frac{k}{2\lambda}(\eta_\sigma +\alpha)\tn{I} -\frac{1}{2\lambda} \chi_\sigma(\tn{T}_\sigma),
\end{aligned}
\end{equation}
where $\tn{T}_\sigma(0,x)= \tn{T}_{0,\alpha,\Theta}(x)$, $\pder{\tn{T}}{\vc{n}} = \tn{O}$ at the boundary of $\Omega$, and finally we  have $\vc{u}_\sigma = \sum_{i=1}^n c_{i,\sigma}^n(t) \vc{w}_i(x)$, $\vc{u}_\sigma(0,x) = P_n(\vc{u}_{0,\Theta})(x)$, where for any $\vcg{\varphi} \in C([0,T];X_n)$
\begin{equation} \label{sigma_4}
\begin{aligned}
\int_0^t &\int_\Omega \big(\Lambda(\Div \vcg{\varphi}) -\Lambda(\Div\vu_\sigma)\big)\dx\dtau \geq \frac 12 \int_\Omega \vr_\sigma |\vu_\sigma|^2 (t,\cdot) \dx - \int_\Omega \vr_{0,\Theta}|\vu_{0,\Theta}|^2 \dx \\
& - \int_\Omega \vr_\sigma \vu_\sigma \cdot \vcg\varphi(t,\cdot)\dx + \int_\Omega \vr_{0,\Theta}\vu_{0,\Theta} \cdot \vcg{\varphi}(0,\cdot)\dx + \int_0^t \int_\Omega \Big(\vr_\sigma \vu_\sigma \cdot \pder{\vcg{\varphi}}{t} + \vr_\sigma \vu_\sigma \otimes \vu_\sigma : \nabla \vcg{\varphi}\Big)\dx\dt \\
&+ \int_0^t\int_\Omega 2\mu_0(1+ |\tn{D}^d(\vc{u}_\sigma)|^{2})^{\frac{r-2}{2}}{\tn D}^d(\vu_\sigma) :{\tn D}^d (\vu_\sigma-\vcg{\varphi})  \dx\dtau + \int_0^t \int_\Omega p(\vr_\sigma) \Div (\vcg{\varphi}-\vu_\sigma)\dx\dtau \\
&+ \int_0^t \int_\Omega \Big(\frac {\alpha}{2} {\rm Tr}\, (\log\chi_\sigma (\tn{T}_\sigma)) + kL\eta_\sigma +\zeta \eta_\sigma^2  \Big) \Div (\vcg{\varphi}-\vu_\sigma) \dx\dtau  \\
&+\int_0^t\int_\Omega (\vr_\sigma \vc{f}\cdot (\vcg{\varphi}-\vu_\sigma) -\chi_\sigma(\tn{T}_\sigma):\nabla (\vcg{\varphi}-\vu_\sigma))\dx\dtau
\end{aligned}
\end{equation}
together with the energy inequality
\begin{equation} \label{sigma_5}
\begin{aligned}
& \int_\Omega  \Big( \frac 12 \vr_\sigma|\vu_\sigma|^2 +  P(\vr_\sigma) +  (kL\eta_\sigma \log \eta +\zeta\eta_\sigma^2)\Big) (t,\cdot)\dx + \frac 12  \int_\Omega {\rm Tr}\, (\tn{T}_\sigma -\alpha G_\sigma(\tn{T}_\sigma))(t,\cdot) \dx  \\
&+ \varepsilon  \int_0^t \int_\Omega \Big(\frac{kL}{\eta_\sigma} + 2\zeta \Big) |\nabla \eta_\sigma|^2 \dx \,{\rm d}\tau +\frac{\alpha k}{4\lambda} \int_0^t\int_\Omega (\eta_\sigma +\alpha){\rm Tr}\, [\chi_\sigma(\tn{T_\sigma})]^{-1} \dx \,{\rm d}\tau  \\
& + \int_0^t\int_\Omega \big(2\mu_0 (1+|\tn{D}^d(\vu_\sigma)|^2)^{\frac{r-2}{2}} |\tn{D}^d(\vu_\sigma)|^2 + \Lambda(\Div \vu_\sigma) \big)\dx \,{\rm d}\tau +\frac{1}{4\lambda}  \int_0^t \int_\Omega {\rm Tr}\, (\chi_\sigma(\tn{T_\sigma}))\dx\,{\rm d}\tau  \\
& + \frac{\alpha \varepsilon}{6}  \int_0^t \int_\Omega |\nabla {\rm Tr}\, (\log \chi_{\sigma}(\tn{T_\sigma}))|^2 \dx \,{\rm d}\tau \\
& \leq \int_\Omega  \Big( \frac 12 \vr_{0,\Theta}|P_n(\vu_{0,\Theta})|^2 +  P(\vr_{0,\Theta}) +  (kL\eta_{0,\Theta} \log \eta_{0,\Theta} +\zeta\eta_{0,\Theta}^2)\Big) \dx  \\
&  + \frac 12  \int_\Omega {\rm Tr}\, (\tn{T}_{0,\alpha,\Theta} -\alpha G_\sigma(\tn{T}_{0,\alpha,\Theta})) \dx +\int_0^t \int_\Omega \vr_\sigma\vc{f}\cdot \vu_\sigma \dx\,{\rm d}\tau + \frac{3k}{4\lambda} \int_0^t \int_\Omega (\eta_\sigma+\alpha) \dx \,{\rm d}\tau + \frac{3\alpha T}{4\lambda} |\Omega|.
\end{aligned}
\end{equation}
Note that the index $n$ is fixed, whence $\vu_\sigma$ belongs to $C([0,T];X_n)$ for any $\sigma >0$. Therefore all the spatial norms of the velocity sequence are equivalent. Due to the properties of the convex potential $\Lambda(\cdot)$ we also have that
$$
\|\Div \vu_\sigma\|_{L^\infty((0,T)\times \Omega)} \leq \frac 1b.
$$
Due to \eqref{sigma_5} and the properties mentioned above we have the following bounds uniform in $\sigma$:
\begin{equation} \label{sigma_6}
\begin{aligned}
&\|\vu_\sigma\|_{L^\infty(0,T;W^{2,q}(\Omega))} + \|\eta_\sigma\|_{L^\infty(0,T;L^2(\Omega))} + \|\vr_\sigma\|_{L^\infty((0,T)\times \Omega)} + \|\nabla \eta_\sigma\|_{L^2((0,T)\times \Omega)} \\
&+ \|\nabla {\rm Tr}\, \chi_\sigma(\tn{T_\sigma})\|_{L^2((0,T)\times \Omega)} + \int_0^T \int_\Omega {\rm Tr}\, [\chi_\sigma(T_\sigma)]^{-1} \dx\dt \leq C;
\end{aligned}
\end{equation}
above, $q$ is arbitrarily large, but finite. From \eqref{sigma_1}--\eqref{sigma_4} we can obtain
\begin{equation} \label{sigma_7}
\begin{aligned}
& \vr_\sigma \geq C > 0 \qquad \text{ a.e. in } (0,t)\times \Omega \\ 
& \eta_\sigma \geq C > 0 \qquad \text{ a.e. in } (0,t)\times \Omega \\ 
&\|\tn{T}_\sigma\|_{L^\infty(0,T;W^{1,2}(\Omega))} + \|\tn{T}_\sigma\|_{L^2(0,T;W^{2,2}(\Omega))} + \Big\|\pder{\tn{T}_\sigma}{t}\Big\|_{L^2((0,T)\times \Omega)} \\
&+\|\eta_\sigma\|_{L^\infty(0,T;W^{1,2}(\Omega))} + \|\eta_\sigma\|_{L^2(0,T;W^{2,2}(\Omega))} + \Big\|\pder{\eta_\sigma}{t}\Big\|_{L^2((0,T)\times \Omega)} \leq C.
\end{aligned}
\end{equation}
By means of the Aubin--(Simon)--Lions argument, we may extract subsequences (we relabel them) such that ($q<\infty$, arbitrarily large)
\begin{equation} \label{sigma_8}
\begin{aligned}
\vu_\sigma&\to \vu &\qquad& \text{ in } C([0,T];W^{1,q}(\Omega;\R^3)) \\
\tn{T}_\sigma &\to \tn{T} &\qquad& \text{ in } L^q(0,T;W^{1,q}(\Omega;\R^{3\times 3})) \cap  C([0,T)\times \overline{\Omega};\R^{3\times 3})\\ 
\eta_\sigma &\to \eta &\qquad& \text{ in } L^q(0,T;W^{1,q}(\Omega)) \cap  C([0,T)\times \overline{\Omega}) \\
\vr_\sigma&\to \vr &\qquad& \text{ in } C([0,T];L^{q}(\Omega))
\end{aligned}
\end{equation}
(the last line follows from properties of the continuity equation as in the previous step, see \cite{DiPerna-Lions}). 

Let us now verify that the limit tensor $\tn{T}$ is not only symmetric, but also positive definite. Assume the contrary, i.e., there exists $D\subset (0,T]\times \Omega$ with $|D|_4 >0$ (the four dimensional Lebesgue measure) such that $[T]^+ \vc{q} = 0$ a.e. in $(0,T]\times \Omega$ with $|\vc{q}|=1$ a.e. in $D$ and $\vc{q}=\vc{0}$ a.e. in $(0,T]\times \Omega \setminus D$ (the vector $\vc{q} \in L^\infty((0,T)\times \Omega;\R^3)$; note that due to the estimate of the integral term it is not possible that all eigenvalues of $\tn{T}$ become simultaneously zero). Then
$$
\begin{aligned}
&|D| = \int_0^T\int_\Omega |\vc{q}|^2 \dx\dt = \int_0^T \int_\Omega [\chi_\sigma(\tn{T}_\sigma)]^{-\frac 12} \vc{q}\cdot [\chi_\sigma(\tn{T}_\sigma)]^{-\frac 12} \vc{q} \dx \dt \\
&\leq \Big(\int_0^T\int_\Omega [\chi_\sigma(\tn{T}_\sigma)]^{-1} \dx\dt\Big)^{\frac 12} \Big(\int_0^T \int_\Omega \vc{q}^T [\chi_\sigma(\tn{T}_\sigma)]\vc{q} \dx\dt\Big)^{\frac 12} \\
&\leq C \Big(\int_0^T \int_\Omega \vc{q}^T [\chi_\sigma(\tn{T}_\sigma)]\vc{q} \dx\dt\Big)^{\frac 12}.
\end{aligned}
$$
Passing to the limit with $\sigma \to 0$ we finally get that $|D|=0$. Then also 
$$
\nabla {\rm Tr}\, (\log \chi_\sigma (\tn{T}_\sigma)) \rightharpoonup \nabla {\rm Tr}\, (\log \tn{T})
$$
in $L^2((0,T)\times \Omega;\R^3)$.

We may now pass to the limit in the relations \eqref{sigma_1}--\eqref{sigma_4}, where we use that
$$
\liminf_{\sigma \to 0^+} \Lambda (\Div\vu_\sigma) \geq \Lambda (\Div\vu) \qquad \text{ a.e. in } (0,T)\times \Omega
$$
as well as the weak lower semicontinuity of several terms in the energy inequality. The limit passage is therefore completed. 

\subsection{Estimates independent of $n$, limit passage $n \to \infty$}
\label{s_4.4}

The aim of this subsection is to pass to the limit with $n\to \infty$. This limit passage is more complex since we lose the control of the compactness of the sequence of velocities and consequently also of the sequence of densities. Therefore new ideas must be used and in this direction we combine the approach from \cite{Barrett-Lu-Suli} with the approach from \cite{Feireisl-Liu-Malek}. First we recall the system of equations/inequalities satisfied for any $n\in {\mathbb N}$: 

\begin{equation} \label{en_1}
\int_0^t \int_{\R^3} \Big(\varrho_n \pder{\psi}{t} + \vr_n\vc{u}_n\cdot \nabla \psi \Big) \dx \,{\rm d}\tau = \int_{\R^3} \vr_n(t,\cdot)\psi (t,\cdot)\dx -\int_{\R^3} \vr_{0,\Theta} \psi(0,\cdot) \dx 
\end{equation}
for any $\psi \in C^\infty_0([0,T] \times \R^3)$ with the density and velocity extended by zero outside of $\Omega$,
\begin{equation} \label{en_2}
\pder{\eta_n}{t} + \Div(\eta_n\vc{u}_n) = \varepsilon \Delta \eta_n,
\end{equation}
$\eta_\sigma(0,x) =\eta_{0,\Theta}(x)$ and $\pder{\eta_\sigma}{\vc{n}} = 0$ at the boundary of $\Omega$. Further
\begin{equation} \label{en_3}
\begin{aligned}
\pder{\tn{T}_n}{t} + {\rm Div} (\vc{u}_n \tn{T}_n) - (\nabla \vc{u}_n \tn{T}_n + \tn{T}_n \nabla^{T}\vc{u}_n) = \varepsilon \Delta \tn{T}_n + \frac{k}{2\lambda}(\eta_n +\alpha)\tn{I} -\frac{1}{2\lambda} \tn{T}_n,
\end{aligned}
\end{equation}
where $\tn{T}_n(0,x)= \tn{T}_{0,\alpha,\Theta}(x)$, $\pder{\tn{T}}{{\vc{n}}}=\tn{O}$ at the boundary of $\Omega$, and finally we  have $\vc{u}_n = \sum_{i=1}^n c_{i,n}^n(t) \vc{w}_i(x)$, $\vc{u}_n(0,x) = P_n(\vc{u}_{0,\Theta})(x)$, where for any $\vcg{\varphi} \in C([0,T];X_n)$
\begin{equation} \label{en_4}
\begin{aligned}
\int_0^t &\int_\Omega \big(\Lambda(\Div \vcg{\varphi}) -\Lambda(\Div\vu_n)\big)\dx\dtau \geq \frac 12 \int_\Omega \vr_n |\vu_n|^2 (t,\cdot) \dx - \int_\Omega \vr_{0,\Theta}|\vu_{0,\Theta}|^2 \dx \\
& - \int_\Omega \vr_n \vu_n \cdot \vcg\varphi(t,\cdot)\dx + \int_\Omega \vr_{0,\Theta}\vu_{0,\Theta} \cdot \vcg{\varphi}(0,\cdot)\dx + \int_0^t \int_\Omega \Big(\vr_n \vu_n \cdot \pder{\vcg{\varphi}}{t} + \vr_n \vu_n \otimes \vu_n : \nabla \vcg{\varphi}\Big)\dx\dt \\
&+ \int_0^t\int_\Omega 2\mu_0(1+ |\tn{D}^d(\vc{u}_n)|^{2})^{\frac{r-2}{2}}{\tn D}^d(\vu_n) :{\tn D}^d (\vu_n-\vcg{\varphi})  \dx\dtau + \int_0^t \int_\Omega p(\vr_n) \Div (\vcg{\varphi}-\vu_n)\dx\dtau \\
&+ \int_0^t \int_\Omega \Big(\frac {\alpha}{2} {\rm Tr}\, (\log \tn{T}_n) + kL\eta_n +\zeta \eta_n^2  \Big) \Div (\vcg{\varphi}-\vu_n) \dx\dtau  \\
&+\int_0^t\int_\Omega (\vr_n \vc{f}\cdot (\vcg{\varphi}-\vu_n) -\tn{T}_n:\nabla (\vcg{\varphi}-\vu_n))\dx\dtau.
\end{aligned}
\end{equation}

This system is accompanied with the energy inequality
\begin{equation} \label{en_5}
\begin{aligned}
& \int_\Omega  \Big( \frac 12 \vr_n|\vu_n|^2 +  P(\vr_n) +  (kL\eta_n \log \eta_n +\zeta\eta_n^2)\Big) (t,\cdot)\dx + \frac 12  \int_\Omega {\rm Tr}\, (\tn{T}_n -\alpha G_\sigma(\tn{T}_n))(t,\cdot) \dx  \\
&+ \varepsilon  \int_0^t \int_\Omega \Big(\frac{kL}{\eta_n} + 2\zeta \Big) |\nabla \eta_n|^2 \dx \,{\rm d}\tau +\frac{\alpha k}{4\lambda} \int_0^t\int_\Omega (\eta_n +\alpha){\rm Tr}\, [\tn{T}_n)]^{-1} \dx \,{\rm d}\tau  \\
& + \int_0^t\int_\Omega \big(2\mu_0 (1+|\tn{D}^d(\vu_n)|^2)^{\frac{r-2}{2}} |\tn{D}^d(\vu_n)|^2 + \Lambda(\Div \vu_n) \big)\dx \,{\rm d}\tau +\frac{1}{4\lambda}  \int_0^t \int_\Omega {\rm Tr}\, \tn{T}_n\dx\,{\rm d}\tau  \\
& + \frac{\alpha \varepsilon}{6}  \int_0^t \int_\Omega |\nabla {\rm Tr}\, \tn{T}_n|^2 \dx \,{\rm d}\tau \\
& \leq \int_\Omega  \Big( \frac 12 \vr_{0,\Theta}|P_n(\vu_{0,\Theta})|^2 +  P(\vr_{0,\Theta}) +  (kL\eta_{0,\Theta} \log \eta_{0,\Theta} +\zeta\eta_{0,\Theta}^2)\Big) \dx  \\
&  + \frac 12  \int_\Omega {\rm Tr}\, (\tn{T}_{0,\alpha,\Theta} -\alpha G_\sigma(\tn{T}_{0,\alpha,\Theta})) \dx +\int_0^t \int_\Omega \vr_n\vc{f}\cdot \vu_n \dx\,{\rm d}\tau + \frac{3k}{4\lambda} \int_0^t \int_\Omega (\eta_n+\alpha) \dx \,{\rm d}\tau + \frac{3\alpha T}{4\lambda} |\Omega|.
\end{aligned}
\end{equation}

We now summarize the estimate which we have for our sequences. They mostly come from the energy inequality, however, below we deduce also several other estimates directly from the equations.
\begin{equation}\label{en_6}
\begin{aligned}
&\|\vr_n |\vu_n|^2\|_{L^\infty(0,T;L^1(\Omega))} + \|\vu_n\|_{L^r(0,T;W^{1,r}_0(\Omega))} + \|\Div \vu_n\|_{L^\infty((0,T)\times \Omega)} + \|\eta_n\|_{L^\infty(0,T;L^2(\Omega))}  \\
&+ \|\eta_n\|_{L^2(0,T;W^{1,2}(\Omega))} + \|\eta_n^{\frac 12}\|_{L^2(0,T;W^{1,2}(\Omega))} + \|\eta_n\|_{L^2(0,T;W^{1,2}(\Omega))} + \|{\rm Tr}\, (\tn{T}_n - \alpha \log \tn{T}_n)\|_{L^\infty(0,T;L^1(\Omega))}  \\
&+\|(\eta_n +\alpha){\rm Tr}\, [\tn{T}_n]^{-1}\|_{L^1((0,T)\times \Omega)} + \|\nabla {\rm Tr}\, \log \tn{T}_n\|_{L^2((0,T)\times \Omega)} \leq C.
\end{aligned}
\end{equation}
Furthermore, the continuity equation and the regularized continuity equation imply
\begin{equation} \label{en_7}
\|\vr_n\|_{L^\infty((0,T)\times \Omega)} \leq C, \qquad \vr_n,\eta_n \geq C > 0 \quad \text{ a.e. in } (0,T)\times \Omega.
\end{equation}
Since the density is uniformly bounded away from zero, we therefore have from the first term on the left-hand side in \eqref{en_6}
$$
\|\vu_n\|_{L^\infty(0,T;L^2(\Omega))} \leq C
$$
which by interpolation implies
\begin{equation} \label{en_9}
\|\vu_n\|_{L^{\frac{2r}{r-1}}((0,T)\times \Omega)} \leq C
\end{equation}
provided $r\geq \frac {11}{5}$ (which is in our case surely satisfied since due to other reasons we need $r\geq \frac 52$). This is precisely the place why it is needed in \cite{Feireisl-Liu-Malek} $r\geq \frac{11}{5}$ as well as the density bounded away from zero, which can be ensured only if $\Div \vu_n$ is bounded.

We now, based on the estimates above, can turn our attention to the bounds of the stress tensor $\tn{T}_n$. Multiplying \eqref{en_3} by $\tn{T}_n$ and using the bound on $\nabla \vu_n$ in $L^r((0,T)\times \Omega)$ together with the Gronwall argument, exactly as in Subsection \ref{s_3.2}, we obtain that
\begin{equation} \label{en_10}
\|\tn{T}_n\|_{L^\infty(0,T;L^2(\Omega))} + \|\nabla \tn{T}_n\|_{L^2((0,T)\times\Omega)} \leq C.
\end{equation}
To conclude, looking once more at the continuity equation, we also get
\begin{equation} \label{en_11}
\Big\|\pder{\vr_n}{t}\Big\|_{L^{\frac{2r}{r-1}}(0,T; W^{-1,\frac{2r}{r-1}} (\Omega))} \leq C
\end{equation}
while from their parabolic regularisations
\begin{equation} \label{en_12}
\Big\|\pder{\eta_n}{t}\Big\|_{L^2(0,T;W^{-1,2}(\Omega))} + \Big\|\pder{\tn{T}_n}{t}\Big\|_{L^{\frac{10}{7}}(0,T;W^{-1,\frac{10}{7}}(\Omega))}\leq C. 
\end{equation}
Then (formally for subsequences, which we can, however, always relabel)
\begin{equation} \label{en_13}
\begin{aligned}
\vr_n \rightharpoonup ^* \vr & \qquad  & & \text{ in } L^\infty((0,T)\times \Omega) \\
\vr_n \to  \vr & \qquad  & & \text{ in } C([0,T];L^q_{{\rm weak}}(\Omega)) \quad \text{ for all } q<\infty \\
\eta_n \rightharpoonup ^* \eta & \qquad & &  \text{ in } L^\infty(0,T;L^2(\Omega)) \cap L^2(0,T;W^{1,2}(\Omega)) \\
\eta_n \to \eta & \qquad & &  \text{ in } L^q((0,T)\times \Omega) \quad \text{ for all } q<\frac{10}{3} \\
\eta_n \to  \eta & \qquad  & & \text{ in } C([0,T];L^2_{{\rm weak}}(\Omega)) \\
\vu_n \rightharpoonup  \vu & \qquad & &  \text{ in } L^{\frac{2r}{r-1}}((0,T)\times\Omega;\R^3) \cap L^r(0,T;W^{1,r}(\Omega;\R^3)) \\
\vu_n \rightharpoonup ^* \vu & \qquad & &  \text{ in } L^\infty(0,T;L^2(\Omega;\R^3))  \\
\Div \vu_n \rightharpoonup ^* \Div \vu & \qquad  & & \text{ in } L^\infty((0,T)\times \Omega) \\
\tn{T}_n \rightharpoonup ^* \tn{T} & \qquad & &  \text{ in } L^\infty(0,T;L^2(\Omega;\R^{3\times 3})) \cap L^2(0,T;W^{1,2}(\Omega;\R^{3\times 3})) \\
\tn{T}_n \to \tn{T} & \qquad & &  \text{ in } L^q((0,T)\times \Omega;\R^{3\times 3}) \quad \text{ for all } q<\frac{10}{3} \\
\tn{T}_n \to  \tn{T} & \qquad  & & \text{ in } C([0,T];L^2_{{\rm weak}}(\Omega;\R^{3\times3})) \\
{\rm Tr}\, \log \tn{T}_n \to \overline{{\rm Tr}\, \log \tn{T}} & \qquad & &  \text{ in } L^2(0,T; W^{1,2}(\Omega)).  
\end{aligned}
\end{equation}
Furthermore, $\eta$, $\vr \geq C >0$ a.e. in $(0,T)\times \Omega$. Next, we also have
$$
\Big\|\pder{}{t}\int_\Omega \vr_n\vu_n \cdot \vc{w}_i \dx\Big\|_{L^1(0,T)} \leq C
$$
for all $i \in \mathbb{N}$ (basis vectors) and thus for any $i \in \mathbb{N}$
$$
\int_\Omega \vr_n \vu_n \cdot \vc{w}_i \dx \to \int_\Omega \vr\vu\cdot \vc{w} \dx \qquad \text {in } L^1(0,T).
$$
Therefore
$$
\vr_n \vu_n \to \vr \vu \qquad \text { in } L^r(0,T;W^{-1,2}(\Omega;\R^3))
$$
and thus
$$
\vr_n \vu_n \otimes \vu_n \rightharpoonup \vr \vu\otimes \vu \text { in } L^{\frac{r}{r-1}}((0,T)\times\Omega;\R^{3\times 3}).
$$
Exactly as in the previous section we may verify that the limit stress tensor $\tn{T}$ is positive definite. It is based on the estimate of ${\rm Tr}\, [\tn{T}_n]^{-1}$. Then also 
$$
{\rm Tr}\, \log \tn{T}_n \rightharpoonup {\rm Tr}\, \log \tn{T}  \qquad   \text{ in } L^2(0,T; W^{1,2}(\Omega)).
$$
To conclude, we need to show the strong convergence of the density in $L^q((0,T)\times \Omega)$  as well as $\vu_n \to \vu$ in $L^q(0,T;W^{1,q}(\Omega;\R^3))$ for some $q\geq 1$. Here we follow \cite{Feireisl-Liu-Malek}.

Before doing so, with the help of the convergences above we may pass to the limit in the \eqref{en_1}--\eqref{en_4} to conclude that
\begin{equation} \label{enl_1}
\int_0^t \int_{\R^3} \Big(\varrho \pder{\psi}{t} + \vr\vc{u}\cdot \nabla \psi \Big) \dx \,{\rm d}\tau = \int_{\R^3} \vr(t,\cdot)\psi (t,\cdot)\dx -\int_{\R^3} \vr_{0,\Theta} \psi(0,\cdot) \dx
\end{equation}
for all $\psi \in C^\infty_0([0,T]\times \R^3)$, where both the density and velocity are extended by zero outside of $\Omega$,
\begin{equation} \label{enl_2}
\int_0^t \int_\Omega \Big(\eta \pder{\psi}{t} + \eta\vc{u}\cdot \nabla \psi - \varepsilon \nabla \eta \cdot \nabla \psi \Big) \dx \,{\rm d}\tau = \int_\Omega \eta(t,\cdot)\psi (t,\cdot)\dx -\int_\Omega \eta_{0,\Theta} \psi(0,\cdot) \dx 
\end{equation}
for all $\psi \in C^\infty([0,T]\times \overline{\Omega})$,
\begin{equation} \label{enl_3}
\begin{aligned}
&\int_0^t \int_\Omega \Big(\tn{T}: \pder{\tn{M}}{t} + \vc{u}\tn{T}:: \nabla \tn{M} - \varepsilon \nabla \tn{T}::  \nabla \tn{M} \Big) \dx \,{\rm d}\tau  + \int_0^t \int_\Omega (\nabla \vu \tn{T} + \tn{T}\nabla\vu^T):\tn{M}  \dx \,{\rm d}\tau \\
& +\int_0^t \int_\Omega \Big( \frac{k}{2\lambda}(\eta_n +\alpha){\rm Tr}\tn{M} -\frac{1}{2\lambda} \tn{T} :\tn{M} \Big) 
= \int_\Omega \tn{T}(t,\cdot):\tn{M} (t,\cdot)\dx -\int_\Omega \tn{T}_{0,\alpha,\Theta}: \tn{M}(0,\cdot) \dx
\end{aligned}
\end{equation}
for all $\tn{M} \in C^\infty([0,T]\times \overline{\Omega};\R^{3\times 3})$,
\begin{equation} \label{enl_4}
\begin{aligned}
\int_0^t &\int_\Omega \big(\Lambda(\Div \vcg{\varphi}) -\Lambda(\Div\vu)\big)\dx\dtau \geq \frac 12 \int_\Omega \vr |\vu|^2 (t,\cdot) \dx - \int_\Omega \vr_{0,\Theta}|\vu_{0,\Theta}|^2 \dx \\
& - \int_\Omega \vr \vu \cdot \vcg\varphi(t,\cdot)\dx + \int_\Omega \vr_{0,\Theta}\vu_{0,\Theta} \cdot \vcg{\varphi}(0,\cdot)\dx   + \int_0^t \int_\Omega \Big(\vr \vu \cdot \pder{\vcg{\varphi}}{t} + \vr \vu \otimes \vu : \nabla \vcg{\varphi}\Big)\dx\dt \\
&+ \int_0^t\int_\Omega 2\mu_0\Big[\overline{(1+ |\tn{D}^d(\vc{u})|^{2})^{\frac{r-2}{2}}{\tn D}^d(\vu) :{\tn D}^d (\vu)}-\overline{(1+ |\tn{D}^d(\vc{u})|^{2})^{\frac{r-2}{2}}{\tn D}^d(\vu)}:\tn{D}^d(\vcg{\varphi})\Big]  \dx\dtau \\
&+ \int_0^t \int_\Omega(\overline{p(\vr)} \Div \vcg{\varphi}-\overline{p(\vr)\Div \vu)}\dx\dtau + \int_0^t \int_\Omega \Big(\frac {\alpha}{2} {\rm Tr}\, (\log \tn{T}) + kL\eta +\zeta \eta^2  \Big) \Div (\vcg{\varphi}-\vu) \dx\dtau  \\
&+\int_0^t\int_\Omega (\vr \vc{f}\cdot (\vcg{\varphi}-\vu) -\tn{T}:\nabla (\vcg{\varphi}-\vu))\dx\dtau
\end{aligned}
\end{equation}
for all $\vcg{\psi} \in C^\infty_0([0,T]\times \Omega;\R^3)$. 
Evidently, to conclude, it is enough to show that $\tn{D}^d \vu_n \to \tn{D}^d(\vu)$ in $L^{q}((0,T)\times\Omega)$ for some $q>1$  as well as the strong convergence $\vr_n \to \vr$ in some $L^q((0,T)\times \Omega)$, $q\geq 1$. Note also that the term $\overline{(1+ |\tn{D}^d(\vc{u})|^{2})^{\frac{r-2}{2}}{\tn D}^d(\vu) :{\tn D}^d (\vu)}$ is possibly a non-negative measure.

To this aim, exactly as in \cite{Feireisl-Liu-Malek} (we present the proof here for completeness, since we shall use the same argument also in the next sections) we consider as test function in \eqref{enl_4} the function
$$
\vcg{\varphi}_{h,\delta}:= \xi_\delta \mu_{-h}\star \mu_h \star (\xi_\delta \vu)
$$
with $\delta$, $h>0$ and
$$
\mu_h(t):=\frac 1h 1_{[-h,0]}(t), \qquad  \mu_{-h}(t):=\frac 1h 1_{[0,h]}(t)
$$
and $\xi_\delta \in C^\infty_0((0,t))$, $0\leq \xi_\delta \leq 1$, $\xi_\delta(t) =1$ if $t\in [\delta,t-\delta]$, $\delta >0$. Since $\vcg{\varphi}_{h,\delta}$ is a regularized (in time) function $\vu$, the fact that we use this test function in \eqref{enl_4} is in fact replacement testing the inequality by $\vc{u}$ which is not possible to do directly. It is not difficult to verify that
$$
\begin{aligned}
\lim_{\delta \to 0} \lim_{h\to 0} \int_0^t \int_\Omega (\Lambda(\Div \vcg{\varphi}_{h,\delta})-\Lambda(\Div \vu)) \dx\dtau &= 0, \\
\int_\Omega (\vr\vu \cdot \vcg{\varphi}_{h,\delta})(t,\cdot) \dx -\int_\Omega (\vr\vu \cdot \vcg{\varphi}_{h,\delta})(0,\cdot) \dx & =0.
\end{aligned}
$$
Moreover, due to the monotonicity of the stress tensor we have
$$
\big((1+ |\tn{D}^d(\vc{u}_n)|^{2})^{\frac{r-2}{2}}{\tn D}^d(\vu)_n - (1+ |\tn{D}^d(\vc{u})|^{2})^{\frac{r-2}{2}}{\tn D}^d(\vu)\big) :\big({\tn D}^d (\vu_n)-{\tn D}^d (\vu)\big) \geq 0 \qquad \text{ a.e. in } (0,T)\times \Omega;
$$
hence
$$
\begin{aligned}
\int_0^t\int_\Omega \overline{(1+ |\tn{D}^d(\vc{u})|^{2})^{\frac{r-2}{2}}{\tn D}^d(\vu) :{\tn D}^d (\vu)} & \dx\dtau \geq \int_0^t\int_\Omega \overline{(1+ |\tn{D}^d(\vc{u})|^{2})^{\frac{r-2}{2}}{\tn D}^d(\vu)}:\tn{D}^d(\vu)\dx\dtau \\
&=\lim_{\delta \to 0} \lim_{h\to 0} \int_0^t\int_\Omega \overline{(1+ |\tn{D}^d(\vc{u})|^{2})^{\frac{r-2}{2}}{\tn D}^d(\vu)}:\tn{D}^d(\vcg{\varphi}_{h,\delta}) \dx\dtau.
\end{aligned}
$$
The most complicated is the term with the time derivative. We have
$$
\begin{aligned}
\int_0^t \int_\Omega \vr \vu \cdot \pder{\vcg{\varphi_{h,\delta}}}{t}\dx\dtau &= \int_0^t \int_\Omega \Big(\pder{\xi_\delta}{t} \vr \vu   \cdot \big(\mu_{-h} \star \mu_h \star (\xi_\delta \vu)\big)\dx\dtau \\
&+ \int_0^t \int_\Omega \mu_h \star (\vr \vu \xi_\delta)\cdot \Big(\pder{}{t}(\mu_h \star (\xi_\delta \vu))\Big) \dx\dt. 
\end{aligned}
$$
The first term on the right-hand side clearly converges to
$$
\begin{aligned}
\lim_{\delta \to 0} \lim_{h\to 0} & \int_0^t \int_\Omega \Big(\vr \vu  \pder{\xi_\delta}{t} \cdot \mu_{-h} \star \mu_h \star (\xi_\delta \vu)\dx\dtau \\
&=\lim_{\delta \to 0}  \frac 12 \int_0^t \Big(\int_\Omega \vr |\vu|^2 \dx\Big) \pder{\xi_\delta^2}{t}\dtau = -\frac 12 \int_\Omega \vr|\vu|^2 (t,\cdot)\dx + \frac 12 \int_\Omega \vr_0 |\vu_0|^2 \dx.
\end{aligned}
$$
The computations for the second term are more complex. We have (due to the cut-off function we may extend all functions by zero for negative times and time above $t$)
$$
\begin{aligned}
\int_{\R}\int_\Omega  & \mu_h \star (\vr\xi_\delta \vu) \cdot \Big(\pder{}{t} (\mu_h \star (\xi_\delta\vu))\Big) \dx\dt = -\int_\R \int_\Omega  \Big(\pder{}{t} (\mu_h \star (\vr\xi_\delta \vu))\Big) \cdot \big(\mu_h \star (\xi_\delta\vu)\big) \dx\dt \\
& =- \int_\R \int_\Omega \frac{(\vr\xi_\delta \vu)(t+h,x) - (\vr\xi_\delta \vu)(t,x) }{h} \cdot \big(\mu_h \star (\xi_\delta \vu)\big) \dx\dt \\
& + \int_\R \int_\Omega \vr \frac{(\xi_\delta \vu)(t+h,x) - (\xi_\delta \vu)(t,x) }{h} \cdot \big(\mu_h \star (\xi_\delta\vu)\big) \dx\dt 
 -\frac 12 \int_\R \int_\Omega \vr \pder{}{t} \big|\mu_h \star (\xi_\delta\vu) \big|^2 \dx\dt \\
&  =- \int_\R \int_\Omega\Big( \frac{\vr(t+h,x) -\vr(t,x)}{h} (\xi_\delta \vu)(t+h,x) \cdot \big(\mu_h \star (\xi_\delta\vu)\big)  -\frac 12 \vr \pder{}{t} \big|\mu_h \star (\xi_\delta\vu) \big|^2\Big) \dx\dt .
\end{aligned}
$$
Since
$$
\pder{}{t} (\mu_h \star \vr) + \Div (\mu_h \star (\vr\vu)) = 0
$$
in $\mathcal D'(\Omega)$ for any $t\in \R$, where $(\vr,\vu)(t,x) =(\vr_0,\vu_{0})$ for $t<0$ and $(\vr,\vu)(t,x) = (\vr(T),\vc{0})$ for $t>T$, combining this identity with the weak formulation of the continuity equation yields
$$
\begin{aligned}
&\int_{\R}\int_\Omega   \mu_h \star (\vr\xi_\delta \vu) \cdot \Big(\pder{}{t} (\mu_h \star (\xi_\delta\vu))\Big) \dx\dt \\
&= \frac 12 \int_\R \int_\Omega \vr\vu \cdot \nabla |\mu_h \star (\xi_\delta\vu)|^2 \dx\dt - \int_\R \int_\Omega \frac{\vr(t+h,x) -\vr(t,x)}{h} (\xi_\delta \vu)(t+h,x) 
\cdot \big(\mu_h \star (\xi_\delta\vu)\big) \dx\dt     \\
&= \frac 12 \int_\R \int_\Omega \vr\vu \cdot \nabla |\mu_h \star (\xi_\delta\vu)|^2  \dx\dt - \int_\R \int_\Omega \mu_h \star (\vr\vu) \cdot \nabla \big((\xi_\delta \vu)(t+h,x) \cdot \mu_h \star (\xi_\delta\vu)\big) \dx\dt 
.
\end{aligned}
$$
This implies
$$
\lim_{\delta \to 0} \lim_{h\to 0} \int_0^t \int_\Omega \Big(\vr\vu\otimes \vu: \nabla \vcg{\varphi}_{h,\delta} + \big(\mu_h \star (\vr\xi_\delta \vu)\big) \cdot \Big(\pder{}{t} (\mu_h \star (\xi_\delta\vu))\Big)\Big) \dx\dtau = 0.
$$
Whence, letting $h \to 0$ and then $\delta \to 0$ in \eqref{enl_4} with $\vcg{\varphi}:=\vcg{\varphi}_{h,\delta}$ results into
\begin{equation} \label{EVF}
\int_0^t (\overline{p(\vr)}\Div\vu - \overline{p(\vr)\Div \vu}) \dx\dtau \leq 0
\end{equation}
for any $t \in (0,T]$. We show that this inequality (which replaces the effective viscous flux identity here) provides the strong convergence of the sequence of densities. First, recall that the pressure potential $P(\vr)= \vr\psi(\vr)$ is convex and that due to the boundedness of the densities we have both for the sequences $(\vr_n,\vu_n)$ as well as for the limit pair $(\vr,\vu)$ the renormalized continuity equation. Thus, passing with $n \to \infty$, we have both
$$
\begin{aligned}
\pder{P(\vr)}{t} + \Div (P(\vr)\vu) + p(\vr) \Div \vu &= 0, \\
\pder{\overline{P(\vr)}}{t} + \Div (\overline{P(\vr)}\vu) + \overline{p(\vr)\Div \vu} &=0
\end{aligned}
$$
in the weak sense. Since the function $\psi :=1$ is an eligible test function in both cases, we conclude
$$
\int_{\Omega} (\overline{P(\vr)}-P(\vr))(t,\cdot) \dx = -\int_0^t\int_\Omega (\overline{p(\vr)\Div \vu} - p(\vr)\Div \vu) \dx\dtau,
$$
where we used that $\vu=\vc{0}$ on the boundary $\partial \Omega$ and $\overline{P(\vr_0)} = P(\vr_0)$ a.e. in $\Omega$. Due to \eqref{EVF} we have
$$
-\int_0^t \int_\Omega (\overline{p(\vr)\Div \vu} - p(\vr)\Div \vu) \dx\dtau \leq -\int_0^t \int_\Omega (\overline{p(\vr)} -p(\vr))\Div \vu \dx\dtau
$$
and
$$
\begin{aligned}
-&\int_0^t \int_\Omega (\overline{p(\vr)} -p(\vr))\Div \vu \dx\dtau = -\lim_{n\to \infty} \int_0^t \int_\Omega (p(\vr_n)-p(\vr))\Div \vu \dx\dtau \\
&\leq \lim_{n\to \infty}  \int_0^t \int_\Omega p'(\vr) (\vr-\vr_n) \Div \vu \dx\dtau + C \limsup_{n\to \infty} \int_0^t \int_\Omega |\vr_n-\vr|^2\dx \dtau \\
& = C \limsup_{n\to \infty} \int_0^t \int_\Omega |\vr_n-\vr|^2\dx \dtau.
\end{aligned}
$$
Note that here we need the assumption $p\in C^2((0,\infty))$. On the other hand, due to the convexity of the pressure potential
$$
\int_\Omega (\overline{P(\vr)}-P(\vr))\dx \geq c \limsup_{n\to \infty} \int_\Omega |\vr_n-\vr|^2\dx\dtau
$$
for some $c$ positive; the Gronwall argument then concludes that $\vr_n \to \vr$ in $L^2((0,T)\times \Omega)$ and by interpolation, the convergence takes place in any $L^q((0,T)\times \Omega)$, $q<\infty$, and also a.e. in $(0,T)\times \Omega$, at least for a suitably chosen subsequence. Hence $\overline{P(\vr)} = P(\vr)$, $\overline{p(\vr)}=p(\vr)$ and 
$$
\int_0^t \int_\Omega \overline{p(\vr)\Div \vu}\dx\dtau = \int_0^t \int_\Omega p(\vr)\Div \vu\dx\dtau
$$
for any $t\in (0,T]$. Therefore, \eqref{enl_4} with our test function $\vcg{\varphi}_{h,\delta}$ implies
$$
\int_0^t\int_\Omega \Big(\overline{(1+ |\tn{D}^d(\vc{u})|^{2})^{\frac{r-2}{2}}{\tn D}^d(\vu) :{\tn D}^d (\vu)} -\overline{(1+ |\tn{D}^d(\vc{u})|^{2})^{\frac{r-2}{2}}{\tn D}^d(\vu)}:\tn{D}^d(\vu)\Big)\dx\dtau\leq 0.
$$
However, since 
$$
\begin{aligned} 
\int_0^T & \int_\Omega |\tn{D}^d(\vu_n)-\tn{D}^d(\vu)|^2\dx\dtau \\
&\leq \int_0^T \int_\Omega (1+ |\tn{D}^d(\vu_n)|^2)^{\frac{r-2}{2}}\tn{D}^d(\vu_n)-(1+ |\tn{D}^d(\vu)|^2)^{\frac{r-2}{2}}\tn{D}^d(\vu)):(\tn{D}^d(\vu_n)-\tn{D}^d(\vu))\dx\dtau,
\end{aligned}
$$
we get by Korn's inequality $\nabla \vu_n \to \nabla \vu$ in $L^2((0,T)\times \Omega;\R^{3\times 3})$ and thus the convergence also holds in any $L^q((0,T)\times \Omega;\R^{3\times 3})$ for $q<r$. Using also the weak lower semicontinuity argument we finish the proof of the limit passage in the weak formulation. To conclude this part, by the proved convergences as well as the weak lower semicontinuity we can also perform the limit passage in the energy inequality.

\subsection{Estimates independent of $\Theta$, limit passage $\Theta \to 0^+$}
\label{s_4.5}

Since the previous step did not use the improved regularity of the initial condition, we may repeat step by step the arguments from Section \ref{s_4.4} to conclude that after the limit passage $\Theta \to 0^+$ we have existence of the quadruple $(\vr_\alpha,\vu_\alpha,\eta_\alpha,\tn{T}_\alpha)$ such that 

\begin{equation} \label{ealpha_1}
\int_0^t \int_{\R^3} \Big(\varrho_\alpha \pder{\psi}{t} + \vr_\alpha\vc{u}_\alpha\cdot \nabla \psi \Big) \dx \,{\rm d}\tau = \int_{\R^3} \vr_\alpha(t,\cdot)\psi (t,\cdot)\dx -\int_{\R^3} \vr_{0} \psi(0,\cdot) \dx 
\end{equation}
for all $\psi \in C^\infty_0([0,T];\R^3)$ and both the density and velocity are extended by zero outside of $\Omega$,
\begin{equation} \label{ealpha_2}
\int_0^t \int_\Omega \Big(\eta_\alpha \pder{\psi}{t} + \eta_\alpha\vc{u}_\alpha\cdot \nabla \psi - \varepsilon \nabla \eta_\alpha \cdot \nabla \psi \Big) \dx \,{\rm d}\tau = \int_\Omega \eta_\alpha(t,\cdot)\psi (t,\cdot)\dx -\int_\Omega \eta_{0} \psi(0,\cdot) \dx 
\end{equation}
for all $\psi \in C^\infty([0,T]\times\Omega)$,
\begin{equation} \label{ealpha_3}
\begin{aligned}
&\int_0^t \int_\Omega \Big(\tn{T}_\alpha: \pder{\tn{M}}{t} + \vc{u}_\alpha\tn{T}_\alpha:: \nabla \tn{M} - \varepsilon \nabla \tn{T}_\alpha::  \nabla \tn{M} \Big) \dx \,{\rm d}\tau  + \int_0^t \int_\Omega (\nabla \vu_\alpha \tn{T}_\alpha + \tn{T}_\alpha\nabla\vu_{\alpha}^T):\tn{M}  \dx \,{\rm d}\tau \\
& +\int_0^t \int_\Omega \Big( \frac{k}{2\lambda}(\eta_\alpha +\alpha){\rm Tr}\,\tn{M} -\frac{1}{2\lambda} \tn{T}_\alpha :\tn{M} \Big) = \int_\Omega \tn{T}_\alpha(t,\cdot):\tn{M} (t,\cdot)\dx -\int_\Omega \tn{T}_{0, \alpha}: \tn{M}(0,\cdot) \dx
\end{aligned}
\end{equation}
for all $\tn{M} \in C^\infty([0,T]\times \Omega;\R^{3\times 3})$;
above, $\tn{T}_{0,\alpha} = \tn{T}_0 +\alpha \tn{I}$,
\begin{equation} \label{ealpha_4}
\begin{aligned}
\int_0^t &\int_\Omega \big(\Lambda(\Div \vcg{\varphi}) -\Lambda(\Div\vu_\alpha)\big)\dx\dtau \geq \frac 12 \int_\Omega \vr_\alpha |\vu_\alpha|^2 (t,\cdot) \dx - \int_\Omega \vr_{0}|\vu_{0}|^2 \dx \\
& - \int_\Omega \vr_\alpha \vu_\alpha \cdot \vcg\varphi(t,\cdot)\dx + \int_\Omega \vr_{0}\vu_{0} \cdot \vcg{\varphi}(0,\cdot)\dx + \int_0^t \int_\Omega \Big(\vr_\alpha \vu_\alpha \cdot \pder{\vcg{\varphi}}{t} + \vr_\alpha \vu_\alpha \otimes \vu_\alpha : \nabla \vcg{\varphi}\Big)\dx\dt \\
&+ \int_0^t\int_\Omega 2\mu_0(1+ |\tn{D}^d(\vc{u}_\alpha)|^{2})^{\frac{r-2}{2}}{\tn D}^d(\vu_\alpha) :{\tn D}^d (\vu_\alpha-\vcg{\varphi})  \dx\dtau + \int_0^t \int_\Omega p(\vr_\alpha) \Div (\vcg{\varphi}-\vu_\alpha)\dx\dtau \\
&+ \int_0^t \int_\Omega \Big(\frac {\alpha}{2} {\rm Tr}\, (\log \tn{T}_\alpha) + kL\eta_\alpha +\zeta \eta_\alpha^2  \Big) \Div (\vcg{\varphi}-\vu_\alpha) \dx\dtau  \\
&+\int_0^t\int_\Omega (\vr_\alpha \vc{f}\cdot (\vcg{\varphi}-\vu_\alpha) -\tn{T}_\alpha:\nabla (\vcg{\varphi}-\vu_\alpha))\dx\dtau
\end{aligned}
\end{equation}
for all $\vcg{\varphi} \in C^\infty([0,T]\times \Omega;\R^3)$,
together with the energy inequality
\begin{equation} \label{ealpha_5}
\begin{aligned}
& \int_\Omega  \Big( \frac 12 \vr_\alpha|\vu_\alpha|^2 +  P(\vr_\alpha) +  (kL\eta_\alpha \log \eta_\alpha +\zeta\eta_\alpha^2)\Big) (t,\cdot)\dx + \frac 12  \int_\Omega {\rm Tr}\, (\tn{T}_\alpha -\alpha \log (\tn{T}_\alpha))(t,\cdot) \dx  \\
&+ \varepsilon  \int_0^t \int_\Omega \Big(\frac{kL}{\eta_\alpha} + 2\zeta \Big) |\nabla \eta_\alpha|^2 \dx \,{\rm d}\tau +\frac{\alpha k}{4\lambda} \int_0^t\int_\Omega (\eta_\alpha +\alpha){\rm Tr}\, [\tn{T}_\alpha]^{-1} \dx \,{\rm d}\tau  \\
& + \int_0^t\int_\Omega \big(2\mu_0 (1+|\tn{D}^d(\vu_\alpha)|^2)^{\frac{r-2}{2}} |\tn{D}^d(\vu_\alpha)|^2 + \Lambda(\Div \vu_\alpha) \big)\dx \,{\rm d}\tau +\frac{1}{4\lambda}  \int_0^t \int_\Omega {\rm Tr}\, \tn{T}_\alpha\dx\,{\rm d}\tau  \\
& + \frac{\alpha \varepsilon}{6}  \int_0^t \int_\Omega |\nabla {\rm Tr}\, \tn{T}_\alpha|^2 \dx \,{\rm d}\tau \\
& \leq \int_\Omega  \Big( \frac 12 \vr_{0}|\vu_{0}|^2 +  P(\vr_{0}) +  (kL\eta_{0} \log \eta_{0} +\zeta\eta_{0}^2)\Big) \dx  \\
&  + \frac 12  \int_\Omega {\rm Tr}\, (\tn{T}_{0,\alpha} -\alpha \log(\tn{T}_{0,\alpha})) \dx +\int_0^t \int_\Omega \vr_\alpha\vc{f}\cdot \vu_\alpha \dx\,{\rm d}\tau + \frac{3k}{4\lambda} \int_0^t \int_\Omega (\eta_\alpha+\alpha) \dx \,{\rm d}\tau + \frac{3\alpha T}{4\lambda} |\Omega|.
\end{aligned}
\end{equation}

\subsection{Estimates independent of $\alpha$, limit passage $\alpha \to 0^+$}
\label{s_4.6}

We finally need to pass with $\alpha \to 0$. Since the procedure is almost identical to that from Section \ref{s_4.4}, we will concentrate only on the terms with explicit dependence on $\alpha$. First of all, since $\tn{T}_{0,\alpha} = \tn{T}_0 +\alpha \tn{I}$, we easily see that
$$
\alpha \int_\Omega {\rm Tr} \log \tn T_{0,\alpha} \dx \to 0 \qquad \text{ for } \alpha \to 0^+
$$  
as well as
$$
\int_\Omega {\rm Tr}\, \tn{T}_{0,\alpha} \dx \to \int_\Omega {\rm Tr}\, \tn{T}_0 \qquad \text{ for } \alpha \to 0^+.
$$
Therefore the right-hand side of the energy inequality \eqref{ealpha_5} is bounded for $\alpha \to 0^+$ and we get the same estimates and convergence as above. We now look at the terms on the left-hand side of the energy inequality. We clearly have (always for $\alpha >0$ or $\lim_{\alpha \to 0^+}$)
$$
\begin{aligned}
\lim_{\alpha \to 0^+} \int_\Omega ({\rm Tr}\, (\tn{T}_\alpha -\alpha \log(\tn{T_\alpha})) \dx &\geq \int_\Omega {\rm Tr}\, \tn{T}\dx \\
\int_0^t\int_\Omega (\eta_\alpha +\alpha) {\rm Tr}\, ([\tn{T}]^{-1}) \dx\dtau &\geq 0 \\
\int_0^t\int_\Omega  {\rm Tr}\, \tn{T}_\alpha\dx\dtau &\to \int_0^t\int_\Omega  {\rm Tr}\, \tn{T} \dx\dtau \\
\alpha \int_0^t \int_\Omega |\nabla {\rm Tr}\, \tn{T}_\alpha|^2 \dx\dtau \geq 0.
\end{aligned}
$$
Thus, we may pass to the limit in \eqref{ealpha_5} to get (we use either the corresponding convergences or the weak lower semicontinuity)
\begin{equation} \label{limit_5}
\begin{aligned}
& \int_\Omega  \Big( \frac 12 \vr|\vu|^2 +  P(\vr_\alpha) +  (kL\eta \log \eta +\zeta\eta^2)\Big) (t,\cdot)\dx + \frac 12  \int_\Omega {\rm Tr}\, \tn{T}(t,\cdot) \dx  \\
&+ \varepsilon  \int_0^t \int_\Omega \Big(\frac{kL}{\eta} + 2\zeta \Big) |\nabla \eta|^2 \dx \,{\rm d}\tau   \\
& + \int_0^t\int_\Omega \big(2\mu_0 (1+|\tn{D}^d(\vu)|^2)^{\frac{r-2}{2}} |\tn{D}^d(\vu)|^2 + \Lambda(\Div \vu) \big)\dx \,{\rm d}\tau +\frac{1}{4\lambda}  \int_0^t \int_\Omega {\rm Tr}\, \tn{T}\dx\,{\rm d}\tau  \\
& \leq \int_\Omega  \Big( \frac 12 \vr_{0}|\vu_{0}|^2 +  P(\vr_{0}) +  (kL\eta_{0} \log \eta_{0} +\zeta\eta_{0}^2)\Big) \dx  \\
&  + \frac 12  \int_\Omega {\rm Tr}\, \tn{T}_{0} \dx +\int_0^t \int_\Omega \vr\vc{f}\cdot \vu \dx\,{\rm d}\tau + \frac{3k}{4\lambda} \int_0^t \int_\Omega \eta \dx \,{\rm d}\tau.
\end{aligned}
\end{equation}

To conclude, we repeat the procedure from Section \ref{s_4.4}, in particular the proof of the strong convergence of velocity gradient and density. The only slightly more difficult term to treat is
$$
\int_0^t \int_\Omega \frac{\alpha}{2} {\rm Tr}\, (\log \tn{T}_\alpha) \Div (\varphi-\vu_\alpha)\dx\dtau.
$$
However, due to the control of
$$
\alpha \int_0^t \int_\Omega |\nabla ({\rm Tr}\, (\log \tn{T}_\alpha))|^2 \dx\dtau \leq C
$$
and since $\|\vcg{\varphi}-\vu_\alpha\|_{L^2((0,T)\times \Omega)}\leq C$, we easily get
$$
\begin{aligned}
\int_0^t \int_\Omega & \frac{\alpha}{2} {\rm Tr}\, (\log \tn{T}_\alpha) \Div (\varphi-\vu_\alpha)\dx\dtau \\
&\leq \sqrt{\alpha}\sqrt{\alpha} \Big(\int_0^t \int_\Omega |\nabla ({\rm Tr}\, (\log \tn{T}_\alpha))|^2 \dx\dtau \Big)^{\frac 12} \Big(\int_0^t\int_\Omega |\vu_\alpha-\vcg{\varphi}|^2\dx\dtau\Big)^{\frac 12} \to 0
\end{aligned}
$$ 
for $\alpha \to 0^+$. We therefore obtain exactly the weak formulation from Definition \ref{weak_solution}. The proof of Theorem \ref{main_1} is complete.

\section{Weak-strong uniqueness}   
\label{s_5}

As already mentioned, in this part we follow paper \cite{Lu-Z.Zhang18}, however, certain modifications will be needed. We consider two solutions corresponding to the same data (initial conditions, right-hand side and boundary conditions), the weak solution in the sense of Definition \ref{weak_solution} (constructed, e.g., by the methods from the previous section) $(\vr,\vu,\tn{T}, \eta)$ and a strong solution $(\tvr,\tvu,\tnT,\teta)$; here, however, we cannot guarantee that such a solution with the given regularity as in Theorem \ref{main_2} exists, even for small data or even on a short time interval. The aim is to show that under the assumptions from Theorem \ref{main_2} the solutions coincide.

Recall that by the form of the stress tensor, the velocity divergence is bounded and thus also the density is bounded both from zero and from infinity since the initial density is so. Similarly also the quantity $\eta$ is bounded away from zero. More precisely, since
$$
0 < \underline{\vr_0} \leq \vr_0(x) \leq \overline{\vr_0} <\infty, \qquad 0 < \underline{\eta_0} \leq \eta_0(x),
$$
we have
$$
0 < C_1(T)\underline{\vr_0} \leq \vr(t,x), \tvr(t,x) \leq C_2(T) \overline{\vr_0} \qquad 0 < C_1(T) \underline{\eta_0} \leq \eta(t,x), \teta(t,x)
$$
almost everywhere (or everywhere) on $(0,T)\times \Omega$. 

Recall that 
$$
p(\vr) = \vr^2 \psi(\vr),  \qquad q(\eta) = kL \eta + \zeta \eta^2
$$
and we set
$$
\begin{aligned}
H(\vr) = \vr\psi(\vr), \quad &\text{ thus } H(\vr) = \vr H'(\vr) -H(\vr) \\
G(\eta) = kL\eta \log\eta + \zeta \eta^2, \quad &\text{ thus } q(\eta) = \eta G'(\eta) -G(\eta).
\end{aligned}
$$
Recall that both $H(z)$ and $G(s)$ are convex functions on $(0,\infty)$. We further define
\begin{equation} \label{def_E}
\begin{aligned}
\mathcal E_1(t) & = \mathcal E_1[(\vr,\vu),(\tvr,\tvu)](t) := \int_\Omega \Big(\frac 12 \vr |\vu-\tvu|^2 + H(\vr) -H(\tvr) -H'(\tvr) (\vr-\tvr)\Big) (t,\cdot)\dx \\
\mathcal E_2(t) & = \mathcal E_2[\eta,\teta](t) := \int_\Omega \big(G(\eta) -G(\teta) -G'(\teta) (\eta-\teta)\big) (t,\cdot)\dx.
\end{aligned}
\end{equation} 

Since both densities and also $\eta$ and $\teta$ are bounded away from zero and $G$ and $H$ are both convex, we have 
\begin{lemma} \label{convexity}
Under our assumptions from Theorem \ref{main_2}, we have
\begin{equation} \label{quadratic}
\begin{aligned}
H(\vr) -H(\tvr) -H'(\tvr) (\vr-\tvr) &\geq C(T,\overline{\vr_0})|\vr-\tvr|^2\\
G(\eta) -G(\teta) -G'(\teta) (\eta-\teta) &\geq C(T,\overline{\eta_0})|\eta-\teta|^2. 
\end{aligned}
\end{equation}
\end{lemma}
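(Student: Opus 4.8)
The plan is to reduce both inequalities in \eqref{quadratic} to the elementary fact that a $C^2$ convex function whose second derivative is bounded below by a positive constant on a compact interval generates a Bregman distance (relative energy) controlled from below by the square of the increment, and then to feed in the two-sided bounds on $\vr,\tvr$ and on $\eta,\teta$ recorded just above the lemma.

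First I would record, by Taylor's formula with integral remainder, that for any $f\in C^2$ and any $z,\tilde z$ lying in an interval $I$ on which $f''\geq m$,
\[
f(z)-f(\tilde z)-f'(\tilde z)(z-\tilde z)=\int_{\tilde z}^{z}f''(s)\,(z-s)\,{\rm d}s\geq \tfrac m2 |z-\tilde z|^2 ,
\]
where the final inequality follows by computing $\int (z-s)\,{\rm d}s$ separately in the two cases $z\geq\tilde z$ and $z\leq\tilde z$, using that the segment joining $\tilde z$ to $z$ stays inside $I$ because $I$ is convex.

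Next I would exhibit the constant $m$ in each case. For $H(\vr)=\vr\psi(\vr)$ one has $H'(\vr)=\psi(\vr)+\vr\psi'(\vr)$ and $H''(\vr)=2\psi'(\vr)+\vr\psi''(\vr)$, which is continuous and \emph{strictly} positive on $(0,\infty)$ by the hypothesis $2\psi'+\vr\psi''>0$ accompanying \eqref{pressure}; since $\vr(t,x),\tvr(t,x)\in[C_1(T)\underline{\vr_0},C_2(T)\overline{\vr_0}]=:I_\vr$, a compact subinterval of $(0,\infty)$, we may take $m_H:=\min_{I_\vr}H''>0$. For $G(\eta)=kL\eta\log\eta+\zeta\eta^2$ one has $G''(\eta)=\frac{kL}{\eta}+2\zeta\geq 2\zeta>0$ for every $\eta>0$, so $m_G:=2\zeta$ works (here the lower bound on $\eta,\teta$ is not even needed for the estimate itself, only their positivity, which guarantees that $G(\eta)$, $G(\teta)$ and $G'(\teta)$ are finite). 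Applying the displayed inequality with $f=H,\ m=m_H$ and with $f=G,\ m=m_G$, and integrating over $\Omega$, yields \eqref{quadratic} with $C(T,\overline{\vr_0})=\tfrac12 m_H$ — which depends only on $T$ and on the bounds $\underline{\vr_0},\overline{\vr_0}$ — and with $C(T,\overline{\eta_0})=\zeta$.

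I do not anticipate a genuine obstacle: the only point requiring care is bookkeeping, namely making sure the argument $\tvr+s(\vr-\tvr)$, $s\in[0,1]$, never leaves the compact interval on which $H''$ is bounded below, which is immediate since $I_\vr$ is convex and the density bounds quoted before the lemma hold pointwise a.e.\ on $(0,T)\times\Omega$. The regularity $\psi\in C^3(0,\infty)$ (equivalently $p\in C^2((0,\infty))$) is more than enough to make the Taylor expansion legitimate, and $G$ is smooth on $(0,\infty)$ by inspection.
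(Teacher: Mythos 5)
Your proof is correct and is essentially the standard argument the paper relies on: the paper simply cites \cite[Lemma 5.2]{Lu-Z.Zhang18} together with the strict positivity (and boundedness) of $\vr,\tvr,\eta,\teta$, and that cited lemma rests on exactly the Taylor/Bregman-distance estimate you spell out. Your version is a complete, self-contained substitute, with the correct identification $H''(\vr)=2\psi'(\vr)+\vr\psi''(\vr)>0$ on the compact interval where both densities live and $G''(\eta)=kL\eta^{-1}+2\zeta\geq 2\zeta>0$; the only cosmetic remark is that your constant for $H$ depends on the lower density bound $\underline{\vr_0}$ as well as $\overline{\vr_0}$, which is consistent with the paper's (slightly imprecise) notation $C(T,\overline{\vr_0})$.
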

\begin{proof}
The proof follows from \cite[Lemma 5.2]{Lu-Z.Zhang18}. We used the fact that all quantities $\vr$, $\tvr$, $\eta$, $\teta$ are  strictly  positive.
\end{proof}  

We now start with the computations of the main inequality, often called the relative entropy (or energy) inequality. For a moment we only assume that $(\vr,\vu,\tn{T},\eta)$ is a weak solution to our problem \eqref{01a}--\eqref{04a} corresponding to the initial conditions $(\vr_0,\vu_0,\tn{T}_0,\eta_0)$ (satisfying assumptions of Theorem \ref{main_2}) and $(\tvr,\tvu,\tnT,\teta)$ are sufficiently regular functions and only the homogeneous Dirichlet conditions for $\tvu$ must be assumed now. The other assumptions on these functions will be used later.

Since $\eta$ is a renormalized   to \eqref{03a}, we have
\begin{equation} \label{5.1} 
\int_\Omega G(\eta)(t,\cdot)\dx = \int_\Omega G(\eta_0)\dx - \int_0^t \int_\Omega q(\eta) \Div\vu \dx\dtau -2\varepsilon \int_0^t \int_\Omega (2kL |\nabla \eta^{\frac 12}|^2 + \zeta |\nabla \eta|^2)\dx\dtau.
\end{equation}
Similarly for the density we have (recall that the density is a bounded function and thus the renormalized equation holds)
\begin{equation} \label{5.3} 
\int_\Omega H(\vr)(t,\cdot)\dx = \int_\Omega H(\vr_0)\dx - \int_0^t \int_\Omega p(\vr) \Div\vu \dx\dtau.
\end{equation}

We further use as a test function in the weak formulation of \eqref{01a} the function $H'(\tvr)$. It results into (by our assumptions in Theorem \ref{main_2} it is an eligible test function)
\begin{equation} \label{5.4}
\int_\Omega \vr H'(\tvr) (t,\cdot)\dx = \int_\Omega \vr_0 H'(\tvr_0) \dx + \int_0^t\int_\Omega \Big(\vr \pder{H'(\tvr)}{t} + \vr\vu \cdot \nabla H'(\tvr)\Big) \dx\dtau.
\end{equation}
We can also use as a test function in the weak formulation of \eqref{01a} the function $\frac 12 |\tvu|^2$ and have
\begin{equation} \label{5.5} 
\int_\Omega \frac 12 \vr |\tvu|^2 (t,\cdot) \dx = \int_\Omega \frac 12 \vr_0 |\tvu_0|^2 \dx + \int_0^t \int_\Omega \Big(\vr\tvu \cdot \pder{\tvu}{t} + \vr(\vu \cdot \nabla \tvu)\cdot \tvu\Big) \dx\dtau.
\end{equation}
Next identity we use is the weak formulation of \eqref{02a} with the test function $\tvu$. Then
\begin{equation} \label{5.6}
\begin{aligned} 
&\int_\Omega \vr\vu \cdot \tvu (t,\cdot)\dx \geq \int_\Omega \vr_0\vu_0\cdot \tvu_0 \dx + \frac 12 \int_\Omega \vr|\vu|^2 (t,\cdot)\dx -\frac 12 \int_\Omega \vr_0 |\vu_0|^2\dx \\
& + \int_0^t\int_\Omega (\Lambda (\Div\vu) - \Lambda (\Div \tvu))\dx\dtau + \int_0^t \int_\Omega \Big(\vr\vu \cdot \pder{\tvu}{t} + \vr(\vu \otimes\vu) :\nabla \tvu\Big) \dx\dtau \\
& + \int_0^t \int_\Omega \Big(2\mu_0(1+ |\tn{D}^d(\vu)|^2)^{\frac{r-2}{2}} \tn{D}^d(\vu-\tvu) + p(\vr) \Div (\tvu-\vu)\Big) \dx\dtau \\
& + \int_0^t \int_\Omega \Big(\vr \vc{f}\cdot (\tvu-\vu) - \tn{T} :\nabla (\tvu-\vu) + q(\eta) \Div(\tvu-\vu)\Big)\dx\dtau.
\end{aligned}
\end{equation}
Using $\frac 12 \vr|\vu-\tvu|^2 = \frac 12 \vr|\vu|^2 - \vr\vu\cdot \tvu + \frac 12 \vr|\tvu|^2$ and and taking \eqref{5.1} + \eqref{5.3} $-$\eqref{5.4} + \eqref{5.5} $-$ \eqref{5.6} we end up with 
\begin{equation} \label{5.7}
\begin{aligned}
&\int_\Omega \Big( \frac 12 \vr|\vu-\tvu|^2 + H(\vr)-H'(\tvr)\vr + G(\eta)\Big)(t,\cdot)\dx +  \int_0^t\int_\Omega (\Lambda (\Div\vu) - \Lambda (\Div \tvu))\dx\dtau \\
& + \int_0^t \int_\Omega 2\mu_0(1+ |\tn{D}^d(\vu)|^2)^{\frac{r-2}{2}} \tn{D}^d(\vu):\tn{D}^d(\vu-\tvu) \dx\dtau + 2\varepsilon \int_0^t \int_\Omega (2kL |\nabla \eta^{\frac 12}|^2 + \zeta |\nabla \eta|^2)\dx\dtau \\
&\leq \int_\Omega \Big( \frac 12 \vr_0|\vu_0-\tvu_0|^2 + H(\vr_0)-H'(\tvr_0)\vr_0 + G(\eta_0)\Big)\dx +\int_0^t \int_\Omega (\tn{T}: \nabla(\tvu -\vu) -q(\eta) \Div \tvu)\dx\dtau \\
& + \int_0^t \int_\Omega \Big(\vr\vc{f} \cdot (\vu-\tvu) -p(\vr)\Div \tvu-\vr \pder{H'(\tvr)}{t}-\vr\vu \cdot\nabla H'(\tvr) \Big)\dx\dtau \\
& + \int_0^t\int_\Omega \Big( \vr\tvu\cdot\pder{\tvu}{t} + \vr(\vu \cdot \nabla \tvu)\cdot \tvu -\vr\vu \cdot \pder{\tvu}{t} -\vr(\vu\otimes \vu):\nabla \tvu\Big)\dx\dtau.
\end{aligned}
\end{equation}
We now compute 
$$
\vr \pder{H'(\tvr)}{t}+ \vr\vu \cdot \nabla H'(\tvr) = \tvr \pder{H'(\tvr)}{t} + \tvr \tvu \cdot \nabla H'(\tvr) +(\vr-\tvr)  \pder{H'(\tvr)}{t} + (\vr\vu-\tvr\tvu)\cdot \nabla H'(\tvr) 
$$
and using
$$
p(\tvr) = \tvr H'(\tvr) -H(\tvr)
$$
we get
\begin{equation} \label{5.8}
\vr \pder{H'(\tvr)}{t}+ \vr\vu \cdot \nabla H'(\tvr) = (\vr-\tvr)  \pder{H'(\tvr)}{t} + (\vr\vu-\tvr\tvu)\cdot \nabla H'(\tvr) + \pder{p(\tvr)}{t} + \Div (p(\tvr)\tvu) -p(\tvr)\Div\tvu.
\end{equation}
We plug in \eqref{5.8} into \eqref{5.7} and get by straightforward computations
\begin{equation} \label{5.9}
\begin{aligned}
&\int_\Omega \Big( \frac 12 \vr|\vu-\tvu|^2 + H(\vr)-H(\tvr)- (\vr-\tvr) H'(\tvr) + G(\eta)\Big)(t,\cdot)\dx \\
&+  \int_0^t\int_\Omega \big(\Lambda (\Div\vu) - \Lambda (\Div \tvu) - \Lambda'(\Div\tvu) (\Div \vu-\Div \tvu)\big)\dx\dtau \\
& + \int_0^t \int_\Omega 2\mu_0\Big[(1+ |\tn{D}^d(\vu)|^2)^{\frac{r-2}{2}} \tn{D}^d(\vu)- (1+ |\tn{D}^d(\tvu)|^2)^{\frac{r-2}{2}}\tn{D}^d(\tvu)\Big]:(\tn{D}^d(\vu)-\tn{D}^d(\tvu)) \dx\dtau \\
&+ 2\varepsilon \int_0^t \int_\Omega (2kL |\nabla \eta^{\frac 12}|^2 + \zeta |\nabla \eta|^2)\dx\dtau \\
&\leq \int_\Omega \Big( \frac 12 \vr_0|\vu_0-\tvu_0|^2 + H(\vr_0)-H(\tvr_0) -(\vr_0-\tvr_0)H'(\tvr_0) + G(\eta_0)\Big)\dx \\
& + \int_0^t \int_\Omega \Big(\Lambda'(\Div\tvu) (\Div \tvu-\Div \vu) + 2\mu_0 (1+ |\tn{D}^d(\tvu)|^2)^{\frac{r-2}{2}}\tn{D}^d(\tvu):(\tn{D}^d(\tvu)-\tn{D}^d(\vu))\Big)\dx\dtau \\
&-\int_0^t \int_\Omega \Big((\vr-\tvr)  \pder{H'(\tvr)}{t} + (\vr\vu-\tvr\tvu)\cdot \nabla H'(\tvr)-(p(\tvr)-p(\vr))\Div\tvu\Big)\dx\dtau \\
&+\int_0^t \int_\Omega (\tn{T}: \nabla(\tvu -\vu) -q(\eta) \Div \tvu)\dx\dtau \\
& + \int_0^t \int_\Omega \Big[\vr\vc{f} \cdot (\vu-\tvu) + \vr(\tvu-\vu)\cdot \Big( \pder{\tvu}{t} +\vu \cdot \nabla \tvu\Big)\Big]\dx\dtau.
\end{aligned}
\end{equation}
Exactly as for \eqref{5.8}, we get 
\begin{equation} \label{5.10}
\eta \pder{G'(\teta)}{t}+ \vr\vu \cdot \nabla G'(\teta) = (\eta-\teta)  \pder{G'(\teta)}{t} + (\eta\vu-\teta\tvu)\cdot \nabla G'(\teta) + \pder{q(\teta)}{t} + \Div (q(\teta)\tvu) -q(\teta)\Div\tvu.
\end{equation}
Using as test function in the weak formulation of \eqref{03a} the function $G'(\teta)$ we get
\begin{equation} \label{5.11}
\int_\Omega \eta G'(\teta)(t,\cdot)\dx = \int_\Omega \eta_0 G'(\teta_0) \dx + \int_0^t \int_\Omega \Big[ \eta \pder{G'(\teta)}{t} + \eta \vu \cdot \nabla G'(\teta) -\varepsilon \nabla \eta \cdot\nabla G'(\teta)\Big] \dx\dtau.
\end{equation}
Summing up \eqref{5.9} (integrated over $\Omega$) with \eqref{5.10} and \eqref{5.11} (multiplied by $-1$) yields
\begin{equation} \label{5.12}
\begin{aligned}
&\int_\Omega \Big( \frac 12 \vr|\vu-\tvu|^2 + H(\vr)-H(\tvr)- (\vr-\tvr) H'(\tvr) + G(\eta)-G(\teta)-G'(\teta)(\eta-\teta) \Big)(t,\cdot)\dx \\
&+  \int_0^t\int_\Omega \big(\Lambda (\Div\vu) - \Lambda (\Div \tvu) - \Lambda'(\Div\tvu) (\Div \vu-\Div \tvu)\big)\dx\dtau \\
& + \int_0^t \int_\Omega 2\mu_0\Big[(1+ |\tn{D}^d(\vu)|^2)^{\frac{r-2}{2}} \tn{D}^d(\vu)- (1+ |\tn{D}^d(\tvu)|^2)^{\frac{r-2}{2}}\tn{D}^d(\tvu)\Big]:(\tn{D}^d(\vu)-\tn{D}^d(\tvu)) \dx\dtau \\
&+ 2\varepsilon \int_0^t \int_\Omega (2kL |\nabla \eta^{\frac 12}|^2 + \zeta |\nabla \eta|^2)\dx\dtau \\
&\leq \int_\Omega \Big( \frac 12 \vr_0|\vu_0-\tvu_0|^2 + H(\vr_0)-H(\tvr_0) -(\vr_0-\tvr_0)H'(\tvr_0) + G(\eta_0)-G(\teta_0)-G'(\teta_0)(\eta_0-\teta_0) \Big)\dx \\
& + \int_0^t \int_\Omega \Big(\Lambda'(\Div\tvu) (\Div \tvu-\Div \vu) + 2\mu_0 (1+ |\tn{D}^d(\tvu)|^2)^{\frac{r-2}{2}}\tn{D}^d(\tvu):(\tn{D}^d(\tvu)-\tn{D}^d(\vu))\Big)\dx\dtau \\
&-\int_0^t \int_\Omega \Big((\vr-\tvr)  \pder{H'(\tvr)}{t} + (\vr\vu-\tvr\tvu)\cdot \nabla H'(\tvr)-(p(\tvr)-p(\vr))\Div\tvu\Big)\dx\dtau \\
&-\int_0^t \int_\Omega \Big((\eta-\teta)  \pder{G'(\teta)}{t} + (\eta\vu-\teta\tvu)\cdot \nabla G'(\teta)-(q(\teta)-q(\eta))\Div\tvu\Big)\dx\dtau \\
&+\int_0^t \int_\Omega (\tn{T}: \nabla(\tvu -\vu) + \varepsilon \nabla \eta \cdot \nabla G'(\teta))\dx\dtau \\
& + \int_0^t \int_\Omega \Big[\vr\vc{f} \cdot (\vu-\tvu) + \vr(\tvu-\vu)\cdot \Big( \pder{\tvu}{t} +\vu \cdot \nabla \tvu\Big)\Big]\dx\dtau.
\end{aligned}
\end{equation}
Since 
$$
\nabla \eta \cdot \nabla G'(\teta) = \nabla \eta (kL \teta^{-1} + 2\zeta)\cdot \nabla \teta,
$$
and
$$
\begin{aligned}
4 |\nabla \eta^{\frac 12}|^2 - \teta^{-1}\nabla\eta \cdot \nabla \teta &= 4 \big| \nabla(\eta^{\frac 12} -\teta^{\frac 12})\big|^2 +4 \nabla \teta^{\frac 12} \cdot \nabla (\eta^{\frac 12} -\teta^{\frac 12}) 
+ 4 \nabla \eta^{\frac 12} \cdot \nabla \teta^{\frac 12} (1-\eta^{\frac 12}\teta^{\frac 12}) \\
|\nabla \eta|^2 -\nabla \eta \cdot \nabla \teta & = |\nabla (\eta-\teta)|^2 + \nabla \teta \cdot \nabla (\eta-\teta),
\end{aligned}
$$
we have
\begin{equation} \label{5.13}
\begin{aligned}
&\mathcal E_1[(\vr,\vu),(\tvr,\tvu)](t) + \mathcal E_2[\eta,\teta](t) +  \int_0^t\int_\Omega (\Lambda (\Div\vu) - \Lambda (\Div \tvu) - \Lambda'(\Div\tvu) (\Div \vu-\Div \tvu))\dx\dtau \\
& + \int_0^t \int_\Omega 2\mu_0\Big[(1+ |\tn{D}^d(\vu)|^2)^{\frac{r-2}{2}} \tn{D}^d(\vu)- (1+ |\tn{D}^d(\tvu)|^2)^{\frac{r-2}{2}}\tn{D}^d(\tvu)\Big]:(\tn{D}^d(\vu)-\tn{D}^d(\tvu)) \dx\dtau \\
& + 2\varepsilon \int_0^t \int_\Omega \Big(2kL \big|\nabla (\eta^{\frac 12}-\teta^{\frac 12})\big|^2 + \zeta |\nabla (\eta-\teta)|^2\Big)\dx\dtau \\
& \leq \mathcal E_1[(\vr_0,\vu_0),(\tvr_0,\tvu_0)] + \mathcal E_2[\eta_0,\teta_0] + \int_0^t \mathcal R \dtau,
\end{aligned}
\end{equation}
where $\mathcal R(\tau) = \sum_{j=1}^5 \mathcal R_j (\tau)$ and
$$
\begin{aligned}
\mathcal R_1(\tau) &= \int_\Omega \Big[\vr\vc{f} \cdot (\vu-\tvu) + \vr(\tvu-\vu)\cdot \Big( \pder{\tvu}{t} +\vu \cdot \nabla \tvu\Big)\Big](\tau,\cdot)\dx \\
&+  \int_\Omega 2\mu_0 (1+ |\tn{D}^d(\tvu)|^2)^{\frac{r-2}{2}}\tn{D}^d(\tvu):(\tn{D}^d(\tvu)-\tn{D}^d(\vu))(\tau,\cdot) \dx \\
& + \int_\Omega \Big(\Lambda'(\Div\tvu) (\Div \tvu-\Div \vu) + (\tvr-\vr)  \pder{H'(\tvr)}{t} + (\tvr\tvu-\vr\vu)\cdot \nabla H'(\tvr)\Big)(\tau,\cdot)\dx \\
& + \int_\Omega (p(\tvr)-p(\vr)) \Div \tvu (\tau,\cdot) \dx \\
\mathcal R_2(\tau) & = \int_\Omega \Big((\teta-\eta)  \pder{G'(\teta)}{t} + (\teta\tvu-\eta\vu)\cdot \nabla G'(\teta)+(q(\teta)-q(\eta))\Div\tvu\Big)(\tau,\cdot)\dx \\
\mathcal R_3(\tau) & = -4\varepsilon kL \int_\Omega \Big[ \nabla \teta^{\frac 12} \cdot \nabla (\eta^{\frac 12} -\teta^{\frac 12}) + \nabla \eta^{\frac 12} \cdot \nabla \teta^{\frac 12} (1-\eta^{\frac 12}\teta^{-\frac 12}) \Big](\tau,\cdot)\dx \\
\mathcal R_4(\tau) &= -2\zeta\int_\Omega \nabla \teta \cdot \nabla (\eta-\teta)(\tau,\cdot)\dx \\
\mathcal R_5(\tau) &= \int_\Omega \tn{T}: \nabla (\tvu-\vu)(\tau,\cdot)\dx.
\end{aligned}
$$
We now assume that $(\tvr,\tvu,\teta,\tnT)$ is a strong solution to our problem \eqref{01a}--\eqref{04a} corresponding to the same data. Therefore $\mathcal E_1[(\vr_0,\vu_0),(\tvr_0,\tvu_0)] + \mathcal E_2[\eta_0,\teta_0] =0$ and we have
$$
\begin{aligned}
&\pder{\tvu}{t} + \tvu \cdot \nabla \tvu + \tvr^{-1} \nabla p(\tvr) -\tvr^{-1} \nabla\Lambda(\Div \tvu) -\tvr^{-1} 2\mu_0 \Div ((1+|\tn{D}^d(\tvu)|^2)^{\frac {r-2}{2}}\tn{D}^d(\tvu)) \\
&= \tvr^{-1} \Div \tnT -\tvr^{-1} \nabla q(\teta) + \vc{f}.
\end{aligned}
$$
Plugging this relation into $\mathcal R_1$ together with $\tvr^{-1} \nabla p(\tvr) =\nabla H'(\tvr)$, we end up with  
$$
\begin{aligned}
\mathcal R_1(\tau) &= \int_\Omega  \vr((\vu-\tvu)\cdot \nabla \tvu)\cdot (\tvu-\vu) (\tau,\cdot)\dx \\
 &+ 2\mu_0 \int_\Omega \Div((1+ |\tn{D}^d(\tvu)|^2)^{\frac{r-2}{2}}\tn{D}^d(\tvu))\cdot (\tvu-\vu) (\vr \tvr^{-1}-1) (\tau,\cdot)\dx \\
& + \int_\Omega \nabla \Lambda'(\Div\tvu) \cdot ( \tvu- \vu) (\vr \tvr^{-1}-1) (\tau,\cdot)\dx \\
& -\int_\Omega \Big[ \vr \nabla H'(\tvr) \cdot (\tvu-\vu) - (\tvr-\vr)  \pder{H'(\tvr)}{t} - (\tvr\tvu-\vr\vu)\cdot \nabla H'(\tvr)\Big](\tau,\cdot)\dx \\
& + \int_\Omega \Big[ \vr\tvr^{-1} \Div \tnT \cdot (\tvu-\vu) - \vr\tvr^{-1} \nabla q(\teta)\cdot (\tvu-\vu) + (p(\tvr)-p(\vr)) \Div \tvu\Big] (\tau,\cdot) \dx
\end{aligned}
$$
We now use that $\tvr$ is a renormalized solution to \eqref{01a} and thus we may rewrite the fourth term to the form
$$
\begin{aligned}
-\vr \nabla H'(\tvr) \cdot &(\tvu-\vu)   + (\tvr-\vr)  \pder{H'(\tvr)}{t} + (\tvr\tvu-\vr\vu)\cdot \nabla H'(\tvr) \\
&= (\tvr-\vr) \Big[ \pder{H'(\tvr)}{t} + \Div (H'(\tvr) \tvu) + (H''(\tvr)\tvr-H'(\tvr))\Div \tvu\Big] -(\tvr-\vr) H''(\tvr)\tvr \Div \tvu \\
& =-(\tvr-\vr) p'(\tvr) \Div \tvu.
\end{aligned}
$$
Therefore we may write $\mathcal R_1(\tau) = \sum_{j=1}^7 \mathcal R_1^j (\tau)$, where 
$$
\begin{aligned}
\mathcal R_1^1(\tau) &= \int_\Omega  \vr((\vu-\tvu)\cdot \nabla \tvu)\cdot (\tvu-\vu) (\tau,\cdot)\dx \\
\mathcal R_1^2(\tau) &= 2\mu_0 \int_\Omega \Div((1+ |\tn{D}^d(\tvu)|^2)^{\frac{r-2}{2}}\tn{D}^d(\tvu))\cdot (\tvu-\vu) (\vr \tvr^{-1}-1) (\tau,\cdot)\dx \\
\mathcal R_1^3(\tau) &= \int_\Omega \nabla \Lambda'(\Div\tvu) \cdot( \tvu-\vu) (\vr \tvr^{-1}-1) (\tau,\cdot)\dx \\
\mathcal R_1^4(\tau) &= \int_\Omega \Div \tvu (p(\tvr)-p(\vr) -p'(\tvr) (\tvr-\vr)) (\tau,\cdot)\dx \\
\mathcal R_1^5(\tau) &= \int_\Omega (1-\vr\tvr^{-1}) \nabla q(\teta)\cdot (\tvu-\vu) (\tau,\cdot)\dx + \int_\Omega (\vr\tvr^{-1}-1) \Div \tnT \cdot (\tvu-\vu)(\tau,\cdot)\dx \\
\mathcal R_1^6(\tau) &= \int_\Omega \teta \nabla G'(\teta) \cdot (\vu-\tvu) (\tau,\cdot)\dx \\ 
\mathcal R_1^7(\tau) & = \int_\Omega \tnT: \nabla (\vu-\tvu) (\tau,\cdot)\dx.
\end{aligned}
$$
We clearly have 
$$
\mathcal R_1^7 (\tau) + \mathcal R_5(\tau) = \int_\Omega (\tnT-\tn{T}): \nabla (\vu-\tvu) (\tau,\cdot)\dx.
$$
Next we combine $\mathcal R_2(\tau)$ and $\mathcal R_1^6(\tau)$. Similarly ar for the density, we use that $\teta$ is renormalized solution to the corresponding equation \eqref{03a} and we also use the relation between $G$ and $q$. We get
$$
\begin{aligned}
&(\teta-\eta) \pder{G'(\teta)}{t} + (\teta\tvu-\eta\vu)\cdot \nabla G'(\teta) - \teta (\tvu-\vu) \cdot \nabla G'(\teta) + (q(\teta)-q(\eta)) \Div \tvu \\
& = (\teta-\eta) \Big[\pder{G'(\teta)}{t} + \Div( G'(\teta) \tvu) + (G''(\teta)\teta-G'(\teta)) \Div \tvu -\varepsilon \Delta \teta G''(\teta)\Big] \\
&\qquad \qquad -(\teta-\eta)(\tvu-\vu)\cdot \nabla G'(\teta) -(\teta-\eta) G''(\teta)\teta \Div \tvu + \varepsilon \Delta \teta G''(\teta) (\teta-\eta) + (q(\teta)-q(\eta)) \Div \tvu \\
&= \varepsilon \Delta \teta G''(\teta) (\teta-\eta)+ \Div \tvu (q(\teta)-q(\eta) -q'(\teta) (\teta-\eta)) -(\teta-\eta)(\tvu-\vu)\cdot \nabla G'(\teta).
\end{aligned}
$$
Thus
$$
\begin{aligned}
\mathcal R_2(\tau) +\mathcal R_1^6(\tau) &= \int_\Omega \varepsilon \teta^{-1} \Delta \teta q'(\teta) (\teta-\eta)(\tau,\cdot)\dx \\
& + \int_\Omega \Big[\Div \tvu (q(\teta)-q(\eta) -q'(\teta) (\teta-\eta)) - (\teta-\eta)(\tvu-\vu)\cdot \nabla G'(\teta)\Big] (\tau,\cdot)\dx.
\end{aligned}
$$
Altogether, we get
$$
\begin{aligned}
\mathcal R(\tau) &= \int_\Omega  \vr((\vu-\tvu)\cdot \nabla \tvu)\cdot (\tvu-\vu) (\tau,\cdot)\dx  \\
&+ 2\mu_0 \int_\Omega \Div((1+ |\tn{D}^d(\tvu)|^2)^{\frac{r-2}{2}}\tn{D}^d(\tvu))\cdot (\tvu-\vu) (\vr \tvr^{-1}-1) (\tau,\cdot)\dx \\
& + \int_\Omega \nabla \Lambda'(\Div\tvu) \cdot( \tvu- \vu) (\vr \tvr^{-1}-1) (\tau,\cdot)\dx  - \int_\Omega (\teta-\eta)(\tvu-\vu)\cdot \nabla G'(\teta) (\tau,\cdot)\dx \\
& + \int_\Omega \Div \tvu (p(\tvr)-p(\vr) -p'(\tvr) (\tvr-\vr)) (\tau,\cdot)\dx + \int_\Omega \Div \tvu (q(\teta)-q(\eta) -q'(\teta) (\teta-\eta)) (\tau,\cdot)\dx \\
&+ \int_\Omega (1-\vr\tvr^{-1}) \nabla q(\teta)\cdot (\tvu-\vu) (\tau,\cdot)\dx + \int_\Omega (\vr\tvr^{-1}-1) \Div \tnT \cdot (\tvu-\vu)(\tau,\cdot)\dx \\
&+ \varepsilon kL \int_\Omega \Big[ 4 \teta^{-\frac 12} (\teta^{\frac 12}-\eta^{\frac 12}) \nabla \teta^{\frac 12} \cdot \nabla (\teta^{\frac 12}-\eta^{\frac 12}) -\teta^{-1} \Delta \teta (\teta^{\frac 12}-\eta^{\frac 12})^2\Big](\tau,\cdot) \dx \\
&+ \int_\Omega (\tn{T}-\tnT): \nabla (\vu-\tvu) (\tau,\cdot)\dx\\
& = \sum_{j=1}^{10} \mathcal L_j(\tau).
\end{aligned}
$$
Above, we used that the corresponding part of $\mathcal R_1^6(\tau)$ (with $kL$), the corresponding part of $\mathcal R_2(\tau)$ and $\mathcal R_3(\tau)$ give together $\mathcal L_9(\tau)$ and $\mathcal R_1^6(\tau)$ (with $\zeta$), the corresponding part of $\mathcal R_2(\tau)$ and $\mathcal R_4(\tau)$ sum up to zero.

We now estimate all terms in the sum $\sum_{j=1}^{10} \mathcal L_j(\tau)$. We have
$$
\mathcal L_1(\tau) = \int_\Omega  \vr((\vu-\tvu)\cdot \nabla \tvu)\cdot (\tvu-\vu) (\tau,\cdot)\dx \leq \|\nabla\tvu\|_{L^\infty}(\tau) \int_\Omega \vr|\vu-\tvu)|^2 (\tau,\cdot)\dx \leq F_1(\tau) \mathcal E_1(\tau),
$$
where $\mathcal F_1(\tau) \in L^1(0,T)$ provided $\nabla \tvu\in L^1(0, T;L^\infty(\Omega;\R^3))$. Further
$$
\begin{aligned}
\mathcal L_2(\tau) &= 2\mu_0 \int_\Omega \Div((1+ |\tn{D}^d(\tvu)|^2)^{\frac{r-2}{2}}\tn{D}^d(\tvu))\cdot (\tvu-\vu) (\vr \tvr^{-1}-1) (\tau,\cdot)\dx \leq \|J\|_3 \|\tvu-\vu\|_6 \|\tvr-\vr\|_2 \\
& \leq \delta \|\nabla(\tvu-\vu)\|_2^2 + \|J\|_3^2 \|\tvr-\vr\|_2^2 = \delta \|\nabla(\tvu-\vu)\|_2^2 + F_2(\tau) \mathcal E_1(\tau); 
\end{aligned}
$$
here the number $\delta$ denotes sufficiently small number so that the corresponding term can be later transferred to the left-hand side and $F_2\in L^1(0,T)$ provided $ \Div((1+ |\tn{D}^d(\tvu)|^2)^{\frac{r-2}{2}}\tn{D}^d(\tvu)) \in L^2(0,T;L^3(\Omega;\R^3))$. Next
$$
\begin{aligned}
\mathcal L_3(\tau) &= \int_\Omega \nabla \Lambda'(\Div\tvu) \cdot( \tvu- \vu) (\vr \tvr^{-1}-1) (\tau,\cdot)\dx \leq \| \nabla \Lambda'(\Div\tvu)\|_3 \|\tvu-\vu\|_6 \|\tvr-\vr\|_2 \\ 
&\leq \delta \|\nabla(\tvu-\vu)\|_2^2 + \|\nabla \Lambda'(\Div\tvu)\|_3^2 \|\tvr-\vr\|_2^2 \leq \delta \|\nabla(\tvu-\vu)\|_2^2 + F_3(\tau) \mathcal E_1(\tau),
\end{aligned}
$$
where $F_3\in L^1(0,T)$ provided $\nabla \Lambda'(\Div\tvu) \in L^2(0,T;L^3(\Omega;\R^3))$. Next term can be estimated rather similarly,
$$
\mathcal L_4(\tau) = \int_\Omega (\teta-\eta)(\tvu-\vu)\cdot \nabla G'(\teta) (\tau,\cdot)\dx \leq \|\teta-\eta\|_2 \|\tvu-\vu\|_6 \|\nabla G'(\teta)\|_3 \leq \delta \|\nabla(\tvu-\vu)\|_2^2 + F_4(\tau) \mathcal E_2(\tau).
$$
In order to have $F_4 \in L^1(0,T)$, we have to assume that $\nabla G'(\teta)\sim |\nabla \teta| (1+ \teta^{-1}) \in L^2(0,T;L^3(\Omega;\R^3))$. Further 
$$
\mathcal L_5(\tau) = \int_\Omega \Div \tvu (p(\tvr)-p(\vr) -p'(\tvr) (\tvr-\vr)) (\tau,\cdot)\dx \leq F_5(\tau) \|\vr-\tvr\|_2^2 \leq F_5(\tau) \mathcal E_1(\tau),
$$
where due to the boundedness of $\Div \tvu$ the term $F_5\in L^1(0,T)$ trivially. Exactly in the same way we estimate
$$
\mathcal L_6(\tau)  \int_\Omega \Div \tvu (q(\teta)-q(\eta) -q'(\teta) (\teta-\eta)) (\tau,\cdot)\dx \leq F_6(\tau) \|\teta-\eta\|_2^2 \leq F_6(\tau) \mathcal E_2(\tau).
$$
Next
$$
\begin{aligned}
\mathcal L_7(\tau) &= \int_\Omega (1-\vr\tvr^{-1}) \nabla q(\teta)\cdot (\tvu-\vu) (\tau,\cdot)\dx \leq C \|\nabla q(\teta)\|_3 \|\tvu-\vu\|_6\|\tvr-\vr\|_2 \\
&\leq \delta \|\nabla(\tvu-\vu)\|_2^2 + C \|\nabla q(\teta)\|_3^2 \|\tvr-\vr\|_2^2 
\leq  \delta \|\nabla(\tvu-\vu)\|_2^2 + F_7(\tau) \mathcal E_1(\tau),
\end{aligned}
$$
where we need to require that $\nabla q(\teta) \sim |\nabla \teta|(1+ \teta) \in L^2(0,T;L^3(\Omega;\R^3))$. We also used that $\tvr^{-1}$ is bounded.  Similarly also
$$
\begin{aligned}
\mathcal L_8(\tau) &= \int_\Omega (\vr\tvr^{-1}-1) \Div \tnT \cdot (\tvu-\vu)(\tau,\cdot)\dx \leq C \|\Div \tnT\|_3 \|\tvu-\vu\|_6\|\tvr-\vr\|_2 \\
&\leq \delta \|\nabla(\tvu-\vu)\|_2^2 + C \|\Div \tnT\|_3^2 \|\tvr-\vr\|_2^2 
\leq  \delta \|\nabla(\tvu-\vu)\|_2^2 + F_8(\tau) \mathcal E_1(\tau),
\end{aligned}
$$
where in order to have $F_8 \in L^1(0,T)$ we need that $\Div \tnT \in  L^2(0,T;L^3(\Omega;\R^3))$. The term $\mathcal L_9(\tau)$ is more complex and we divide it therefore into two parts. First we have
$$
\begin{aligned}
\mathcal L_9^1(\tau) & = \varepsilon kL \int_\Omega  4 \teta^{-\frac 12} (\teta^{\frac 12}-\eta^{\frac 12}) \nabla \teta^{\frac 12} \cdot \nabla (\teta^{\frac 12}-\eta^{\frac 12}) (\tau,\cdot)\dx \\
& \leq C \|\teta^{-1}\|_\infty \|\nabla\teta\|_\infty \|\teta^{\frac 12}-\eta^{\frac 12}\|_2 \|\nabla (\teta^{\frac 12}-\eta^{\frac 12})\|_2 \\
& \leq \delta \|\nabla (\teta^{\frac 12}-\eta^{\frac 12})\|_2^2 + \|\teta^{-1}\|_\infty^2 \|\nabla\teta\|_\infty^2 \|\teta^{\frac 12}-\eta^{\frac 12}\|_2^2 \leq \delta \|\nabla (\teta^{\frac 12}-\eta^{\frac 12})\|_2^2 + F_9^1(\tau) \mathcal E_2(\tau),
\end{aligned}
$$
provided $\nabla\teta\in L^2(0,T;L^\infty(\Omega;\R^3))$; above we used that $\teta^{-1}$ is bounded as well as
$$
|\teta^{\frac 12}-\eta^{\frac 12}|^2 = \Big(\frac{(\teta^{\frac 12}-\eta^{\frac 12})(\teta^{\frac 12}+\eta^{\frac 12})}{\teta^{\frac 12}+\eta^{\frac 12}}\Big)^2 \leq C|\teta-\eta|^2,
$$
as the denominator is bounded away from zero.
Slightly differently we deal with the part 
$$
\begin{aligned}
\mathcal L_9^2(\tau) &= \varepsilon kL \int_\Omega \teta^{-1} \Delta \teta (\teta^{\frac 12}-\eta^{\frac 12})^2(\tau,\cdot) \dx  = \\
& = \varepsilon kL \int_\Omega \Big[\teta^{-2} |\nabla \teta|^2 (\teta^{\frac 12}-\eta^{\frac 12})^2 - 2\teta^{-1} \nabla \teta\cdot \nabla (\teta^{\frac 12}-\eta^{\frac 12}) (\teta^{\frac 12}-\eta^{\frac 12})(\tau,\cdot) \dx \\
& \leq C\|\teta^{-1}\|_\infty^2 \|\nabla\teta\|_\infty^2 \| \teta^{\frac 12}-\eta^{\frac 12}\|_2^2 + C \|\teta^{-1}\|_\infty \|\nabla\teta\|_\infty |\teta^{\frac 12}-\eta^{\frac 12}\|_2 \|\nabla (\teta^{\frac 12}-\eta^{\frac 12})\|_2 \\
& \leq \delta \|\nabla (\teta^{\frac 12}-\eta^{\frac 12})\|_2^2 +  F_9^2(\tau) \|\teta-\eta\|_2^2 \leq \delta \|\nabla (\teta^{\frac 12}-\eta^{\frac 12})\|_2^2 +  F_9^2(\tau) \mathcal E_2(\tau),
\end{aligned}
$$
where we used similar estimate as above. In order to have $F_9^2\in L^1(0,T)$, we need again $\nabla \teta \in L^2(0,T;L^\infty(\Omega;\R^3))$. Finally, we have
$$
\mathcal L_{10}(\tau) =  \int_\Omega (\tn{T}-\tnT): \nabla (\vu-\tvu) (\tau,\cdot)\dx \leq \|\nabla (\tvu-\vu)\|_2 \|\tnT-\tn{T}\|_2 \leq \delta \|\nabla (\tvu-\vu)\|_2^2 + C\|\tnT-\tn{T}\|_2^2.
$$
We therefore obtain, due to \eqref{5.13}
\begin{equation} \label{5.14}
\begin{aligned}
&\mathcal E_1[(\vr,\vu),(\tvr,\tvu)](t) + \mathcal E_2[\eta,\teta](t) +  \frac 12 \int_0^t\int_\Omega (\Lambda (\Div\vu) - \Lambda (\Div \tvu) - \Lambda'(\Div\tvu) (\Div \vu-\Div \tvu))\dx\dtau \\
& + \frac 12 \int_0^t \int_\Omega 2\mu_0\Big[(1+ |\tn{D}^d(\vu)|^2)^{\frac{r-2}{2}} \tn{D}^d(\vu)- (1+ |\tn{D}^d(\tvu)|^2)^{\frac{r-2}{2}}\tn{D}^d(\tvu)\Big]:(\tn{D}^d(\vu)-\tn{D}^d(\tvu)) \dx\dtau \\
& + \varepsilon \int_0^t \int_\Omega \Big(2kL \big|\nabla (\eta^{\frac 12}-\teta^{\frac 12})\big|^2 + \zeta |\nabla (\eta-\teta)|^2\Big)\dx\dtau  \\
&  \leq \int_0^t F(\tau) \big(\mathcal E_1[(\vr,\vu),(\tvr,\tvu)](\tau) + \mathcal E_2[\eta,\teta](\tau)\big) \dtau + \int_0^t \|\tnT-\tn{T}\|_2^2(\tau)\dtau,
\end{aligned}
\end{equation}
where under the assumptions above, $F \in L^1(0,T)$.
However, we have no control of the term $\|\tnT-\tn{T}\|_2^2$ yet and we need to establish it. To this aim, we use equations \eqref{04a} both for $\tn{T}$ in the weak form and for $\tnT$ in the strong form. This is seemingly straightforward, however, we must be careful. A straightforward calculation would be to take the weak formulation for $\tn{T}$ and use as a test function $\tn{T}-\tnT$ and then do the same for for $\tnT$ and subtract it. This, however, contains a few problems with integrability of the term $\int_0^t \int_\Omega \Div (\tn{T}\vu)\cdot \tn{T}$ as well as with integrability of the time derivative of $\tn{T}$. We therefore carefully explain the procedure in detail.

First, we have 
\begin{equation} \label{T1}
\begin{aligned}
&\frac 12 \int_\Omega |\tn{T}|^2 (t,\cdot)\dx + \varepsilon \int_0^t \int_\Omega |\nabla \tn{T}|^2 \dx\dtau + \frac 1{2\lambda}\int_0^t \int_\Omega |\tn{T}|^2 \dtau\dx \\
&\leq \frac 12 \int_\Omega |\tn{T}_0|^2 \dx -\frac 12 \int_\Omega \Div \vu |\tn{T}|^2 \dx\dtau + 2 \int_0^t \int_\Omega (\nabla \vu \tn{T}):\tn{T}\dx\dtau + \frac{k}{2\lambda} \int_\Omega \eta {\rm Tr}\, \tn{T} \dx\dtau.
\end{aligned}
\end{equation}
We namely work with a form of equality \eqref{energy_est_T} and it is enough to perform all limit passages for this equality (as a result, we, however, get the inequality). The same, now equality, can be obtained for the regular solutions since by our assumptions, they are sufficiently regular (note that here we use that $\pder{\tnT}{t}$ must be integrable)
\begin{equation} \label{T2}
\begin{aligned}
&\frac 12 \int_\Omega |\tnT|^2 (t,\cdot)\dx + \varepsilon \int_0^t \int_\Omega |\nabla \tnT|^2 \dx\dtau + \frac 1{2\lambda}\int_0^t \int_\Omega |\tnT|^2 \dtau\dx \\
&= \frac 12 \int_\Omega |\tn{T}_0|^2 \dx -\frac 12 \int_\Omega \Div \tvu |\tnT|^2 \dx\dtau + 2 \int_0^t \int_\Omega (\nabla \tvu \tnT):\tnT\dx\dtau + \frac{k}{2\lambda} \int_\Omega \teta {\rm Tr}\, \tnT \dx\dtau.
\end{aligned}
\end{equation} 
Next, we can clearly use as test function in the weak formulation for $\tn{T}$ the function $\tnT$ (see \eqref{weak_extra_stress}) and get
\begin{equation} \label{T3}
\begin{aligned}
&\int_0^t\int_\Omega \tn{T}:\pder{\tnT}{t} \dx\dtau  + \int_0^t \int_\Omega \vu \tn{T} :: \nabla \tnT \dx\dtau - \varepsilon \int_0^t \int_\Omega \nabla \tn{T}::\nabla\tnT  \dx\dtau - \frac 1{2\lambda}\int_0^t \int_\Omega \tn{T}:\tnT \dtau\dx \\
&= \int_\Omega \tn{T}:\tnT (t,\cdot) \dx -\int_\Omega |\tn{T}_0|^2 \dx - \int_0^t \int_\Omega (\nabla \vu \tn{T} +\tn{T}\nabla^T \vu):\tnT\dx\dtau - \frac{k}{2\lambda} \int_\Omega \eta {\rm Tr}\, \tnT \dx\dtau.
\end{aligned}
\end{equation} 
Finally, we may multiply the equation for $\tnT$ by $\tn{T}$ and integrate by parts in space. We get
\begin{equation} \label{T4}
\begin{aligned}
&\int_0^t \int_\Omega \pder{\tnT}{t}:\tn{T} \dx\dtau + \int_0^t \int_\Omega {\rm Div}\,(\tvu\tnT):\tn{T}\dx\dtau - \int_0^t \int_\Omega (\nabla \tvu \tnT +\tn{T} \nabla^T \tnT):\tn{T} \dx\dtau \\
&= -\varepsilon \int_0^t \int_\Omega \nabla \tnT::\nabla \tn{T} \dx\dtau + \frac{k}{2\lambda} \int_\Omega \teta {\rm Tr}\, \tn{T}\dx\dtau -\frac{1}{2\lambda} \int_0^t \int_\Omega \tnT:\tn{T}\dx\dtau.
\end{aligned}
\end{equation}
We now take \eqref{T1} + \eqref{T2} + \eqref{T3} $-$ \eqref{T4}. Thus we conclude, after straightforward computations, that
$$
\begin{aligned}
&\frac 12 \int_\Omega |\tn{T}-\tnT|^2(t,\cdot)\dx + \varepsilon \int_0^t \int_\Omega |\nabla (\tn{T}-\tnT)|^2 \dx\dtau + \frac 1{2\lambda} \int_0^t \int_\Omega |\tn{T}-\tnT|^2\dx\dtau \\
&\leq \int_0^t \int_\Omega (\eta-\teta)({\rm Tr}\, \tn{T} -{\rm Tr}\, \tnT) \dx\dtau \\
&- \int_0^t \int_\Omega \Big(\frac 12 \Div \vu |\tn{T}|^2 + \frac 12\Div \tvu |\tnT|^2 + \vu \tn{T}::\nabla \tnT -{\rm Div}\, (\tvu\tnT):\tn{T}\Big)\dx\dtau  \\
& + 2\int_0^t\int_\Omega \Big((\nabla \vu \tn{T}):\tn{T} +  (\nabla\tvu \tnT):\tnT - (\nabla \vu \tn{T}):\tnT - (\nabla \tvu \tnT): \tn{T} \Big)\dx\dtau.
\end{aligned}
$$ 
Next we compute
$$
\begin{aligned}
&- \int_0^t \int_\Omega \Big(\frac 12 \Div \vu |\tn{T}|^2 + \frac 12\Div \tvu |\tnT|^2 + \vu \tn{T}::\nabla \tnT -{\rm Div}\, (\tvu\tnT):\tn{T}\Big)\dx\dtau  \\
& =\int_0^t \int_\Omega \Big(-\frac 12 \Div \vu |\tn{T}|^2 + \Div \vu \tn{T}:\tnT + (\tvu\cdot \nabla \tnT): \tnT + (\vu\cdot \nabla \tn{T}): \tnT  - (\tvu \cdot \nabla \tn{T}):\tnT\Big)\dx\dtau  \\
& =\int_0^t \int_\Omega \Big(-\frac 12 \Div \vu |\tn{T}-\tnT|^2 + \frac 12 \Div \vu |\tnT|^2 + (\tvu\cdot \nabla \tnT): \tnT + (\vu\cdot \nabla \tn{T}): \tnT  - (\tvu \cdot \nabla \tn{T}):\tnT\Big)\dx\dtau \\
& =\int_0^t \int_\Omega \Big(-\frac 12 \Div \vu |\tn{T}-\tnT|^2  + \big((\tvu-\vu)\cdot \nabla \tnT\big):\tnT -\big((\tvu-\vu)\cdot \nabla \tn{T}\big):\tnT\Big)\dx\dtau \\
& =\int_0^t \int_\Omega \Big(-\frac 12 \Div \vu |\tn{T}-\tnT|^2  + \big((\tvu-\vu)\cdot \nabla (\tnT-\tn{T})\big):\tnT\Big)\dx\dtau.
\end{aligned}
$$
Similarly
$$
\begin{aligned}
&\int_0^t\int_\Omega \Big((\nabla \vu \tn{T}):\tn{T} +  (\nabla\tvu \tnT):\tnT - (\nabla \vu \tn{T}):\tnT - (\nabla \tvu \tnT) :\tn{T} \Big)\dx\dtau \\
& =\int_0^t\int_\Omega \Big((\nabla \tvu (\tn{T}-\tnT)):(\tn{T}-\tnT) + \big(\nabla(\vu-\tvu)\tn{T}\big):\tn{T} + \big(\nabla(\tvu-\vu)\tn{T}\big):\tnT \Big)\dx\dtau \\ 
& =\int_0^t\int_\Omega \Big((\nabla \tvu (\tn{T}-\tnT)):(\tn{T}-\tnT) + \big(\nabla(\vu-\tvu)\tn{T}\big):(\tn{T}-\tnT)\Big)\dx\dtau.
\end{aligned}
$$
Thus we have
\begin{equation}\label{T5}
\begin{aligned}
&\frac 12 \int_\Omega |\tn{T}-\tnT|^2(t,\cdot)\dx + \varepsilon \int_0^t \int_\Omega |\nabla (\tn{T}-\tnT)|^2 \dx\dtau + \frac 1{2\lambda} \int_0^t \int_\Omega |\tn{T}-\tnT|^2\dx\dtau \\
&\leq \int_0^t \int_\Omega (\eta-\teta)({\rm Tr}\, \tn{T} -{\rm Tr}\, \tnT) \dx\dtau \\
& +\int_0^t \int_\Omega \Big(-\frac 12 \Div \vu |\tn{T}-\tnT|^2  + \big((\tvu-\vu)\cdot \nabla (\tnT-\tn{T})\big):\tnT\Big)\dx\dtau\\
& +2\int_0^t\int_\Omega \Big((\nabla \tvu (\tn{T}-\tnT)):(\tn{T}-\tnT) + \big(\nabla(\vu-\tvu)\tn{T}\big):(\tn{T}-\tnT)\Big)\dx\dtau\\
&= \sum_{i=1}^5 \int_0^t \mathcal M_i(\tau)\dtau.
\end{aligned}
\end{equation}
We now estimate each term separately. We have
$$
\begin{aligned}
\mathcal M_1(\tau) &= \int_\Omega (\eta-\teta)({\rm Tr}\, \tn{T} -{\rm Tr}\, \tnT) (\tau,\cdot) \dx  \\
& \leq \|\eta-\teta\|_2(\tau) \|\tn{T}-\tnT\|_2(\tau) \leq   \mathcal E_2[\eta,\teta](\tau) + C \|\tn{T}-\tnT\|_2^2(\tau).
\end{aligned}
$$
Next we have
$$
\begin{aligned}
\mathcal M_2(\tau) &= \int_\Omega -\frac 12 \Div \vu |\tn{T}-\tnT|^2 (\tau,\cdot)\dx  \\
& \leq \|\Div \vu\|_\infty (\tau)\|\tn{T}-\tnT\|_2^2(\tau) \leq C \|\tn{T}-\tnT\|_2^2(\tau),
\end{aligned}
$$
as the velocity divergence is uniformly bounded. The next term already requires some better regularity of the strong solution,
$$
\begin{aligned}
\mathcal M_3(\tau) &= \int_\Omega \big((\tvu-\vu)\cdot \nabla (\tnT-\tn{T})\big):\tnT (\tau,\cdot) \dx\\
       & \leq \|\vu-\tvu\|_2(\tau) \|\nabla(\tn{T}-\tnT)\|_2(\tau) \|\tnT\|_\infty(\tau) \\
			&\leq \delta \|\nabla(\tn{T}-\tnT)\|_2^2(\tau) + C(T)\|\tnT\|_\infty^2(\tau) \mathcal E_1[(\vr,\vu),(\tvr,\tvu)](\tau),
			\end{aligned}
$$			
as the density is bounded away from zero for any fixed positive time. Next
$$
\begin{aligned}
\mathcal M_4(\tau) &= 2\int_\Omega \Big((\nabla \tvu (\tn{T}-\tnT)):(\tn{T}-\tnT) (\tau,\cdot) \dx  \\
& \leq 2\|\nabla \tvu\|_\infty(\tau) \|\tn{T}-\tnT\|_2^2(\tau).
\end{aligned}
$$
The last term, however, will require a certain higher integrability of the extra stress tensor $\tn{T}$. We first estimate the term $M_5$ assuming that the corresponding higher integrability of the extra stress tensor is available and then show that under certain assumptions on the initial condition for $\tn{T}$ this estimate is in fact available. 
We have ($a$ is a small positive number, sufficiently close to zero; in fat, $a<1$ is sufficient)
$$
\begin{aligned}
\mathcal M_5(\tau) &= 2\int_\Omega \big(\nabla(\vu-\tvu)\tn{T}\big):(\tn{T}-\tnT)(\tau,\cdot)\dx  \\
& \leq 2 \|\tn{T}\|_{3+a}(\tau) \|\nabla(\vu-\tvu)\|_2(\tau) \|\tn{T}-\tnT\|_{\frac{2(3+a)}{1+a}}(\tau) \\
& \leq 2 \|\tn{T}\|_{3+a}(\tau) \|\nabla(\vu-\tvu)\|_2(\tau) \|\tn{T}-\tnT\|_2^{\frac{a}{3+a}}(\tau)\|\tn{T}-\tnT\|_{1,2}^{\frac {3}{3+a}}(\tau)\\
& \leq \delta \|\nabla(\vu-\tvu)\|_2^2(\tau) + \delta \|\tn{T}-\tnT\|_{1,2}^2 (\tau)+ C(\delta) \|\tn{T}\|_{3+a}^{\frac{2(3+a)}{a}}(\tau) \|\tn{T}-\tnT\|_{2}^2(\tau).
\end{aligned}
$$
In order to conclude by summing up \eqref{5.14} and \eqref{T5} and employing estimates above, we need to know that $\|\tn{T}\|_{3+a}^{\frac{2(3+a)}{a}}$ is integrable over the time interval. We show now that, assuming $\|\tn{T}_0\|_{3+a}$ bounded and $a$ sufficiently small positive, then this term is in fact bounded.

To this aim we would like to use the second part of Lemma \ref{parabolic_Neumann} on equation \eqref{04a}. However, this is not possible to do directly, as we do not have most of the terms under divergence. We therefore first solve an auxiliary problem
$$
\Div (g_{ij}) = \Big[(\nabla \vu \tn{T} + \tn{T} \nabla^T \vu) + \frac{k}{2\lambda}\eta \tn{I} -\frac {1}{2\lambda}\tn{T}\Big]_{ij} - \int_\Omega \Big[(\nabla \vu \tn{T} + \tn{T} \nabla^T \vu) + \frac{k}{2\lambda}\eta \tn{I} -\frac {1}{2\lambda}\tn{T}\Big]_{ij} \dx
$$
in $\Omega$ with homogeneous Dirichlet boundary condition for $g_{ij}$, $i$, $j=1,2,3$. It is easy to see that the most restrictive term is the quadratic one and therefore get on almost every time level that
$$
\|\nabla(g_{ij})\|_q \leq C(\|\nabla \vu \tn{T}\|_q +1).
$$
Our correct choice of $q$ is $\frac{3(3+a)}{6+a}$ since then $W^{1,q}(\Omega) \hookrightarrow L^{3+a}(\Omega)$. We therefore get 
$$
\|\nabla \vu \tn{T}\|_{\frac{3(3+a)}{6+a}} \leq \|\nabla \vu\|_{\frac 52} \|\tn{T}\|_{\frac{15(3+a)}{12-a}}.
$$
Note that $\frac{15(3+a)}{12-a} \sim \frac{15}{4} <4$ for $a$ sufficiently small, therefore  the right-hand side above is surely integrable over the time interval $(0,T)$ in a power larger than 1. 

We may now apply the second part of Lemma \ref{parabolic_Neumann} (note that the term ${\rm Div}\, (\vu\tn{T})$ behaves exactly as $g_{ij}$) and we conclude that $\tn{T}\in L^\infty(0,T;L^{3+a}(\Omega))$. 

\begin{remark}
Note also that for $r$ sufficiently large ($r\geq 10$) we can get corresponding estimate without any further assumption on the higher integrability of the initial condition of the extra stress tensor. Recall that in fact, we need to estimate $\int_0^T\|\tn{T}\|_{3+a}^{\frac{2(3+a)}{a}} \dtau$. As can be seen from the above computations, it corresponds to the fact that
$$
\int_0^T \|\nabla \vu\tn{T}\|_{\frac{3(3+a)}{6+a}}^{\frac{2(3+a)}{a}} \dtau <\infty.
$$
Computing
$$
\|\nabla \vu\tn{T}\|_{\frac{3(3+a)}{6+a}}^{\frac{2(3+a)}{a}} \leq \|\tn{T}\|_{2}^{\frac{2(3+a)}{a}} \|\nabla \vu\|_{\frac{6(3+a)}{3-a}}^{\frac{2(3+a)}{a}},
$$
it is enough to check when $\frac{6(3+a)}{3-a} = \frac{2(3+a)}{a}$ which happens for $a=\frac 34$ and thus $r_{\rm min}=\frac{6(3+a)}{3-a}|_{a=\frac 34} = 10$. Indeed, there is a space to improve the result by treating the estimates more carefully, but we do not intend to do so since the improvement is rather marginal with respect to the main result. 
\end{remark}

We can now sum \eqref{5.14} and \eqref{T5} and conclude that under the assumptions of Theorem \ref{main_2} we get
\begin{equation} \label{5.15}
\begin{aligned}
&\mathcal E_1[(\vr,\vu),(\tvr,\tvu)](t) + \mathcal E_2[\eta,\teta](t) + \frac 12 \int_\Omega |\tn{T}-\tnT|^2(t,\cdot)\dx \\
&+ \frac 12 \int_0^t\int_\Omega (\Lambda (\Div\vu) - \Lambda (\Div \tvu) - \Lambda'(\Div\tvu) (\Div \vu-\Div \tvu))\dx\dtau \\
& + \frac 12 \int_0^t \int_\Omega 2\mu_0\Big[(1+ |\tn{D}^d(\vu)|^2)^{\frac{r-2}{2}} \tn{D}^d(\vu)- (1+ |\tn{D}^d(\tvu)|^2)^{\frac{r-2}{2}}\tn{D}^d(\tvu)\Big]:(\tn{D}^d(\vu)-\tn{D}^d(\tvu)) \dx\dtau \\
& + \varepsilon \int_0^t \int_\Omega \Big(2kL \big|\nabla (\eta^{\frac 12}-\teta^{\frac 12})\big|^2 + \zeta |\nabla (\eta-\teta)|^2\Big)\dx\dtau + \varepsilon \int_0^t \int_\Omega |\tn{T} -\tnT|^2 \dx\dtau \\
& \int_0^t F_1(\tau) \big(\mathcal E_1[(\vr,\vu),(\tvr,\tvu)](\tau) + \mathcal E_2[\eta,\teta](\tau) + \|\tn{T}-\tnT\|_2^2(\tau) \big) \dtau,
\end{aligned}
\end{equation}
where $F_1(\tau)$ is integrable over $(0,T)$ which concludes the proof of Theorem \ref{main_2}.

\bigskip

{\bf Acknowledgment:} 
The work of Y.L. was partially supported... .The workj of M.P. was partially supported by the Czech Science Foundation (GA\v{C}R), project No. 25-16592S.

\end{document}